 
\documentclass[a4paper,10pt]{article}
 
\usepackage{ae,lmodern} 
\usepackage[utf8]{inputenc}
\usepackage[T1]{fontenc}
\usepackage[english]{babel}
\usepackage{geometry}
\usepackage{url,xspace}
\usepackage{amsmath,amssymb,stmaryrd,spalign,bbm,mathtools,mathrsfs,dsfont,thmtools}
\usepackage[thmmarks,amsmath,framed,thref,hyperref]{ntheorem}
\usepackage{comment}
\usepackage[colorlinks=true,citecolor=teal,linkcolor=blue]{hyperref}
\usepackage{array}
\usepackage{enumitem}
\usepackage{changepage}
\usepackage{euscript}
\usepackage{cite}
\usepackage{graphicx}
\usepackage{pgfplots}
\usepackage[toc,page]{appendix}
\usepackage{etoolbox}

\counterwithin{equation}{section}

\setlist[enumerate,1]{label={(\roman*)}}

\pgfplotsset{width=10cm,compat=1.9}

\mathsurround=1pt

\newcommand{\zerodisplayskips}{%
  \setlength{\abovedisplayskip}{5pt}%
  \setlength{\belowdisplayskip}{5pt}%
  \setlength{\abovedisplayshortskip}{5pt}%
  \setlength{\belowdisplayshortskip}{5pt}}
\appto{\normalsize}{\zerodisplayskips}
\appto{\small}{\zerodisplayskips}
\appto{\footnotesize}{\zerodisplayskips}

\setlength{\oddsidemargin}{0in}
\setlength{\evensidemargin}{0in}
\setlength{\textwidth}{6in}
\setlength{\textheight}{9in}
\setlength{\topmargin}{-0.50in}

\setcounter{secnumdepth}{3}
\setcounter{tocdepth}{2}

\newtheorem{theorem}{Theorem}[section]
    \newtheorem{corollary}[theorem]{Corollary}
    \newtheorem{lemma}[theorem]{Lemma}
    \newtheorem{proposition}[theorem]{Proposition}

    \newtheorem{assumption}[theorem]{Assumption}
    
    \theorembodyfont{\upshape}
    \newtheorem{definition}[theorem]{Definition}
    
    \newtheorem{remark}[theorem]{Remark}
    
\counterwithin{equation}{section}

\setlist[enumerate,1]{label={(\roman*)}}

\theoremstyle{nonumberplain}
\theoremheaderfont{\itshape}
\theorembodyfont{\normalfont}
\theoremseparator{.\,—}
\theoremsymbol{}
\newtheorem{proof-wo}{Proof}
\theoremsymbol{\ensuremath{\blacksquare}}
\newtheorem{proof}{Proof}



\newcommand{\eus}{\EuScript}

\newcommand{\llb}{\llbracket}


\title{Homogeneous Sobolev global-in-time maximal regularity and related trace estimates \thanks{MSC 2020: 46E40, 47B12, 47D06.}\thanks{Key words: Maximal regularity, Vector-valued homogeneous Sobolev spaces, Vector-valued traces, Non-complete spaces}}
\author{Anatole \textsc{Gaudin}\thanks{{Aix-Marseille Université, CNRS, Centrale Marseille, I2M, Marseille, France} - \textbf{email:} anatole.gaudin@univ-amu.fr}}

\begin{document}

\maketitle
\begin{abstract} In this paper, we prove global-in-time $\dot{\mathrm{H}}^{\alpha,q}$-maximal regularity for a class of injective, but not invertible, sectorial operators on a UMD Banach space $X$, provided $q\in(1,+\infty)$, $\alpha\in(-1+1/q,1/q)$. We also prove the corresponding trace estimate, so that the solution to the canonical abstract Cauchy problem is continuous with values in a not necessarily complete trace space.

In order to put our result in perspective, we also provide a short review on $\mathrm{L}^q$-maximal regularity which includes some recent advances such as the revisited homogeneous operator and interpolation theory by Danchin, Hieber, Mucha and Tolksdorf. This theory will be used to build the appropriate trace space, from which we want to choose the initial data, and the solution of our abstract Cauchy problem to fall in. 
\end{abstract}

\addtocontents{toc}{\protect\thispagestyle{empty}}
\tableofcontents

\renewcommand{\arraystretch}{1.5}



\section{Introduction}

\subsection{Motivations and interests}

\subsubsection{The example of the Laplacian on \texorpdfstring{$\mathbb{R}^n$}{Rn}}
The  $\mathrm{L}^q$-maximal regularity is very powerful and fundamental tool for the study of a wide range parabolic partial differential equations, that comes mainly from physics, geometry or chemistry.

The usual theory built for sectorial operators on UMD Banach have been widely investigated. However, when it comes to look where the solution of the abstract Cauchy problem lies as a continuous function of time, we have to restrict ourselves either to control in finite time, or to ask the operator to be invertible.

For instance, let us take a look at the scalar heat equation on $\mathbb{R}^n$, for $T\in(0,+\infty]$, $u_0\in\eus{S}'(\mathbb{R}^n)$, $f\in\mathrm{L}^1_{\text{loc}}([0,T],\eus{S}'(\mathbb{R}^n))$,
\begin{align}\tag{HE}\label{HE}
    \left\{\begin{array}{rl}
            \partial_t u(t) -\Delta u(t)  =& f(t) \,\text{, } 0<t<T \text{, }\\
            u(0) =& u_0\text{. }
    \end{array}
    \right.\text{. }
\end{align}
The standard theory, see e.g. \cite[Remark~4.10.9]{AmmanBookParabolicVolI1995}, \cite[Theorem~3.5.5]{PrussSimonett2016}, tells us that provided $f\in \mathrm{L}^q((0,T),\mathrm{L}^p(\mathbb{R}^n))$, $u_0\in \mathrm{B}^{2-2/q}_{p,q}(\mathbb{R}^n)$, $T<+\infty$, $p,q\in(1,+\infty)$, the Cauchy problem \eqref{HE} admits a unique solution $u\in \mathrm{H}^{1,q}((0,T),\mathrm{L}^p(\mathbb{R}^n))\cap \mathrm{L}^{q}((0,T),\mathrm{H}^{2,p}(\mathbb{R}^n))\subset \mathrm{C}^0([0,T],\mathrm{B}^{2-2/q}_{p,q}(\mathbb{R}^n))$
which satisfies the estimates
\begin{align*}
    \lVert u\rVert_{\mathrm{L}^{\infty}([0,T],\mathrm{B}^{2-2/q}_{p,q})}\lesssim_{n,p,q,T} \lVert(u , \partial_t u, \Delta u)\rVert_{\mathrm{L}^{q}((0,T),\mathrm{L}^{p})} \lesssim_{n,p,q,T}  \lVert f \rVert_{\mathrm{L}^{q}((0,T),\mathrm{L}^{p})} + \lVert u_0\rVert_{\mathrm{B}^{2-2/q}_{p,q}} \text{.}
\end{align*}
In these estimates, implicit constants are dependent of $T$ and blow up as $T$ goes to infinity. It is in fact even worse than that : one cannot expect a global-in-time estimate of this type. Indeed, such a control on the term $\lVert u \rVert_{\mathrm{L}^{q}([0,+\infty),\mathrm{L}^{p})}$ would imply that the Laplacian $\Delta$ is invertible on $\mathrm{L}^p(\mathbb{R}^n)$, see for instance \cite[Section~2]{CoulhonLamberton1986} or  \cite[Corollary~3.5.3]{PrussSimonett2016}, which is known to be false.

However, when $f\in \mathrm{L}^q(\mathbb{R}_+,\mathrm{L}^p(\mathbb{R}^n))$, $u_0\in \dot{\mathrm{B}}^{2-2/q}_{p,q}(\mathbb{R}^n)$, there is still a unique solution $u$ to \eqref{HE} such that $\partial_t u,\Delta u \in \mathrm{L}^q(\mathbb{R}_+,\mathrm{L}^p(\mathbb{R}^n))$ and $u\in \mathrm{C}^0_b(\mathbb{R}_+,\dot{\mathrm{B}}^{2-2/q}_{p,q}(\mathbb{R}^n))$ with the global-in-time estimate
\begin{align}\label{eq:GiTLqLpMRestHeatEq}
    \lVert u\rVert_{\mathrm{L}^{\infty}(\mathbb{R}_+,\dot{\mathrm{B}}^{2-2/q}_{p,q})}\lesssim_{n,p,q} \lVert(\partial_t u, \Delta u)\rVert_{\mathrm{L}^{q}(\mathbb{R}_+,\mathrm{L}^{p})} \lesssim_{n,p,q}  \lVert f \rVert_{\mathrm{L}^{q}(\mathbb{R}_+,\mathrm{L}^{p})} + \lVert u_0\rVert_{\dot{\mathrm{B}}^{2-2/q}_{p,q}} \text{.}
\end{align}
This result is well known, but while the right hand side estimate of \eqref{eq:GiTLqLpMRestHeatEq} arises from the usual theory when $u_0=0$, see e.g. \cite[Proposition~8.3.4,~Corollary~9.3.12]{bookHaase2006}, this is however not the case for the left hand side \textbf{trace estimate} and to obtain the space from which we choose the initial data $u_0$, see for instance \cite[Theorem~2.34]{bookBahouriCheminDanchin}. A reason is that the usual theory for traces in maximal regularity will only produce an inhomogeneous Besov space, which is not suitable: it makes us lose again the uniform control with respect to time on the left hand side part of \eqref{eq:GiTLqLpMRestHeatEq}. The same kind of issue would happen for other injective, but non-invertible, sectorial operators. We further hope we have convinced the reader that the general theory cannot be applied for global-in-time estimate for the very well-known Cauchy problem \eqref{HE} which is a sufficiently important issue.

\subsubsection{On the choice of function spaces.}

When it comes to the study of actual partial differential equations, it would be interesting to play with integrability, decay-in-time, or even with some Sobolev regularity in-time of possible solutions for the linear part of the problem. A wide development of the theory of power-weighted fractional Sobolev-in-time maximal regularity is made and applied, and can be found in \cite[Sections~3.2,~3.4~\&~3.5]{PrussSimonett2016}. Pr\"{u}ss and Simonett gave the complete construction of maximal regularity results for spaces of the type $\mathrm{H}^{\alpha,q}_{\mu,0}(\mathbb{R}_+,X)$, $\alpha \in [0,1]$, $\mu\in(1/q,1]$, which stands for the space of measurable functions $u$ such that
\begin{align*}
    t\mapsto t^{1-\mu}u(t) \in \mathrm{H}^{\alpha,q}_{1,0}(\mathbb{R}_+,X)\text{.}
\end{align*}
Here, $\mathrm{H}^{\alpha,q}_{1,0}$ coincides with the standard Sobolev space with zero boundary condition. The applications to general quasilinear parabolic partial differential equations of the $\mathrm{L}^q_{\mu}$-maximal regularity have also been extensively reviewed in \cite{KhonePrussWilke2010,LeCronePrussWilke2010}.

We also mention \cite{Pruss2002} which contains a treatment of fractional Sobolev in-time maximal regularity theory as well as a review of mixed derivative estimates. It was recently used, for instance, in \cite{BrandoleseMonniaux2021} for the study of the Boussinesq system where mixed derivative estimates were their main tool in combination with the usual $\mathrm{L}^q$-maximal regularity setting. 

However, both of previous treatments do not allow, again, global-in-time estimates for injective, but not invertible, operators such as the Laplacian on $\mathbb{R}^n$. This is where our idea  comes from in order to keep the possibility of playing the Sobolev in-time regularity: we want to show global-in-time homogeneous \textbf{$\dot{\mathrm{H}}^{\alpha,q}$-maximal regularity} for non-zero initial data, with a trace estimate similar to the one in \eqref{eq:GiTLqLpMRestHeatEq}, $q\in(1,+\infty)$, $\alpha\in(-1+1/q,1/q)$.

Danchin, Hieber, Mucha and Tolksdorf, in \cite[Chapter~2]{DanchinHieberMuchaTolk2020}, provide global-in-time estimates for injective, but not invertible, operators in the framework of the Da Prato-Grisvard $\mathrm{L}^q$-maximal regularity. Let us motivate here their idea of homogeneous interpolation and operator theory for injective sectorial operators from an other point of view. Indeed, in the previous example for the heat equation, if we set $X_p=\mathrm{L}^p$ $A=-\Delta$, $\mathrm{D}_p(A)=\mathrm{H}^{2,p}$, the Besov space used as trace space is given by the real interpolation space
\begin{align*}
    \mathrm{B}^{2-2/q}_{p,q} = (X_p, \mathrm{D}_p(A))_{1-\frac{1}{q},q}\text{,}
\end{align*}
and this follows from the general trace theory. See e.g. \cite[Section~1.2]{bookLunardiInterpTheory}, \cite[Section~3.4]{PrussSimonett2016} or \cite[Section~4]{MeyriesVeraar2014} for even fancier and more general function spaces.

Our idea is to say that the homogeneous Besov space yielding the homogeneous estimate \eqref{eq:GiTLqLpMRestHeatEq} would be given by
\begin{align*}
    \dot{\mathrm{B}}^{2-2/q}_{p,q} = (X_p, \mathrm{D}_p(\mathring{A}))_{1-\frac{1}{q},q}\text{,}
\end{align*}
where, here, $\mathrm{D}_p(\mathring{A})=\dot{\mathrm{H}}^{2,p}$ which is also, at least morally, the closure of $\mathrm{D}_p({A})$ under the (semi-) norm $\lVert A \cdot\rVert_{X_p}\sim_{p,n}\lVert \nabla^2 \cdot\rVert_{\mathrm{L}^p}$. And this is exactly the kind of construction achieved in \cite[Chapter~2]{DanchinHieberMuchaTolk2020} for abstract sectorial operators, in order to obtain a global-in-time Da Prato-Grisvard $\mathrm{L}^q$-maximal regularity theorem. Moreover, such a construction avoids the need of completeness for $\mathrm{D}(\mathring{A})$ which is fundamental in the scope of the treatment of some non-linear partial differential equations with global-in-time estimates. Indeed, realization of homogeneous functions spaces that are usually employed are not necessarily complete on their whole scale, see for instance \cite{bookBahouriCheminDanchin,DanchinMucha2009,DanchinMucha2015,DanchinHieberMuchaTolk2020,Gaudin2022} and the references therein. We notice that the possible lack of completeness of $\mathrm{D}(\mathring{A})$ implies that the resulting real interpolation space $(X, \mathrm{D}(\mathring{A}))_{\theta,q}$ is not necessarily complete either, but this is somewhat mandatory to deal with actual non-linear or boundary value problems.

This is very important to emphasize the fact that in concrete applications involving non-linearities or boundary values, one cannot avoid the completeness issue taking the completion instead. For instance, if we work with the completion of $\dot{\mathrm{H}}^{2,p}$, $p\geqslant n/2$, one might end up with elements that are no longer even distributions which is really inconvenient. If instead one chooses the realization up to polynomials, then one ends up with products rules that depends on the choice of representation: it is not clear how to choose properly the polynomial part in a canonical way. Moreover, point-wise composition with a global diffeomorphism, or bi-Lipschitz map, as done in \cite[Chapters~5~\&~7]{DanchinHieberMuchaTolk2020}, would be meaningless if one works with the setting "up to polynomials".

Those issues concerning the completion also prevent the use of \textit{standard} homogeneous operator and interpolation theory started in \cite[Chapter~6,~Sections~6.3~\&~6.4]{bookHaase2006}, then extended in \cite{HaakHaaseKunstmann2006}, requiring in the end to work with $\mathrm{D}(\mathring{A})$ as a complete space.

We notice that the recent work \cite{AgrestiLindemulderVeraar2022} does not apply in our setting to obtain the desired trace estimate. There are two reasons: $\mathrm{D}(\mathring{A})$ is not an actual completion, and their work do not take in consideration homogeneous fractional Sobolev scale for the time variable.

\subsection{Notations, definitions}

For $X$ a Banach space, $(\Omega,\mu)$ a sigma finite measure space, and $p\in[1,+\infty]$, $\mathrm{L}^p(\Omega,\mu,X)$ stands for the space of (Bochner-)measurable functions $u\,:\,\Omega\longrightarrow X$, such that $t\mapsto\lVert u(t)\rVert_X \in \mathrm{L}^p(\Omega,\mu,\mathbb{R})$.

A Banach space $X$ is said to have the \textbf{Unconditionnal Martingale Difference} property (or to be \textbf{UMD}) if the Hilbert transform is bounded on $\mathrm{L}^q(\mathbb{R},X)$ for one (or equivalently all) $q\in(1,+\infty)$.

For two real numbers $A,B\in\mathbb{R}$, $A\lesssim_{a,b,c} B$ means that there exists a constant $C>0$ depending on ${a,b,c}$ such that $A\leqslant C B$. When $A\lesssim_{a,b,c} B$ and $B \lesssim_{a,b,c} A$ are true, we simply write $A\sim_{a,b,c} B$.

\subsubsection{Sectorial operators on Banach spaces}

We introduce the following subsets of the complex plane
\begin{align*}
    \Sigma_\mu &:=\{ \,z\in\mathbb{C}^\ast\,:\,\lvert\mathrm{arg}(z)\rvert<\mu\,\}\text{, if } \mu\in(0,\pi)\text{, }
\end{align*}
we also define $\Sigma_0 := (0,+\infty)$, and we are going to consider the closure $\overline{\Sigma}_\mu$.

An operator $(\mathrm{D}(A),A)$ on complex valued Banach space $X$ is said to be $\omega$-\textit{\textbf{sectorial}}, if for a fixed $\omega\in (0,\pi)$, both conditions are satisfied \textit{
\begin{enumerate}
    \item  \textit{$\sigma(A)\subset \overline{\Sigma}_\omega $, where $\sigma(A)$ stands for the spectrum of $A$ };
    \item \textit{for all $\mu\in(\omega,\pi)$, $\sup_{\lambda\in \mathbb{C}\setminus\overline{\Sigma}_\mu}\lVert \lambda(\lambda \mathrm{I}-A)^{-1}\rVert_{X\rightarrow X} < +\infty$.}
\end{enumerate}
}

For $(\mathrm{D}(A),A)$ injective and $\omega$-sectorial with $\omega\in[0,\pi)$, we say that $A$ has  {\textbf{bounded imaginary powers}} (BIP) of type $\theta_A\geqslant 0$ if for all $x\in \mathrm{D}(A)\cap\mathrm{R}(A)$, for $f(z)=z^{is}$, 
\begin{align*}\label{eq:DunfordintegralFuncCalc}
    f(A)x := \frac{1}{2 i \pi}\int_{\partial \Sigma_\theta} f(z)(z\mathrm{I}-A)^{-1}x\,\mathrm{d}z\text{, }
\end{align*}
for some $\theta \in(\omega,\pi)$, with $\partial \Sigma_\theta$ oriented counterclockwise, yields a bounded linear operator for all $s\in\mathbb{R}$, and
\begin{align*}
    \theta_A := \inf \left\{ \nu\geqslant 0\,\bigg|\, \sup_{s\in\mathbb{R}} e^{-\nu|s|}\lVert A^{is}\rVert_{X \rightarrow X}<+\infty\right\}\text{ . }
\end{align*}

The functional calculus of sectorial operators is widely reviewed in several references but we mention here Haase's book \cite{bookHaase2006}. For a treatment of operator theory in the scope of $\mathrm{L}^q$-maximal regularity, such as BIP, we refer to \cite[Chapters~1~\&~2]{bookDenkHieberPruss2003} and \cite[Chapters~3~\&~4]{PrussSimonett2016}.

\subsection{Road map}

In Section~3: we provide a short construction of the homogeneous Sobolev spaces we need.
In order to achieve this, we will need to assume that the Banach space $X$ is still have the UMD property. This is to ensure that we have a suitable definition of $\dot{\mathrm{H}}^{\alpha,q}(\mathbb{R}_+,X)$, since we will need some complex interpolation theory requiring bounded imaginary powers for the time derivative, see e.g. \cite[Theorems~6.7~\&~6.8]{LindemulderMeyriesVeerar2018}. 

Before that, in Section~2, we give a review of the current state of standard $\mathrm{L}^q$-maximal regularity with global-in-time estimates: the treatment will be made first on UMD Banach spaces $X$. A second part is dedicated to a review of the homogeneous operator and interpolation theory revisited by Danchin, Hieber, Mucha and Tolksdorf, with its application to Da Prato-Girsvard $\mathrm{L}^q$-maximal regularity.

Section~4 is devoted to our main result about $\dot{\mathrm{H}}^{\alpha,q}$-maximal regularity for some injective sectorial operators, with trace estimate in the possibly non-complete space $(X, \mathrm{D}(\mathring{A}))_{1+\alpha-\frac{1}{q},q}$. Before proving the main result Theorem \ref{thm:LqMaxRegUMDHomogeneous}, one has to prove that the quantities involved to solve the Cauchy problem are in fact well-defined, which is the goal of the preceding subparts.

\subsubsection*{Acknowledgment} The author would like to thank Sylvie Monniaux and Pascal Auscher for their useful remarks during earlier presentations of the current work. The author would also like to thank Bernhard H. Haak for pointing out the article \cite{HaakHaaseKunstmann2006}.


\section{Short state of the art for \texorpdfstring{$\mathrm{L}^q$}{Lq}-maximal regularity}\label{MaxRegReviewIntro}

We are going to recall here few facts about $\mathrm{L}^q$-maximal regularity ($q\in(1,+\infty)$) on UMD Banach spaces. We will also deal with the $\mathrm{L}^q$-maximal regularity provided by the Da Prato-Grisvard theory in both versions: inhomogeneous and homogeneous, both allowing under appropriate circumstance $q=1,+\infty$, allowing also to get rid of the UMD property on $X$.

\subsection{Review for the usual \texorpdfstring{$\mathrm{L}^q$}{Lq}-maximal regularity}

First, let us consider $(\mathrm{D}(A),A)$ a densely defined closed operator on a Banach space $X$. It is known, see \cite[Theorem~3.7.11]{ArendtBattyHieberNeubranker2011}, that the two following assertions are equivalent:\textit{
\begin{enumerate}
    \item $A$ is $\omega$-sectorial on $X$, with $\omega\in [0,\tfrac{\pi}{2})$;
    \item $-A$ generates a bounded holomorphic $\mathrm{C}_0$-semigroup on $X$, denoted by $(e^{-tA})_{t\geqslant0}$.
\end{enumerate}}

Thus, provided that $A$ is $\omega$-sectorial on $X$ for some $\omega\in [0,\tfrac{\pi}{2})$, for $T\in(0,+\infty]$, we look at the following abstract Cauchy problem, 
\begin{align}\tag{ACP}\label{ACP}
    \left\{\begin{array}{rl}
            \partial_t u(t) +Au(t)  =& f(t) \,\text{, } 0<t<T \text{, }\\
            u(0) =& u_0\text{. }
    \end{array}
    \right.\text{, }
\end{align}
where $f\in \mathrm{L}^1_{\mathrm{loc}}([0,T),X)$, $u_0 \in Y$, $Y$ being some normed vector space depending on $X$ and $\mathrm{D}(A)$.

And it turns out, see \cite[Proposition~3.1.16]{ArendtBattyHieberNeubranker2011}, that in our case for $u_0\in X$, $f\in \mathrm{L}^1((0,T),X)$, integral solutions $u\in \mathrm{C}^0([0,T],X)$ for \eqref{ACP} is unique, also called the \textbf{mild solution} of \eqref{ACP} and given by
\begin{align*}
    u(t)= e^{-tA}u_0 + \int_{0}^{t} e^{-(t-s)A}f(s)\,{\mathrm{d}s} \text{, } \quad 0\leqslant t<T\text{.}
\end{align*}

The question is: for a given $q\in [1,+\infty]$, can we find an appropriate space $Y$ (depending on $X$, $\mathrm{D}(A)$ and possibly $q$), such that if $u_0\in Y$ and $f\in \mathrm{L}^q((0,T),X)$, then \eqref{ACP} admits a unique solution $u$, satisfying $\partial_t u$, $Au\in\mathrm{L}^q((0,T),X)$, with norm control
\begin{align*}
    \lVert (\partial_t u, Au)\rVert_{\mathrm{L}^q((0,T),X)} \lesssim_{q,A} \lVert f\rVert_{\mathrm{L}^q((0,T),X)} + \lVert u_0\rVert_{Y} \text{ ? } 
\end{align*}

The problem \eqref{ACP} being linear, we introduce two related subproblems:
\begin{itemize}
    \item (ACP${}^0$) stands for \eqref{ACP} with $f=0$,
    \item (ACP${}_0$) stands for \eqref{ACP} with $u_0=0$,
\end{itemize}
recalling that according to basic $\mathrm{C}_0$-semigroup theory, $u=0$ is the unique solution of (ACP${}^0_0$). Hence, if \eqref{ACP} admits a solution, such solution is unique due to linearity so that it suffices to treat separately both problem (ACP${}^0$) and (ACP${}_0$).

$\bullet$ \, \textbf{For the }(ACP${}^0$)\textbf{ problem}, we introduce two quantities for $v\in X + \mathrm{D}(A)$,

\begin{align*}
    \lVert v\rVert_{\mathring{\eus{D}}_{A}(\theta,q)} := \left( \int_{0}^{+\infty} (t^{1-\theta}\lVert Ae^{-tA} v \rVert_{X})^q \frac{\mathrm{d}t}{t}\right)^\frac{1}{q}\text{, and }  \lVert v\rVert_{{\eus{D}}_{A}(\theta,q)} := \lVert v \rVert_X + \lVert v\rVert_{\mathring{\eus{D}}_{A}(\theta,q)}\text{, }
\end{align*}
where $\theta \in(0,1)$, $q\in[1,+\infty]$. This leads to the construction of the vector space
\begin{align*}
    {\eus{D}}_{A}(\theta,q) := \{ v\in X\,|\, \lVert v\rVert_{\mathring{\eus{D}}_{A}(\theta,q)}<+\infty \} \text{.}
\end{align*}
The vector space ${\eus{D}}_{A}(\theta,q)$ is known to be a Banach space under the norm $\lVert \cdot\rVert_{{\eus{D}}_{A}(\theta,q)}$ and moreover it satisfies the following equality with equivalence of norms
\begin{align}\label{eq:InterpolationXDomainAIntro}
    {\eus{D}}_{A}(\theta,q) = (X,\mathrm{D}(A))_{\theta,q}\text{, }
\end{align}
see \cite[Theorem~6.2.9]{bookHaase2006}. If moreover, $0\in \rho(A)$ it has been proved, \cite[Corollary~6.5.5]{bookHaase2006}, that $\lVert \cdot\rVert_{\mathring{\eus{D}}_{A}(\theta,q)}$ and $\lVert \cdot\rVert_{{\eus{D}}_{A}(\theta,q)}$ are two equivalent norms on ${\eus{D}}_{A}(\theta,q)$.

By definition, for all $u_0\in{\eus{D}}_{A}(1-{1}/{q},q)$, for $t\mapsto u(t)=e^{-tA}u_0$ the solution of (ACP${}^0$), we have
\begin{align*}
    \lVert u \rVert_{\mathrm{L}^{\infty}(\mathbb{R}_+,\mathring{\eus{D}}_{A}(1-{1}/{q},q))} \lesssim_{q,A} \lVert \partial_t u\rVert_{\mathrm{L}^q(\mathbb{R}_+,X)}=\lVert Au\rVert_{\mathrm{L}^q(\mathbb{R}_+,X)} = \lVert u_0\rVert_{\mathring{\eus{D}}_{A}(1-{1}/{q},q)}\text{, }
\end{align*}
and we also have, for all $T>0$,
\begin{align*}
    \lVert u \rVert_{\mathrm{L}^{\infty}(\mathbb{R}_+,{\eus{D}}_{A}(1-{1}/{q},q))} \lesssim_{q,A} \lVert u_0\rVert_{{\eus{D}}_{A}(1-{1}/{q},q)}\text{ and  }  \lVert u \rVert_{\mathrm{L}^q((0,T),X)} \lesssim_{q,A} T^{\tfrac{1}{q}} \lVert u_0\rVert_X \text{. }
\end{align*}
If moreover, $0\in \rho(A)$, then
\begin{align*}
    \lVert u \rVert_{\mathrm{L}^q(\mathbb{R}_+,X)} \lesssim_{q,A} \lVert u_0\rVert_X \text{. }
\end{align*}

$\bullet$ \, \textbf{For the }(ACP${}_0$)\textbf{ problem}, the question is much more delicate. In fact, the solution $u$ to (ACP${}_0$) is formally given by the Duhamel formula
\begin{align}\label{eqmaxregsolutionACP0}
    u(t) = \int_{0}^{t} e^{-(t-s)A}f(s)\,\mathrm{d}s\text{, } t>0 \text{, }  
\end{align}
and since, $\partial_t u = -Au +f$, it suffices to know whether
\begin{align}\label{eqmaxregestimateACP0}
    \lVert  Au\rVert_{\mathrm{L}^q(\mathbb{R}_+,X)} \lesssim_{q,A} \lVert f\rVert_{\mathrm{L}^q(\mathbb{R}_+,X)} \text{.}
\end{align}

This leads to the following definition:
\begin{definition}\label{def:LqMaxReg} The operator $A$ is said to have the \textbf{$\mathrm{\mathbf{L}}^{q}$-maximal regularity} property on $X$ if the solution $u$ given by \eqref{eqmaxregsolutionACP0} satisfies the above estimate \eqref{eqmaxregestimateACP0}.
\end{definition}

Let us remark that the case of finite time $T>0$ with the corresponding estimate can be easily deduced by \eqref{eqmaxregestimateACP0} applied to $\Tilde{f}$, the extension of $f$ to $\mathbb{R}_+$ by $0$, and the uniqueness of (ACP${}_0$).

It has been proved by Coulhon and Lamberton \cite{CoulhonLamberton1986}, that the property of the $\mathrm{L}^q$-maximal regularity does not depends on $q\in(1,\infty)$. See also \cite{deSimon64} for the first version of this result in the hilbertian-valued case.

Coulhon and Lamberton also showed, see \cite[Theorem~5.1]{CoulhonLamberton1986}, that the UMD property is a necessary condition for the Poisson semigroup to have the $\mathrm{L}^q$-maximal regularity property. The canonical example, provided $p\in(1,+\infty)$, is that $X=\mathrm{L}^p(\Omega)$ is a UMD space and so are its closed subspaces, see for instance  \cite[Propositions~4.2.15~\&~4.2.17]{HytonenNeervenVeraarWeisbookVolI2016}.

The following fact proved by Kalton and Lancien \cite{KaltonLancien2000}: for each non-hilbertian Banach lattice, there exists a sectorial operator such that \eqref{eqmaxregestimateACP0} fails.

However, for UMD Banach spaces, a full and definitive characterization of operators that satisfy $\mathrm{L}^q$-maximal regularity property has been proved by Weis {\cite[Theorem~4.2]{Weis2001}}. One may also check \cite[Theorem~1.11]{KunstmannWeis2004}, \cite[Theorem~4.4]{bookDenkHieberPruss2003} for other proofs and more details about $\mathcal{R}$-boundedness and its equivalence with $\mathrm{L}^q$-maximal regularity for sectorial operators on a UMD Banach space.

In practice, we rather use other results such has the Dore-Venni Theorem, \cite[Theorem~2.1]{DoreVenni1987}, which asserts that the boundedness of imaginary powers of $A$ with type $\theta_A<\frac{\pi}{2}$ is a sufficient condition to recover $\mathrm{L}^q$-maximal regularity for $q\in(1,+\infty)$. We mention \cite[Corollary~9.3.12]{bookHaase2006} for the same result that does not require invertibility of $A$.
In particular, the bounded holomorphic functional calculus of $A$ is a sufficient condition to recover $\mathrm{L}^q$-maximal regularity with $q\in(1,+\infty)$.

We may combine all results for (ACP${}^0$) and (ACP${}_0$) to state the following well-known $\mathrm{L}^q$-maximal regularity theorem, where we only state it with the sufficient condition of BIP for convenience.

\begin{theorem}\label{thm:LqMaxRegUMD}Let $\omega\in [0,\frac{\pi}{2})$, $(\mathrm{D}(A),A)$ an $\omega$-sectorial operator on a UMD Banach space $X$. Assume that $A$ has BIP of type $\theta_A<\frac{\pi}{2}$.

Let $q\in(1,+\infty)$ and $T\in(0,+\infty]$. For $f\in\mathrm{L}^q((0,T),X)$, $u_0\in \eus{D}_{A}(1-{1}/{q},q)$, the problem \eqref{ACP} admits a unique solution $u$ such that $\partial_t u$, $Au \in \mathrm{L}^q((0,T),X)$ with estimate
\begin{align}\label{BoundLqMaxReg}
    \lVert (\partial_t u, Au)\rVert_{\mathrm{L}^q((0,T),X)} \lesssim_{A,q} \lVert f\rVert_{\mathrm{L}^q((0,T),X)} + \lVert u_0\rVert_{\mathring{\eus{D}}_{A}(1-{1}/{q},q)}\text{. }
\end{align}
In addition, for all $T_\ast\leqslant T$, $T_\ast<+\infty$, we have $u\in \mathrm{C}^0([0,T_\ast),{\eus{D}}_{A}(1-{1}/{q},q))\cap \mathrm{L}^q((0,T_\ast),X)$ with estimates
\begin{align}
    \lVert u \rVert_{\mathrm{L}^\infty([0,T_\ast],{\eus{D}}_{A}(1-{1}/{q},q))} &\lesssim_{A,q,T_\ast} \lVert f\rVert_{\mathrm{L}^q((0,T_\ast),X)} + \lVert u_0\rVert_{{\eus{D}}_{A}(1-{1}/{q},q)}\text{, } \label{ContinuityLqMaxReg} \\
    \lVert u\rVert_{\mathrm{L}^q((0,T_\ast),X)} &\lesssim_{A,q} (T_\ast\lVert f\rVert_{\mathrm{L}^q((0,T_\ast),X)} + T_\ast^{\frac{1}{q}}\lVert u_0\rVert_{X})\text{. } \label{LqTBoundLqMaxReg}
\end{align}
If moreover $0\in\rho(A)$, we have
\begin{align}
    \lVert u\rVert_{\mathrm{L}^q((0,T),X)} &\lesssim_{A,q} \lVert f\rVert_{\mathrm{L}^q((0,T),X)} + \lVert u_0\rVert_{X} \text{. }\label{LqBoundLqMaxReg}
\end{align}
so that \eqref{ContinuityLqMaxReg} holds with uniform constant with respect to $T_\ast$, hence remains true for $T_\ast=+\infty$.
\end{theorem}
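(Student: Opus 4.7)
By linearity, the plan is to split \eqref{ACP} into the two subproblems (ACP${}^0$) (with $f=0$) and (ACP${}_0$) (with $u_0=0$), solve them separately, and sum the solutions. Uniqueness follows from the fact that any strong solution (with $\partial_t u, Au \in \mathrm{L}^q$) coincides with the corresponding mild solution, together with the fact that (ACP${}^0_0$) admits only $u\equiv 0$ by $\mathrm{C}_0$-semigroup theory. For (ACP${}^0$), the mild solution is $u_1(t) = e^{-tA} u_0$, and the required estimates are already recorded in the paragraph preceding the theorem: the identity $\lVert A u_1\rVert_{\mathrm{L}^q(\mathbb{R}_+, X)} = \lVert u_0\rVert_{\mathring{\eus{D}}_A(1-1/q, q)}$ is built into the definition of that semi-norm, while continuity of $u_1$ into $\eus{D}_A(1-1/q, q)$ follows from strong continuity of the semigroup together with its commutation with $A$ on the real interpolation space.

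For (ACP${}_0$), I would invoke the structural hypothesis on $A$: since $A$ has BIP of type $\theta_A < \pi/2$ on the UMD space $X$, it enjoys the $\mathrm{L}^q$-maximal regularity property on $\mathbb{R}_+$ by the Dore--Venni theorem \cite[Theorem~2.1]{DoreVenni1987}, or more precisely its non-invertible version \cite[Corollary~9.3.12]{bookHaase2006}. Extending $f$ by zero outside $(0,T)$ if necessary, the Duhamel solution $u_2$ then satisfies $\lVert A u_2\rVert_{\mathrm{L}^q(\mathbb{R}_+, X)} \lesssim \lVert f\rVert_{\mathrm{L}^q(\mathbb{R}_+, X)}$, which, combined with the (ACP${}^0$) estimate, yields \eqref{BoundLqMaxReg}. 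The trace estimate \eqref{ContinuityLqMaxReg} then reduces, via the classical embedding
\[
\mathrm{H}^{1,q}((0,T_\ast), X) \cap \mathrm{L}^q((0,T_\ast), \mathrm{D}(A)) \hookrightarrow \mathrm{C}^0([0,T_\ast], (X, \mathrm{D}(A))_{1-1/q, q}),
\]
to the identification \eqref{eq:InterpolationXDomainAIntro}. The $T_\ast$-dependence of the constant arises because, in the absence of invertibility, one has to pass from the semi-norm $\lVert\cdot\rVert_{\mathring{\eus{D}}_A}$ to the full norm $\lVert\cdot\rVert_{\eus{D}_A}$, which requires an auxiliary $\mathrm{L}^q$ control of $u$ on the bounded interval $(0,T_\ast)$.

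The remaining estimate \eqref{LqTBoundLqMaxReg} is obtained directly from the Duhamel representation: the semigroup contribution is dominated in $\mathrm{L}^q((0,T_\ast), X)$ by $T_\ast^{1/q} \lVert u_0\rVert_X$ using only $\mathrm{C}_0$-boundedness, while the convolution $\lVert\int_0^t e^{-(t-s)A} f(s)\,\mathrm{d}s\rVert_X \lesssim t^{1-1/q} \lVert f\rVert_{\mathrm{L}^q((0,t))}$ integrates to $T_\ast \lVert f\rVert_{\mathrm{L}^q}$ by H\"older's inequality. If in addition $0\in\rho(A)$, the sector condition $\omega<\pi/2$ combined with invertibility forces exponential decay of $(e^{-tA})_{t\geqslant 0}$, so that $\lVert e^{-tA} u_0\rVert_{\mathrm{L}^q(\mathbb{R}_+, X)} \lesssim \lVert u_0\rVert_X$, and since $\lVert u_2\rVert_{\mathrm{L}^q(\mathbb{R}_+, X)} \leqslant \lVert A^{-1}\rVert_{X\to X} \lVert Au_2\rVert_{\mathrm{L}^q(\mathbb{R}_+, X)}$, the bound \eqref{LqBoundLqMaxReg} follows; the $T_\ast$-uniform form of \eqref{ContinuityLqMaxReg} is then a consequence of the norm equivalence $\lVert\cdot\rVert_{\mathring{\eus{D}}_A} \sim \lVert\cdot\rVert_{\eus{D}_A}$ provided by \cite[Corollary~6.5.5]{bookHaase2006}. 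The main technical input is really the trace embedding used in the second paragraph; all other steps amount to bookkeeping between the semi-norm $\lVert\cdot\rVert_{\mathring{\eus{D}}_A}$ and the full norm $\lVert\cdot\rVert_{\eus{D}_A}$, together with standard convolution estimates.
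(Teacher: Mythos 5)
Your proposal is correct and follows essentially the same route the paper takes (and cites): linear splitting into (ACP${}^0$) and (ACP${}_0$), the semigroup estimates recorded before the theorem for the initial-value part, the non-invertible Dore--Venni result \cite[Corollary~9.3.12]{bookHaase2006} for the forcing part, the classical trace embedding together with \eqref{eq:InterpolationXDomainAIntro} for \eqref{ContinuityLqMaxReg}, H\"older and the Duhamel formula for \eqref{LqTBoundLqMaxReg}, and exponential decay plus invertibility (with the norm equivalence of \cite[Corollary~6.5.5]{bookHaase2006}) for \eqref{LqBoundLqMaxReg} and the $T_\ast$-uniform constant. No gaps worth flagging.
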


We comment the appearance of \eqref{ContinuityLqMaxReg} : it is a consequence of the trace theory for initial data in $\mathrm{L}^q$-maximal regularity which is itself a consequence of interpolation theory, see \cite[Chapter~4, Theorem~4.10.2]{AmmanBookParabolicVolI1995}, see also \cite[Corollary~1.14]{bookLunardiInterpTheory}. The appearance of \eqref{LqBoundLqMaxReg} comes from invertibility of $A$, so that it suffices to apply \eqref{eqmaxregestimateACP0}.

However, the approach used to obtain Theorem \ref{thm:LqMaxRegUMD} prevents $\mathrm{L}^1$ and $\mathrm{L}^\infty$-maximal regularity on $X$. Indeed, the UMD property requires the space $X$ to be at least reflexive, which is not the case for all spaces that are of use in partial differential equations (one may think about endpoint Besov spaces like $\mathrm{B}^{s}_{p,1}$ and $\mathrm{B}^{s}_{p,\infty}$, or even the space of continuous bounded functions $\mathrm{C}^0_b$).

\subsection{Revisited homogeneous operator and interpolation theory and global-in-time estimate for the Da Prato-Grisvard \texorpdfstring{$\mathrm{L}^q$}{Lq}-maximal regularity}

To overcome such difficulties, we present a theorem due to Da Prato and Grisvard \cite{DaPratoGrisvard1975}, where the idea was to replace $X$ by $\eus{D}_{A}(\theta,q)$, and look for $\mathrm{L}^q$-maximal regularity property on it instead of $X$, allowing $q=1$.

\begin{theorem}[ {\cite[Theorem~4.15]{DaPratoGrisvard1975}} ]\label{thm:DaPratoGrisvard1975} Let $\omega\in [0,\frac{\pi}{2})$, $(\mathrm{D}(A),A)$ an $\omega$-sectorial operator on a Banach space $X$. Let $q\in [1,+\infty)$, $\theta\in(0,\tfrac{1}{q})$, $\theta_q := \theta +1-1/q$, and let $T\in (0,+\infty)$.

For $f\in \mathrm{L}^q((0,T),{\eus{D}}_{A}(\theta,q))$ and $u_0\in {\eus{D}}_{A}(\theta_q,q)$, the problem \eqref{ACP} admits a unique mild solution
\begin{align*}
    u\in \mathrm{C}^0_b([0,T],{\eus{D}}_{A}(\theta_q,q))\text{,}
\end{align*}
such that $\partial_t u$, $Au\in \mathrm{L}^q((0,T),{\eus{D}}_{A}(\theta,q))$ with estimate
\begin{align}
    \lVert u\rVert_{\mathrm{L}^\infty([0,T],{\eus{D}}_{A}(\theta_q,q))} \lesssim_{A,\theta,q,T} \lVert (\partial_t u, Au)\rVert_{\mathrm{L}^q((0,T),{\eus{D}}_{A}(\theta,q))} \lesssim_{A,\theta,q,T} \lVert f\rVert_{\mathrm{L}^q((0,T),{\eus{D}}_{A}(\theta,q))} + \lVert u_0\rVert_{{\eus{D}}_{A}(\theta_q,q)}\text{. } \label{estimateLqMaxRegDaPratoGrisvard}
\end{align}
If moreover $0\in\rho(A)$, \eqref{estimateLqMaxRegDaPratoGrisvard} still holds with uniform constant with respect to $T$, allowing $T=+\infty$.
\end{theorem}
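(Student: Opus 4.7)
The plan is to split \eqref{ACP} into the homogeneous subproblem (with $f=0$) and the inhomogeneous subproblem (with $u_0=0$) by linearity, treat each in the interpolation space $\eus{D}_A(\theta,q)$, and then recombine via the triangle inequality. The main obstacle will be the Duhamel estimate for the inhomogeneous problem: the singular kernel $\lVert Ae^{-tA}\rVert_{X\to X}\sim 1/t$ is not locally integrable, and the whole point of the Da Prato-Grisvard trick is that passing from $X$ to $\eus{D}_A(\theta,q)$ repairs this defect thanks to the extra weight coming from the integral characterization of the interpolation norm.

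For the homogeneous part the candidate solution is $t\mapsto e^{-tA}u_0$. I would first check that $(e^{-tA})_{t\geqslant 0}$ restricts to a bounded $\mathrm{C}_0$-semigroup on each $\eus{D}_A(\sigma,q)$: invariance is clear because $e^{-tA}$ commutes with the resolvents of $A$, uniform-in-$t$ boundedness follows by interpolation between $X$ and $\mathrm{D}(A)$, and strong continuity from density. This already yields $u\in\mathrm{C}^0_b([0,T],\eus{D}_A(\theta_q,q))$ with the $\mathrm{L}^\infty$ bound. The $\mathrm{L}^q((0,T),\eus{D}_A(\theta,q))$ bound on $Ae^{-tA}u_0$ is then exactly the trace characterization of the real interpolation space $\eus{D}_A(\theta_q,q)$ via the semigroup, combined with the reiteration identity
\begin{align*}
    \eus{D}_A(\theta_q,q) = (\eus{D}_A(\theta,q),\mathrm{D}(A_\theta))_{1-\frac{1}{q},q},
\end{align*}
where $A_\theta$ denotes the part of $A$ in $\eus{D}_A(\theta,q)$; the shift $\theta_q=\theta+(1-1/q)$ is tuned precisely so that the $\mathrm{L}^q$-integrability in time comes out.

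For the inhomogeneous part the candidate solution is $u(t)=\int_0^t e^{-(t-s)A}f(s)\,\mathrm{d}s$, and the key estimate to prove is
\begin{align*}
    \lVert Au\rVert_{\mathrm{L}^q((0,T),\eus{D}_A(\theta,q))}\lesssim \lVert f\rVert_{\mathrm{L}^q((0,T),\eus{D}_A(\theta,q))}.
\end{align*}
I would expand the norm via $\lVert Au(t)\rVert_{\mathring{\eus{D}}_A(\theta,q)}^q=\int_0^\infty r^{(1-\theta)q}\lVert A^2 e^{-rA}u(t)\rVert_X^q\,\tfrac{\mathrm{d}r}{r}$, push $A^2 e^{-rA}$ inside the Duhamel integral and interchange the $r$- and $s$-integrations by Minkowski/Fubini, then invoke the sectorial bound $\lVert A^2 e^{-vA}\rVert_{X\to X}\lesssim v^{-2}$ and the factorization $A^2 e^{-vA}=(Ae^{-(v/2)A})\circ(Ae^{-(v/2)A})$ to reveal an auxiliary $\lVert Ae^{-uA}f(s)\rVert_X$-type kernel. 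The $r$-integration then produces a locally integrable weight in $(t-s)$ against which a Young-type convolution inequality closes. This convolution calculation is the central technical step, and where I expect most of the effort to go.

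Once both pieces are in hand, the continuity $u\in\mathrm{C}^0_b([0,T],\eus{D}_A(\theta_q,q))$ together with the $\mathrm{L}^\infty$ bound in \eqref{estimateLqMaxRegDaPratoGrisvard} follows from the standard vector-valued mixed-derivative trace embedding applied to $u$ viewed as a function valued in $\eus{D}_A(\theta,q)$, using once more the reiteration identity above to identify the trace space as $\eus{D}_A(\theta_q,q)$. Uniqueness of the mild solution is inherited from $\mathrm{C}_0$-semigroup theory. Finally, when $0\in\rho(A)$, the equivalence $\lVert\cdot\rVert_{\mathring{\eus{D}}_A(\sigma,q)}\sim\lVert\cdot\rVert_{\eus{D}_A(\sigma,q)}$ and the uniform (in fact exponential) decay of $(e^{-tA})$ suppress every $T$-dependent constant, so the whole estimate extends to $T=+\infty$.
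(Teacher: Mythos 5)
First, a point of comparison: the paper does not prove this statement at all — it is quoted from \cite[Theorem~4.15]{DaPratoGrisvard1975} as part of the review in Section~2 — so your proposal can only be measured against the classical argument and against the closely related computations the paper does carry out (Lemma~\ref{lem:homdataSobest}, Corollary~\ref{cor:ContinuityDAthetaq}, and \cite[Lemma~2.19, Theorem~2.20]{DanchinHieberMuchaTolk2020}), all of which run on the same mechanism. Your overall architecture is the standard one and is fine: splitting into (ACP${}^0$) and (ACP${}_0$), invariance and strong continuity of $e^{-tA}$ on ${\eus{D}}_{A}(\sigma,q)$, the trace/reiteration identification of ${\eus{D}}_{A}(\theta_q,q)$, uniqueness from mild-solution theory, and the $0\in\rho(A)$ argument for $T=+\infty$. (Minor caveat: the identity ${\eus{D}}_{A}(\theta_q,q)=({\eus{D}}_{A}(\theta,q),\mathrm{D}(A_\theta))_{1-1/q,q}$, which you invoke twice, is correct but not free, since $\mathrm{D}(A_\theta)$ is not an endpoint of the couple $(X,\mathrm{D}(A))$; it requires reiteration through the second-order spaces.)

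The genuine gap is in the step you yourself single out as central, and the plan you state for it — ``the $r$-integration produces a locally integrable weight in $(t-s)$ against which a Young-type convolution inequality closes'' — does not close. If you decouple $f(s)$ from the auxiliary scale via $\lVert Ae^{-vA}f(s)\rVert_X\lesssim v^{\theta-1}\lVert f(s)\rVert_{{\eus{D}}_A(\theta,\infty)}$, the $r$-integration yields the kernel $(t-s)^{-1}$, which is not locally integrable: this is precisely the maximal-regularity obstruction. If instead you keep the auxiliary kernel $\varphi(v,s):=\lVert Ae^{-vA}f(s)\rVert_X$, the $r$-integration (after Minkowski in $s$ inside the $r$-integral) does give the weight $(t-s)^{-\theta}$, but the concluding Young step then needs $\int_0^\infty \tau^{-\theta}G(\tau)\,\mathrm{d}\tau<\infty$ with $G(v):=(\int_0^T\varphi(v,s)^q\,\mathrm{d}s)^{1/q}$, an $\ell^1$-type (i.e.\ ${\eus{D}}_A(\theta,1)$-flavoured) quantity in the auxiliary variable which is \emph{not} controlled by $\int_0^T\lVert f(s)\rVert_{{\eus{D}}_A(\theta,q)}^q\,\mathrm{d}s$ when $q>1$ (test $G(v)\approx v^{-(1-\theta)}(\log(e+1/v))^{-a}$ with $1/q<a\leqslant 1$). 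The correct mechanism keeps the two auxiliary scales coupled: for fixed $r$, bound $\lVert Ae^{-rA}Au(t)\rVert_X\lesssim\int_0^t(r+t-s)^{-1}\varphi(\tfrac{r+t-s}{2},s)\,\mathrm{d}s$, apply Minkowski/Young in the \emph{time} variable so that its $\mathrm{L}^q_t$-norm is $\lesssim\int_r^{\infty}G(\sigma/2)\,\tfrac{\mathrm{d}\sigma}{\sigma}$ after the substitution $\sigma=r+(t-s)$, and only then integrate in $r$ against $r^{(1-\theta)q-1}$ using Hardy's inequality \cite[Lemma~6.2.6]{bookHaase2006} (valid since $(1-\theta)q>0$), which lands exactly on $\int_0^T\lVert f(s)\rVert^q_{\mathring{\eus{D}}_A(\theta,q)}\,\mathrm{d}s$. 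In short, ``Young with a locally integrable kernel'' must be replaced by ``Hardy in the interpolation variable'' — the same device used in Da Prato--Grisvard, in \cite[Lemma~2.19]{DanchinHieberMuchaTolk2020}, and in the paper's own Corollary~\ref{cor:ContinuityDAthetaq}; without it the heart of the theorem is missing.
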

This Da Prato-Grisvard theorem does not have global in time estimate if $0\notin \rho (A)$, as was the case for the estimate \eqref{BoundLqMaxReg} of Theorem \ref{thm:LqMaxRegUMD}. The estimate \eqref{BoundLqMaxReg} is uniform in time : this is due to the fact that the estimate is \textbf{homogeneous}. This keypoint was captured in the work of Danchin, Hieber, Mucha and Tolksdorf \cite[Chapter~2]{DanchinHieberMuchaTolk2020} to build an \textbf{homogeneous} version of the Da~Prato-Grivard theorem for injective sectorial operators under some additional assumptions on $A$. We are going to present briefly their construction.
\begin{assumption}\label{asmpt:homogeneousdomaindef} The operator $(\mathrm{D}(A),A)$ is injective on $X$, and there exists a normed vector space $(Y, \left\lVert \cdot \right\rVert_{Y})$, such that for all $x\in \mathrm{D}(A)$,
\begin{align}
        \left\lVert Ax\right\rVert_{X} \sim \left\lVert x \right\rVert_{Y} \text{. }
\end{align}
\end{assumption}

The idea is to construct an homogeneous version of $A$ denoted $\mathring{A}$, defining first its domain
\begin{align*}
    \mathrm{D}(\mathring{A}) := \{\, y\in Y\, |\, \exists (x_n)_{n\in\mathbb{N}}\subset \mathrm{D}(A),\, \left\lVert y-x_n\right\rVert_Y\underset{n\rightarrow +\infty}{\longrightarrow} 0   \,\}\text{. }
\end{align*}
Then, for all $y\in \mathrm{D}(\mathring{A})$,
\begin{align*}
    \mathring{A}y := \lim_{n\rightarrow +\infty} Ax_n \text{. }
\end{align*}
Constructed this way, the operator $\mathring{A}$ is then injective on $\mathrm{D}(\mathring{A})$. We notice that $\mathrm{D}(\mathring{A})$ is a normed vector space, but not necessarily complete. We also need the existence of a Hausdorff topological vector space $Z$, such that $X,Y\subset Z$, and to consider the following assumption 
\begin{assumption}\label{asmpt:homogeneousdomainintersect} The operator $(\mathrm{D}(A),A)$ and the normed vector space $Y$ are such that
\begin{align}
        X\cap\mathrm{D}(\mathring{A}) = \mathrm{D}({A}) \text{. }
\end{align}
\end{assumption}
As a consequence of all above assumptions, we can extend naturally, see \cite[Remark~2.7]{DanchinHieberMuchaTolk2020}, $(e^{-tA})_{t\geqslant0}$ to a $\mathrm{C}_0$-semigroup,
\begin{align*}
        e^{-tA}\,:\, X+\mathrm{D}(\mathring{A}) \longrightarrow X+\mathrm{D}(\mathring{A})\text{ , } t\geqslant 0 \text{, }
\end{align*}
by the mean of the following formula for all $(x_0,a_0)\in X\times \mathrm{D}(\mathring{A})$, $t\geqslant 0$,
\begin{align}\label{eq:DefExtendedsemigroup}
    e^{-tA}(x_0+a_0) := e^{-tA}x_0 + \left(a_0-\int_{0}^{t} e^{-\tau A}\mathring{A} a_0 \,\mathrm{d} \tau\right).
\end{align}
and so that for $u_0\in X + \mathrm{D}(\mathring{A})$, and fixed $t$, the value above does not depend on the choice of decomposition $u_0=x_0+a_0$, see \cite[Proposition~2.6]{DanchinHieberMuchaTolk2020}.

Moreover, for all $u_0=x_0+a_0\in X + \mathrm{D}(\mathring{A})$, it is straight forward to see from \eqref{eq:DefExtendedsemigroup} and \cite[Proposition~2.6]{DanchinHieberMuchaTolk2020}, that $t\mapsto e^{-tA}u_0$ is strongly differentiable at any order with continuous derivatives on $(0,+\infty)$ taking its values in $X$. For $k\in\llb 1,+\infty\llb$, $t>0$, by analyticity of the semigroup
\begin{align*}
    (-\partial_t)^k(e^{-(\cdot)A}u_0)(t) = A^{k}e^{-tA}x_0 + A^{k-1}e^{-tA}\mathring{A}a_0 =A^{k-1}\mathring{A}e^{-tA}u_0 \in \mathrm{D}(A)\subset X\text{.}
\end{align*}

From there, one can fully make sense of the following vector space,
\begin{align*}
    \mathring{\eus{D}}_{A}(\theta,q) := \left\{ v\in X+\mathrm{D}(\mathring{A})\,\big{|}\, \lVert v\rVert_{\mathring{\eus{D}}_{A}(\theta,q)}<+\infty \right\} \text{.}
\end{align*}
Similarly to what happens for ${\eus{D}}_{A}(\theta,q)$ in \eqref{eq:InterpolationXDomainAIntro}, it has been proved in \cite[Proposition~2.12]{DanchinHieberMuchaTolk2020}, that the following equality holds with equivalence of norms,
\begin{align}
    \mathring{\eus{D}}_{A}(\theta,q) = (X,\mathrm{D}(\mathring{A}))_{\theta,q}\text{. }
\end{align}
However, the lack of completeness for $\mathrm{D}(\mathring{A})$ implies that $\mathring{\eus{D}}_{A}(\theta,q)$ is not necessarily complete. This has consequences on how to consider the forcing term $f$ in \eqref{ACP}, choosing $f\in \mathrm{L}^q((0,T),{\eus{D}}_{A}(\theta,q))$ instead of $f\in \mathrm{L}^q((0,T),\mathring{\eus{D}}_{A}(\theta,q))$ to avoid definition issues, the latter choice being possible when $\mathring{\eus{D}}_{A}(\theta,q)$ is a Banach space.

\begin{theorem}{\textbf{\textit{(\cite[Theorem~2.20]{DanchinHieberMuchaTolk2020})}}} \label{thm:DaPratoGrisvardHom2020} Let $\omega\in [0,\frac{\pi}{2})$, $(\mathrm{D}(A),A)$ an $\omega$-sectorial operator on a Banach space $X$ such that Assumptions \eqref{asmpt:homogeneousdomaindef} and \eqref{asmpt:homogeneousdomainintersect} are satisfied. Let $q\in [1,+\infty)$, $\theta\in(0,\tfrac{1}{q})$, $\theta_q := \theta +1-1/q$, and let $T\in(0,+\infty]$.

For $f\in \mathrm{L}^q((0,T),{\eus{D}}_{A}(\theta,q))$ and $u_0\in \mathring{\eus{D}}_{A}(\theta_q,q)$, the problem \eqref{ACP} admits a unique mild solution
\begin{align*}
    u\in \mathrm{C}^0_{b}([0,T],\mathring{\eus{D}}_{A}(\theta_q,q))\text{,}
\end{align*}
such that $\partial_t u$, $Au\in \mathrm{L}^q((0,T),\mathring{\eus{D}}_{A}(\theta,q))$ with estimates,
\begin{align}
    \lVert u\rVert_{\mathrm{L}^\infty([0,T],\mathring{\eus{D}}_{A}(\theta_q,q))} + \lVert (\partial_t u, Au)\rVert_{\mathrm{L}^q((0,T),\mathring{\eus{D}}_{A}(\theta,q))} \lesssim_{q,A} \lVert f\rVert_{\mathrm{L}^q((0,T),\mathring{\eus{D}}_{A}(\theta,q))} + \lVert u_0\rVert_{\mathring{\eus{D}}_{A}(\theta_q,q)}\text{. } \label{estimateLqMaxRegDaPratoGrisvardHom}
\end{align}
In case $q=+\infty$, we assume in addition that $u_0\in \mathrm{D}(A^2)$ and then for each $\theta\in (0,1)$,
\begin{align}
    \lVert (\partial_t u, Au)\rVert_{\mathrm{L}^\infty([0,T],\mathring{\eus{D}}_{A}(\theta,\infty))} \lesssim_{q,A} \lVert f\rVert_{\mathrm{L}^\infty((0,T),{\eus{D}}_{A}(\theta,\infty))} + \lVert A u_0\rVert_{\mathring{\eus{D}}_{A}(\theta,\infty)}\text{. }
\end{align}
\end{theorem}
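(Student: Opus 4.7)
The plan is to decompose \eqref{ACP} by linearity into the homogeneous Cauchy problem (forcing $f=0$) and the Duhamel part ($u_0=0$), and to treat each independently; uniqueness in the relevant class is immediate since the mild formula uniquely prescribes $u$ in the ambient Hausdorff topological vector space $X+\mathrm{D}(\mathring{A})$.

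For the initial-data part, I would set $u(t):=e^{-tA}u_0$ via the extension \eqref{eq:DefExtendedsemigroup} for $u_0\in\mathring{\eus{D}}_A(\theta_q,q) = (X,\mathrm{D}(\mathring{A}))_{\theta_q,q}$. The strong continuity of $(e^{-tA})_{t\geqslant 0}$ on both $X$ and $\mathrm{D}(\mathring{A})$ in their respective norms, together with a standard interpolation argument, yields $u\in \mathrm{C}^0_b([0,T],\mathring{\eus{D}}_A(\theta_q,q))$. Since $\partial_t u=-\mathring{A}u$, the time-derivative estimate reduces to controlling
\begin{align*}
    \int_0^T\!\!\int_0^{+\infty}\! \big(s^{1-\theta}\lVert A^2 e^{-(s+t)A}u_0\rVert_X\big)^q\, \frac{\mathrm{d}s}{s}\,\mathrm{d}t,
\end{align*}
which, by Fubini and the change of variable $r=s+t$, is bounded by the higher-order characterization of $\mathring{\eus{D}}_A(\theta_q,q)$ via the semi-norm $\big(\int_0^{+\infty}(r^{2-\theta_q}\lVert A^2 e^{-rA}u_0\rVert_X)^q\,\mathrm{d}r/r\big)^{1/q}$.

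For the Duhamel part, I set $u(t):=\int_0^t e^{-(t-\tau)A}f(\tau)\,\mathrm{d}\tau$, and rerun the proof of Theorem \ref{thm:DaPratoGrisvard1975}, but now directly within the \emph{homogeneous} seminorms. The ingredients are the pointwise bound $\lVert Ae^{-tA}\rVert_{X\to X}\leqslant C/t$ and a Hardy-Young convolution estimate on $(0,+\infty)$; both are invariant under the scaling $t\mapsto\lambda t$, so the resulting estimate for $\lVert Au\rVert_{\mathrm{L}^q((0,T),\mathring{\eus{D}}_A(\theta,q))}$ holds with a constant independent of $T$ and without any invertibility hypothesis on $A$. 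The bound on $\lVert u\rVert_{\mathrm{L}^\infty}$ and the continuity are then recovered by applying the interpolation trace theorem to the pair $(X,\mathrm{D}(\mathring{A}))$, once the control on $\mathring{A}u$ has been established.

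The main obstacle is that $\mathrm{D}(\mathring{A})$, and hence $\mathring{\eus{D}}_A(\theta,q)$, need not be complete. Consequently, Bochner integrability and convergence must be phrased in the larger Hausdorff space $X+\mathrm{D}(\mathring{A})$; that the limits lie in the smaller homogeneous real-interpolation space has to be verified a posteriori by controlling $\mathring{A}u$ rather than by passing to any completion. The extension \eqref{eq:DefExtendedsemigroup} is essential here, since it endows every $e^{-tA}u_0$ and every Duhamel integral with an unambiguous meaning in $X+\mathrm{D}(\mathring{A})$. Finally, for $q=+\infty$ the assumption $u_0\in\mathrm{D}(A^2)$ yields $\mathring{A}u_0\in\mathrm{D}(A)$, so one simply runs the previous argument with $\mathring{A}u$ in place of $u$, for which $\mathring{A}e^{-tA}u_0 = e^{-tA}\mathring{A}u_0$ is a genuine $\mathrm{C}_0$-orbit in $X$.
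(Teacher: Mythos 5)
Be aware that the paper itself gives no proof of Theorem \ref{thm:DaPratoGrisvardHom2020}: it is quoted verbatim from \cite[Theorem~2.20]{DanchinHieberMuchaTolk2020} as part of the state-of-the-art section, so there is no internal argument to compare against. Your sketch essentially reconstructs the route of the cited original. The treatment of the free evolution — Fubini, the substitution $r=s+t$, and the higher-order characterization of $\mathring{\eus{D}}_{A}(\theta_q,q)$ through the seminorm built on $A^2e^{-rA}$ (that is \cite[Lemma~2.15]{DanchinHieberMuchaTolk2020}) — is exactly right and yields a $T$-independent constant; the treatment of the Duhamel term via $\lVert Ae^{-tA}\rVert_{X\to X}\lesssim t^{-1}$ and Hardy-type inequalities, with all objects interpreted in $X+\mathrm{D}(\mathring{A})$ rather than in a completion, is likewise the mechanism of \cite[Lemma~2.19, Theorem~2.20]{DanchinHieberMuchaTolk2020}, which this paper itself re-uses in Corollary \ref{cor:ContinuityDAthetaq} and Theorem \ref{thm:LqMaxRegUMDHomogeneous}.

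Three points are too quick as written. First, for the $\mathrm{L}^\infty$-in-time bound of the Duhamel part you invoke ``the interpolation trace theorem for the pair $(X,\mathrm{D}(\mathring{A}))$'', but the information you have is $\partial_t u,\,Au\in\mathrm{L}^q((0,T),\mathring{\eus{D}}_{A}(\theta,q))$, so the relevant couple sits at level $\theta$ and reaching $\mathring{\eus{D}}_{A}(\theta_q,q)$ requires a reiteration identity together with a trace theorem valid for non-complete couples — precisely what is not available off the shelf here. The cited source (and Step~1 of Corollary \ref{cor:ContinuityDAthetaq} in this paper, for $\theta=1-1/q$) instead estimates $\lVert u(t)\rVert_{\mathring{\eus{D}}_{A}(\theta_q,q)}$ directly from the semigroup representation, splitting the kernel $1/(\tau+s)$ and applying Hardy's inequality; you should do the same. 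Second, the $q=+\infty$ clause is not merely ``the same argument with $\mathring{A}u$ in place of $u$'': that substitution only handles the free evolution, where $u_0\in\mathrm{D}(A^2)$ indeed makes $t\mapsto Ae^{-tA}u_0=e^{-tA}Au_0$ a bounded orbit in $\mathring{\eus{D}}_{A}(\theta,\infty)$ up to $t=0$; the convolution term must still be re-estimated in the sup-in-time scale. Third, uniqueness is not ``immediate from the mild formula'': one subtracts two candidate solutions, observes the difference is a classical mild solution of the problem with zero data, and invokes \cite[Proposition~3.1.16]{ArendtBattyHieberNeubranker2011}, as the paper does in Section 4.1.
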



\section{Vector-valued Sobolev spaces in a UMD Banach space and the time derivative}

\subsection{Banach valued Bessel and Riesz potential Sobolev spaces}

This subsection is devoted to few reminders on Bessel potential spaces on the whole line with values in a Banach space $X$ which is known to be UMD. This will be based on the constructions provided  by \cite{MeyriesVeraar2012,LindemulderMeyriesVeerar2018} and \cite[Chapter~5,~Section~5.6]{HytonenNeervenVeraarWeisbookVolI2016}, see also the references therein. 
From the properties we are going to gather about Bessel potential Sobolev spaces, we will be able give a simple construction of homogeneous (Riesz potential) Sobolev spaces with values in $X$, for regularity index near $0$. Namely, we will focus on the regularity index $\alpha\in(-1+1/q,1/q)$ when $q\in(1,+\infty)$.

We chose to investigate the Sobolev space on the one the half-line for which we won't have additional compatibility conditions at $0$. Notice that the condition on the regularity index is also here in order to avoid troubles of definition. Indeed,  the definition of homogeneous function spaces for regularity exponents beyond $1/q$ is not clear and a choice of realization have to be done, even in the scalar case. Such choice implies generally the loss of one, or more, usual and useful properties, like either the loss of distribution theory, the loss of completeness on the whole scale, or the loss of pointwise/meaningful (para-)products (in the scalar case, $X=\mathbb{C}$). See for instance \cite{bookBahouriCheminDanchin,DanchinHieberMuchaTolk2020,Gaudin2022,bookSawano2018} and reference therein for various constructions and addressed issues in the scalar-valued case.

From there and until the end of this article, we assume that $X$ has the UMD property. We recall that such space $X$ is necessarily reflexive.

\begin{definition}For $q\in(1,+\infty)$, $\alpha\in\mathbb{R}$, we define the vector space
\begin{align*}
    \mathrm{H}^{\alpha,q}(\mathbb{R},X) := \left\{\,u\in\eus{S}'(\mathbb{R},X)\,\big|\, (\mathrm{I}-\partial_{x}^2)^{\frac{\alpha}{2}}u\in\mathrm{L}^{q}(\mathbb{R},X)\,\right\}
\end{align*}
with its associated norm
\begin{align*}
    \lVert u\rVert_{\mathrm{H}^{\alpha,q}(\mathbb{R},X)} := \lVert (\mathrm{I}-\partial_{x}^2)^{\frac{\alpha}{2}}u\rVert_{\mathrm{L}^{q}(\mathbb{R},X)}\text{.}
\end{align*}
Here, $(\mathrm{I}-\partial_{x}^2)^{\frac{\alpha}{2}}$ have to be understood as the usual Fourier multiplier operator.
\end{definition}

Before going further, we introduce
\begin{align*}
    \eus{S}_0(\mathbb{R},X):= \left\{\,u\in\eus{S}(\mathbb{R},X)\,\big|\, \mathrm{supp}(\eus{F}u) \text{ is compact, } 0\notin \mathrm{supp}(\eus{F}u)\,\right\}\text{.}
\end{align*}

\begin{proposition}\label{prop:PropertiesinhomSobolevSpaces}Let $q\in(1,+\infty)$, $\alpha\in\mathbb{R}$, the following properties are true :
\begin{enumerate}
    \item $\mathrm{H}^{\alpha,q}(\mathbb{R},X)$ is a reflexive Banach space with
    \begin{align*}
        (\mathrm{H}^{\alpha,q}(\mathbb{R},X))^\ast = \mathrm{H}^{-\alpha,q'}(\mathbb{R},X^\ast)\text{;}
    \end{align*}
    \item $\eus{S}_0(\mathbb{R},X)$ is a dense subspace of $\mathrm{H}^{\alpha,q}(\mathbb{R},X)$;
    \item Provided $\alpha\geqslant 0$, for all $u\in\eus{S}'(\mathbb{R},X)$,
    \begin{align*}\label{eq:EquivofNormsHomInhomHaq}
        \lVert u\rVert_{\mathrm{H}^{\alpha,q}(\mathbb{R},X)} \sim_{\alpha,q,X} \lVert u\rVert_{\mathrm{L}^{q}(\mathbb{R},X)} + \lVert (-\partial_{x}^2)^{\frac{\alpha}{2}}u\rVert_{\mathrm{L}^{q}(\mathbb{R},X)}\text{;}
    \end{align*}
    \item Provided $\alpha \in [0,1/q)$, $\frac{1}{r}=\frac{1}{q}-\alpha$, for all $u\in \mathrm{H}^{\alpha,q}(\mathbb{R},X)$,
    \begin{align*}
        \lVert u\rVert_{\mathrm{L}^{r}(\mathbb{R},X)} \lesssim_{\alpha,q,X} \lVert (-\partial_{x}^2)^{\frac{\alpha}{2}}u\rVert_{\mathrm{L}^{q}(\mathbb{R},X)}\text{;}
    \end{align*}
    \item Provided $\alpha \in (-1+1/q,1/q)$, for all $u\in \mathrm{H}^{\alpha,q}(\mathbb{R},X)$,
    \begin{align*}
        \lVert \mathbbm{1}_{\mathbb{R}_+}u\rVert_{\mathrm{H}^{\alpha,q}(\mathbb{R},X)} \lesssim_{\alpha,q,X} \lVert u\rVert_{\mathrm{H}^{\alpha,q}(\mathbb{R},X)}\text{.}
    \end{align*}
\end{enumerate}
\end{proposition}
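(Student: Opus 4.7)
The plan is to dispatch items (i)--(iv) as textbook facts about vector-valued Bessel potential spaces on a UMD space, referring to \cite{MeyriesVeraar2012,LindemulderMeyriesVeerar2018,HytonenNeervenVeraarWeisbookVolI2016}: (i) combines the isomorphism $(\mathrm{I}-\partial_x^2)^{\alpha/2}\,:\,\mathrm{H}^{\alpha,q}(\mathbb{R},X)\to\mathrm{L}^q(\mathbb{R},X)$ (Mikhlin multiplier theorem, where UMD enters) with the standard duality of vector-valued Lebesgue spaces and reflexivity of $X$; (ii) follows from the density of $\eus{S}_0$ in $\mathrm{L}^q(\mathbb{R},X)$ via a Fourier cut-off argument, transported through the Bessel isomorphism; (iii) is the equivalence of the inhomogeneous Bessel norm with the sum of the $\mathrm{L}^q$-norm and the Riesz seminorm for $\alpha\geqslant 0$, again via Mikhlin; (iv) is the vector-valued Hardy--Littlewood--Sobolev embedding, obtained through the same multiplier framework. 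The substantive new item is (v).

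For (v), the strategy splits along the sign of $\alpha$. In the positive range $\alpha\in[0,1/q)$ I would factor $u\mapsto\mathbbm{1}_{\mathbb{R}_+}u$ as the restriction $R_+\,:\,\mathrm{H}^{\alpha,q}(\mathbb{R},X)\to\mathrm{H}^{\alpha,q}(\mathbb{R}_+,X)$, trivially bounded by definition of the Sobolev space on the half-line, composed with extension by zero $E_0\,:\,\mathrm{H}^{\alpha,q}(\mathbb{R}_+,X)\to\mathrm{H}^{\alpha,q}(\mathbb{R},X)$. The key point is that $\alpha<1/q$ prevents the trace at the origin from being defined on $\mathrm{H}^{\alpha,q}(\mathbb{R}_+,X)$, so no compatibility condition obstructs extension by zero and $E_0$ is bounded; this is the vector-valued UMD counterpart of Shamir's classical Heaviside multiplier theorem and is established in \cite{MeyriesVeraar2012}. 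Since $E_0R_+u=\mathbbm{1}_{\mathbb{R}_+}u$, the desired estimate follows.

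In the negative range $\alpha\in(-1+1/q,0)$ I would use duality. As $X^\ast$ is again UMD and $-\alpha\in(0,1-1/q)=(0,1/q')$, the positive case applied to $(-\alpha,q',X^\ast)$ yields the boundedness of multiplication by $\mathbbm{1}_{\mathbb{R}_+}$ on $\mathrm{H}^{-\alpha,q'}(\mathbb{R},X^\ast)$. For $u\in\eus{S}_0(\mathbb{R},X)$ and $\phi\in\eus{S}_0(\mathbb{R},X^\ast)$, the pointwise identity $\langle\mathbbm{1}_{\mathbb{R}_+}u,\phi\rangle=\langle u,\mathbbm{1}_{\mathbb{R}_+}\phi\rangle$ combined with (i) gives
\begin{align*}
  |\langle\mathbbm{1}_{\mathbb{R}_+}u,\phi\rangle|\leqslant \lVert u\rVert_{\mathrm{H}^{\alpha,q}(\mathbb{R},X)}\,\lVert\mathbbm{1}_{\mathbb{R}_+}\phi\rVert_{\mathrm{H}^{-\alpha,q'}(\mathbb{R},X^\ast)}\lesssim \lVert u\rVert_{\mathrm{H}^{\alpha,q}(\mathbb{R},X)}\,\lVert\phi\rVert_{\mathrm{H}^{-\alpha,q'}(\mathbb{R},X^\ast)}\text{,}
\end{align*}
and taking the supremum over $\phi$ together with the density assertion of (ii) concludes.

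The main obstacle is the positive-range step: any direct proof of the boundedness of $E_0$ requires either a sharp interpolation identity detecting the absence of a vanishing-trace condition for $\alpha<1/q$ (for instance an identity of the type $[\mathrm{L}^q,\mathrm{H}^{1,q}_{\{0\}}]_{\alpha}=\mathrm{H}^{\alpha,q}$), or a Littlewood--Paley analysis of the jump produced by $\mathbbm{1}_{\mathbb{R}_+}$ at $0$. In both routes, keeping the constants uniform in $u$ is the delicate point and is precisely where the UMD hypothesis is used through Mikhlin-type theorems. Once this is in hand, duality automatically extends the result to the asymmetric range $(-1+1/q,1/q)=(-1/q',1/q)$ which is the one stated.
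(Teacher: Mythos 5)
Your treatment of (i)--(iv) is essentially the paper's: it too dispatches these points to the literature ((i) standard, (ii) reduced to the case $\alpha=0$, (iii) quoted from Lindemulder--Meyries--Veraar, (iv) from Meyries--Veraar). The one imprecision is in (iv): the stated right-hand side is the homogeneous quantity $\lVert(-\partial_x^2)^{\alpha/2}u\rVert_{\mathrm{L}^q(\mathbb{R},X)}$ alone, and the cited embedding naturally gives the full inhomogeneous norm; the paper removes the lower-order term by combining (iii) with a dilation argument, a step you should make explicit. For (v) you take a genuinely different route: the paper simply invokes \cite[Theorem~4.1]{LindemulderMeyriesVeerar2018}, which gives boundedness of multiplication by $\mathbbm{1}_{\mathbb{R}_+}$ on $\mathrm{H}^{\alpha,q}(\mathbb{R},X)$ directly for the whole range $\alpha\in(-1+1/q,1/q)$, whereas you prove only $\alpha\in[0,1/q)$ by factoring through restriction to $\mathbb{R}_+$ and extension by zero, and recover the negative range by duality through (i)--(ii). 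The duality step is sound, with the usual caveats that for $\alpha<0$ the statement is about the bounded extension of the densely defined multiplication on $\eus{S}_0(\mathbb{R},X)$, and that one needs $\mathrm{H}^{-\alpha,q'}(\mathbb{R},X^\ast)$ to be norming for $\mathrm{H}^{\alpha,q}(\mathbb{R},X)$, which (i) together with reflexivity provides. Be aware, though, that your ``key point'' --- boundedness of the zero-extension $E_0$ on $\mathrm{H}^{\alpha,q}(\mathbb{R}_+,X)$ for $0\leqslant\alpha<1/q$ --- is not easier than the statement itself: it is precisely the vector-valued Shamir/Strichartz multiplier theorem, equivalent to (v) in the positive range, and the natural reference is again Lindemulder--Meyries--Veraar rather than \cite{MeyriesVeraar2012}. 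So your organization buys a clean reduction of the negative range to the positive one, at the price of still importing the same deep input from the literature that the paper's single citation already covers for both signs at once.
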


\begin{proof} Point \textit{(i)} is standard. Point \textit{(ii)} is a direct consequence of the corresponding results for $\alpha=0$, see \cite[Lemma~E.5.2]{bookHaase2006}. Point \textit{(iii)} is just \cite[Lemma~4.2]{LindemulderMeyriesVeerar2018}. Point \textit{(iv)} follows from \cite[Corollary~1.4]{MeyriesVeraar2012}, point \textit{(iii)} and a dilation argument. Point \textit{(v)} is just \cite[Theorem~4.1]{LindemulderMeyriesVeerar2018}.
\end{proof}

For $q\in(1,+\infty)$, $\alpha\in(-\infty,1/q)$, thanks to the points \textit{(i)}, \textit{(ii)} and \textit{(iv)} from the Proposition~\ref{prop:PropertiesinhomSobolevSpaces}, we introduce the quantity
\begin{align*}
    \lVert u\rVert_{\dot{\mathrm{H}}^{\alpha,q}(\mathbb{R},X)}:=\lVert (-\partial_{x}^2)^{\frac{\alpha}{2}}u\rVert_{\mathrm{L}^{q}(\mathbb{R},X)}\text{,}
\end{align*}
one can consider the completion
\begin{align*}
    \dot{\mathrm{H}}^{\alpha,q}(\mathbb{R},X) := \overline{\eus{S}_0(\mathbb{R},X)}^{\lVert \cdot\rVert_{\dot{\mathrm{H}}^{\alpha,q}(\mathbb{R},X)}}
\end{align*}
so that the next definition is meaningful.

\begin{definition}For $q\in(1,+\infty)$, $\alpha<1/q$, we define the vector spaces\textit{
\begin{enumerate}
    \item for $\alpha \geqslant 0$, $\frac{1}{r}:=\frac{1}{q}-\alpha$,
    \begin{align*}
    \dot{\mathrm{H}}^{\alpha,q}(\mathbb{R},X) := \left\{\,u\in\mathrm{L}^{r}(\mathbb{R},X)\,\big|\, (-\partial_{x}^2)^{\frac{\alpha}{2}}u\in\mathrm{L}^{q}(\mathbb{R},X)\,\right\}\text{,}
    \end{align*}
    \item for $\alpha \leqslant 0$,
    \begin{align*}
    \dot{\mathrm{H}}^{\alpha,q}(\mathbb{R},X) := \left\{\,u\in\mathrm{H}^{\alpha,q}(\mathbb{R},X)\,\big|\, (-\partial_{x}^2)^{\frac{\alpha}{2}}u\in\mathrm{L}^{q}(\mathbb{R},X)\,\right\}\text{,}
    \end{align*}
\end{enumerate}}
with their associated norm
\begin{align*}
    \lVert u\rVert_{\dot{\mathrm{H}}^{\alpha,q}(\mathbb{R},X)} := \lVert (-\partial_{x}^2)^{\frac{\alpha}{2}}u\rVert_{\mathrm{L}^{q}(\mathbb{R},X)}\text{.}
\end{align*}
Here, $(-\partial_{x}^2)^{\frac{\alpha}{2}}$ have to be understood as the usual Fourier multiplier operator.
\end{definition}

In a similar way, we obtain the following collection of properties.

\begin{proposition}\label{prop:PropertiesHomSobolevSpaces}Let $q\in(1,+\infty)$, $\alpha\in<1/q$, the following properties are true :
\begin{enumerate}
    \item for $\beta\in\mathbb{R}$, such that $\alpha+\beta<1/q$,
    \begin{align*}
        (-\partial_{x}^2)^{\frac{\beta}{2}}\,:\,\dot{\mathrm{H}}^{\alpha+\beta,q}(\mathbb{R},X)\longrightarrow \dot{\mathrm{H}}^{\alpha,q}(\mathbb{R},X)
    \end{align*}
    is an isomorphism of Banach spaces;
    \item $\dot{\mathrm{H}}^{\alpha,q}(\mathbb{R},X)$ is reflexive, and whenever $\alpha\in(-1+1/q,1/q)$,
    \begin{align*}
        (\dot{\mathrm{H}}^{\alpha,q}(\mathbb{R},X))^\ast = \dot{\mathrm{H}}^{-\alpha,q'}(\mathbb{R},X^\ast)\text{;}
    \end{align*}
    \item $\eus{S}_0(\mathbb{R},X)$ is a dense subspace of $\dot{\mathrm{H}}^{\alpha,q}(\mathbb{R},X)$;
    \item provided $\alpha \in [0,1/q)$, $\frac{1}{r}=\frac{1}{q}-\alpha$, for all $u\in \dot{\mathrm{H}}^{\alpha,q}(\mathbb{R},X)$, $v\in\mathrm{L}^{r'}(\mathbb{R},X)$,
    \begin{align*}
        &\lVert u\rVert_{\mathrm{L}^{r}(\mathbb{R},X)} \lesssim_{\alpha,q,X} \lVert u\rVert_{\dot{\mathrm{H}}^{\alpha,q}(\mathbb{R},X)}\text{,}\\
        &\lVert v\rVert_{\dot{\mathrm{H}}^{-\alpha,q'}(\mathbb{R},X)} \lesssim_{\alpha,q,X} \lVert v\rVert_{\mathrm{L}^{r'}(\mathbb{R},X)}\text{;}
    \end{align*}
    \item provided $\alpha \in (-1+1/q,1/q)$, for all $u\in \dot{\mathrm{H}}^{\alpha,q}(\mathbb{R},X)$,
    \begin{align*}
        \lVert \mathbbm{1}_{\mathbb{R}_+}u\rVert_{\dot{\mathrm{H}}^{\alpha,q}(\mathbb{R},X)} \lesssim_{\alpha,q,X} \lVert u\rVert_{\dot{\mathrm{H}}^{\alpha,q}(\mathbb{R},X)}\text{.}
    \end{align*}
\end{enumerate}
\end{proposition}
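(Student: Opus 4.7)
\textit{(Plan.)} My strategy is to prove the five points in sequence, treating $\eus{S}_0(\mathbb{R},X)$ as the dense model subspace and extending by density or duality, with the corresponding inhomogeneous statements from Proposition~\ref{prop:PropertiesinhomSobolevSpaces} as the main input.

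The linchpin is point \emph{(i)}: on $\eus{S}_0(\mathbb{R},X)$ the Fourier multiplier $(-\partial_x^2)^{\beta/2}$ is a bijection onto itself (its symbol $|\xi|^\beta$ is smooth and positive on the support of $\eus{F}u$), and the norm identity $\lVert(-\partial_x^2)^{\beta/2}u\rVert_{\dot{\mathrm{H}}^{\alpha,q}} = \lVert(-\partial_x^2)^{(\alpha+\beta)/2}u\rVert_{\mathrm{L}^q} = \lVert u\rVert_{\dot{\mathrm{H}}^{\alpha+\beta,q}}$ is tautological from the definitions, so the operator extends to an isometric isomorphism of the Banach completions. Specializing $\beta=-\alpha$ yields an isometric iso $\dot{\mathrm{H}}^{\alpha,q}(\mathbb{R},X)\cong \mathrm{L}^q(\mathbb{R},X)$, from which \emph{(ii)} follows directly: $\mathrm{L}^q(\mathbb{R},X)$ is reflexive (as $X$ is UMD, hence reflexive), and the duality $(\dot{\mathrm{H}}^{\alpha,q}(\mathbb{R},X))^\ast\cong \dot{\mathrm{H}}^{-\alpha,q'}(\mathbb{R},X^\ast)$ is obtained by transporting the $\mathrm{L}^q$-$\mathrm{L}^{q'}$ duality through the isometry (checking the pairing agrees with the natural integration pairing on $\eus{S}_0$ by Plancherel); the constraint $\alpha\in(-1+1/q,1/q)$ ensures simultaneously $\alpha<1/q$ and $-\alpha<1/q'$, so both spaces are defined. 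Point \emph{(iii)} is then immediate: any $u$ in the realization satisfies $(-\partial_x^2)^{\alpha/2}u\in\mathrm{L}^q(\mathbb{R},X)$, which one approximates by some $v_n\in\eus{S}_0$ using the standard density of test functions in $\mathrm{L}^q$ (cf.~\cite[Lemma~E.5.2]{bookHaase2006}); then $(-\partial_x^2)^{-\alpha/2}v_n\in\eus{S}_0$ approximates $u$ in $\dot{\mathrm{H}}^{\alpha,q}$.

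For \emph{(iv)}, I apply Proposition~\ref{prop:PropertiesinhomSobolevSpaces}(iv) to the dilation $u_\lambda(x):=u(\lambda x)$ ($u\in\eus{S}_0$, $\lambda>0$), using the scaling identities
\begin{align*}
\lVert u_\lambda\rVert_{\mathrm{L}^r} &= \lambda^{-1/r}\lVert u\rVert_{\mathrm{L}^r}, \qquad \lVert u_\lambda\rVert_{\mathrm{L}^q} = \lambda^{-1/q}\lVert u\rVert_{\mathrm{L}^q},\\
\lVert(-\partial_x^2)^{\alpha/2}u_\lambda\rVert_{\mathrm{L}^q} &= \lambda^{\alpha-1/q}\lVert(-\partial_x^2)^{\alpha/2}u\rVert_{\mathrm{L}^q},
\end{align*}
and sending $\lambda\to+\infty$ to kill the inhomogeneous $\mathrm{L}^q$ term (for $\alpha>0$; the case $\alpha=0$ is trivial); extension by density via \emph{(iii)} completes the argument. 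The dual inequality $\lVert v\rVert_{\dot{\mathrm{H}}^{-\alpha,q'}}\lesssim \lVert v\rVert_{\mathrm{L}^{r'}}$ follows from \emph{(ii)} and Hölder's inequality applied in the integration pairing. The same dilation argument applied to Proposition~\ref{prop:PropertiesinhomSobolevSpaces}(v) establishes \emph{(v)} in the range $\alpha\in[0,1/q)$, crucially using $\mathbbm{1}_{\mathbb{R}_+}(\cdot)u(\lambda\cdot)=(\mathbbm{1}_{\mathbb{R}_+}u)_\lambda$ for $\lambda>0$. For $\alpha\in(-1+1/q,0)$, I invoke duality: for $u,v\in\eus{S}_0$,
\begin{align*}
|\langle \mathbbm{1}_{\mathbb{R}_+}u,v\rangle| = |\langle u,\mathbbm{1}_{\mathbb{R}_+}v\rangle| \lesssim \lVert u\rVert_{\dot{\mathrm{H}}^{\alpha,q}}\lVert v\rVert_{\dot{\mathrm{H}}^{-\alpha,q'}}
\end{align*}
by the already-established case at the positive exponent $-\alpha\in(0,1/q')$ on the dual (also UMD) space $X^\ast$, and conclude via \emph{(ii)}.

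The main obstacle I anticipate is the reconciliation between the concrete realizations given in the definition and the abstract completion of $\eus{S}_0$, especially in the regime $\alpha\leqslant 0$ where a Cauchy sequence under $\lVert\cdot\rVert_{\dot{\mathrm{H}}^{\alpha,q}}$ need not be Cauchy under $\lVert\cdot\rVert_{\mathrm{H}^{\alpha,q}}$; this will be handled uniformly by routing everything through the isometric isomorphism to $\mathrm{L}^q(\mathbb{R},X)$ from \emph{(i)}, which sidesteps the issue.
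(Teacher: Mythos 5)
Your proposal is correct and follows essentially the same route as the paper's (very terse) proof: reduce everything to the case $\alpha=0$ via the tautological isometry $(-\partial_x^2)^{\beta/2}$ on the dense class $\eus{S}_0(\mathbb{R},X)$, obtain reflexivity and the duality statement by transporting the $\mathrm{L}^q$--$\mathrm{L}^{q'}$ pairing, deduce the embeddings of \emph{(iv)} from the inhomogeneous Proposition~\ref{prop:PropertiesinhomSobolevSpaces} plus duality, and prove \emph{(v)} by a dilation argument for $\alpha\geqslant 0$ and by duality through \emph{(ii)} for $\alpha<0$. The only differences are cosmetic (you prove \emph{(i)} before \emph{(iii)}, and your extra dilation step in \emph{(iv)} is not needed since Proposition~\ref{prop:PropertiesinhomSobolevSpaces}\,(iv) is already stated with the homogeneous right-hand side), and your explicit handling of the realization-versus-completion issue via the isometry onto $\mathrm{L}^q(\mathbb{R},X)$ is consistent with, indeed slightly more careful than, the paper's argument.
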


\begin{proof} Point \textit{(iii)} follows from the definition. Point \textit{(i)} is straightforward by density of $\eus{S}_0(\mathbb{R},X)$. Point \textit{(ii)} is a direct consequence of the corresponding results for $\alpha=0$, thanks to the point \textit{(i)}. Point \textit{(iv)} follows from the definition, the corresponding point in Proposition \ref{prop:PropertiesinhomSobolevSpaces} and a duality argument provided by the previous point \textit{(ii)}. Point \textit{(v)} follows from points \textit{(iii)} and \textit{(v)} in Proposition \ref{prop:PropertiesinhomSobolevSpaces} and a dilation argument when $\alpha\geqslant 0$, see e.g. \cite[Proposition~2.15]{Gaudin2022}. The case $\alpha<0$ follows by duality thanks to the current point \textit{(ii)}.
\end{proof}

Let us start the construction of corresponding function spaces on the half-line.

\begin{definition} Let $q\in(1,+\infty)$, $\alpha\in\mathbb{R}$, $\mathfrak{h}\in\{\mathrm{H},\dot{\mathrm{H}}\}$. We assume assume moreover that $\alpha<1/q$ when $\mathfrak{h}=\dot{\mathrm{H}}$.
We define by restriction, in the sense of distributions, the normed vector space
\begin{align*}
    \mathfrak{h}^{\alpha,q}(\mathbb{R}_+,X) :=  \mathfrak{h}^{\alpha,q}(\mathbb{R},X)_{|_{\mathbb{R}_+}}\text{.}
\end{align*}
This is a Banach space with respect to the quotient norm
\begin{align*}
    \lVert u \rVert_{\mathfrak{h}^{\alpha,q}(\mathbb{R}_+,X)} := \inf_{\substack{U_{|_{\mathbb{R}_+}}=u\text{,}\\ U\in \mathfrak{h}^{\alpha,q}(\mathbb{R},X)\text{.}}} \lVert U\rVert_{\mathfrak{h}^{\alpha,q}(\mathbb{R},X)}\text{.}
\end{align*}
\end{definition}

\begin{proposition}\label{prop:PropertiesSobolevSpacesR+}Let $q\in(1,+\infty)$, $\alpha\in\mathbb{R}$, $\mathfrak{h}\in\{\mathrm{H},\dot{\mathrm{H}}\}$. We assume assume moreover that $\alpha<1/q$ when $\mathfrak{h}=\dot{\mathrm{H}}$. The following properties hold :
\begin{enumerate}
    \item ${\mathfrak{h}}^{\alpha,q}(\mathbb{R}_+,X)$ is a reflexive Banach space, for which $\eus{S}_0(\mathbb{R},X)_{|_{\mathbb{R}_+}}$ is a dense subspace;
    \item provided $\alpha \in (-1+1/q,1/q)$, for all $u\in {\mathfrak{h}}^{\alpha,q}(\mathbb{R}_+,X)$, the extension of $u$ to the whole line by $0$ denoted by $\Tilde{u}$ yields an element of ${\mathfrak{h}}^{\alpha,q}(\mathbb{R},X)$
    \begin{align*}
        \lVert \Tilde{u}\rVert_{{\mathfrak{h}}^{\alpha,q}(\mathbb{R},X)} \sim_{\alpha,q,X} \lVert u\rVert_{{\mathfrak{h}}^{\alpha,q}(\mathbb{R}_+,X)}\text{;}
    \end{align*}
    \item provided $\alpha \in [0,1/q)$, $\frac{1}{r}=\frac{1}{q}-\alpha$, for all $u\in {\mathfrak{h}}^{\alpha,q}(\mathbb{R}_+,X)$, $v\in {\mathrm{L}}^{r'}(\mathbb{R}_+,X)$,
    \begin{align*}
        &\lVert u\rVert_{\mathrm{L}^{r}(\mathbb{R}_+,X)} \lesssim_{\alpha,q,X} \lVert u\rVert_{{\mathfrak{h}}^{\alpha,q}(\mathbb{R}_+,X)}\text{,}\\
        &\lVert v\rVert_{{\mathfrak{h}}^{-\alpha,q'}(\mathbb{R}_+,X)} \lesssim_{\alpha,q,X} \lVert v\rVert_{\mathrm{L}^{r'}(\mathbb{R}_+,X)}\text{;}
    \end{align*}
    \item for all $\alpha \in (-1+1/q,1/q)$, the subspace $\mathrm{C}_c^\infty(\mathbb{R}_+,X)$ is dense in ${\mathfrak{h}}^{\alpha,q}(\mathbb{R}_+,X)$;
    \item whenever $\alpha\in(-1+1/q,1/q)$,
    \begin{align*}
        ({\mathfrak{h}}^{\alpha,q}(\mathbb{R}_+,X))^\ast = {\mathfrak{h}}^{-\alpha,q'}(\mathbb{R}_+,X^\ast)\text{;}
    \end{align*}
    \item provided $\alpha \in [0,1/q)$,  for all $u\in {\mathrm{H}}^{\alpha,q}(\mathbb{R}_+,X)$, 
    \begin{align*}
        \lVert u\rVert_{\mathrm{L}^{q}(\mathbb{R}_+,X)} + \lVert u\rVert_{\dot{\mathrm{H}}^{\alpha,q}(\mathbb{R}_+,X)} \sim_{\alpha,q,X} \lVert u\rVert_{{\mathrm{H}}^{\alpha,q}(\mathbb{R}_+,X)}\text{;}
    \end{align*}
\end{enumerate}
\end{proposition}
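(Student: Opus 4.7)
The plan is to establish the six points largely by reducing each to the corresponding property on the full line, where Propositions \ref{prop:PropertiesinhomSobolevSpaces} and \ref{prop:PropertiesHomSobolevSpaces} apply.

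For point \textit{(i)}, I would observe that ${\mathfrak{h}}^{\alpha,q}(\mathbb{R}_+,X)$ is by construction the quotient of the reflexive Banach space ${\mathfrak{h}}^{\alpha,q}(\mathbb{R},X)$ by the closed subspace $K$ of elements whose distributional restriction to $\mathbb{R}_+$ vanishes, so reflexivity and the Banach property are automatic. Density of $\eus{S}_0(\mathbb{R},X)_{|_{\mathbb{R}_+}}$ then follows by composing the quotient map with the density statements of Propositions \ref{prop:PropertiesinhomSobolevSpaces}\textit{(ii)} and \ref{prop:PropertiesHomSobolevSpaces}\textit{(iii)}. Point \textit{(ii)} is a direct rewriting: for any extension $U\in{\mathfrak{h}}^{\alpha,q}(\mathbb{R},X)$ of $u$, the function $\mathbbm{1}_{\mathbb{R}_+}U$ equals $\Tilde{u}$, so the multiplier estimates Propositions \ref{prop:PropertiesinhomSobolevSpaces}\textit{(v)} and \ref{prop:PropertiesHomSobolevSpaces}\textit{(v)} give $\lVert\Tilde{u}\rVert_{{\mathfrak{h}}^{\alpha,q}(\mathbb{R},X)}\lesssim \lVert U\rVert_{{\mathfrak{h}}^{\alpha,q}(\mathbb{R},X)}$; infimizing over $U$ produces one direction, while the converse is trivial since $\Tilde{u}$ is itself an admissible extension.

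For point \textit{(iii)}, starting from $u\in{\mathfrak{h}}^{\alpha,q}(\mathbb{R}_+,X)$ and any extension $U$, I would apply the Sobolev embedding of Propositions \ref{prop:PropertiesinhomSobolevSpaces}\textit{(iv)} and \ref{prop:PropertiesHomSobolevSpaces}\textit{(iv)} to $U$ on $\mathbb{R}$, then restrict to $\mathbb{R}_+$ and infimize over $U$. The dual embedding for $v\in\mathrm{L}^{r'}(\mathbb{R}_+,X)$ would be obtained by viewing $v$ (or rather $\Tilde{v}$) as a distribution, again using Propositions \ref{prop:PropertiesinhomSobolevSpaces}\textit{(iv)} and \ref{prop:PropertiesHomSobolevSpaces}\textit{(iv)} on $\mathbb{R}$ and the quotient inequality. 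Point \textit{(vi)} is then immediate: taking $\Tilde{u}$ as the distinguished extension (legal because $[0,1/q)\subset(-1+1/q,1/q)$), apply Proposition \ref{prop:PropertiesinhomSobolevSpaces}\textit{(iii)} on $\mathbb{R}$, then use point \textit{(ii)} to convert the full line norms of $\Tilde{u}$ back into half-line norms of $u$.

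Point \textit{(v)} is the quotient duality: writing ${\mathfrak{h}}^{\alpha,q}(\mathbb{R}_+,X) = {\mathfrak{h}}^{\alpha,q}(\mathbb{R},X)/K$ with $K$ as above, one has $({\mathfrak{h}}^{\alpha,q}(\mathbb{R}_+,X))^{\ast} \simeq K^{\perp}\subset {\mathfrak{h}}^{-\alpha,q'}(\mathbb{R},X^\ast)$ via Propositions \ref{prop:PropertiesinhomSobolevSpaces}\textit{(i)} and \ref{prop:PropertiesHomSobolevSpaces}\textit{(ii)}. I would identify $K^{\perp}$ with the subspace of elements of ${\mathfrak{h}}^{-\alpha,q'}(\mathbb{R},X^\ast)$ distributionally supported in $[0,+\infty)$, since $K$ is the space of elements supported in $(-\infty,0]$ (using that $\mathrm{C}_c^\infty((-\infty,0),X)$ is dense in $K$, a fact that itself follows from the density already obtained in \textit{(i)} after reflection). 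Since the condition $\alpha\in(-1+1/q,1/q)$ is symmetric in the duality, namely $-\alpha\in(-1+1/q',1/q')$, point \textit{(ii)} applied in the dual scale identifies this subspace isometrically with ${\mathfrak{h}}^{-\alpha,q'}(\mathbb{R}_+,X^\ast)$ via extension by zero.

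The main obstacle is point \textit{(iv)}. Given $u\in{\mathfrak{h}}^{\alpha,q}(\mathbb{R}_+,X)$, I would first use \textit{(ii)} to work with $\Tilde{u}\in{\mathfrak{h}}^{\alpha,q}(\mathbb{R},X)$. The idea is then a shift-then-mollify-then-truncate argument: for $\delta>0$, set $\Tilde{u}_\delta:=\tau_\delta \Tilde{u}$, which is supported in $[\delta,+\infty)$ and converges to $\Tilde{u}$ in ${\mathfrak{h}}^{\alpha,q}(\mathbb{R},X)$ by strong continuity of translations (itself a consequence of density of $\eus{S}_0(\mathbb{R},X)$). Convolving with a mollifier $\rho_\epsilon$ of support in $(-\epsilon,\epsilon)$ for $\epsilon<\delta/2$ produces a smooth function supported in $[\delta/2,+\infty)$, converging to $\Tilde{u}_\delta$ as $\epsilon\to 0$. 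Finally, multiplying by a smooth cutoff $\chi(t/N)$, $\chi\in\mathrm{C}_c^\infty(\mathbb{R})$ with $\chi\equiv 1$ near $0$, and letting $N\to+\infty$, yields elements of $\mathrm{C}_c^\infty(\mathbb{R}_+,X)$ that approximate $\Tilde{u}$ in ${\mathfrak{h}}^{\alpha,q}(\mathbb{R},X)$; restricting to $\mathbb{R}_+$ and using \textit{(ii)} again concludes. The subtlety is that each of the three limits (translation, mollification, truncation) must be justified in the ${\mathfrak{h}}^{\alpha,q}$-topology; translations and mollifications are handled by Fourier multiplier boundedness, while the cutoff step needs an $N$-independent pointwise multiplier bound, obtained by decomposing the cutoff into a Schwartz perturbation of the identity.
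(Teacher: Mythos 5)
Most of your points follow the same reduce-to-the-line strategy as the paper: (i), (ii), (iii) and (vi) are handled exactly as in the text (quotient structure, the indicator-multiplier bound of Propositions \ref{prop:PropertiesinhomSobolevSpaces}\textit{(v)} and \ref{prop:PropertiesHomSobolevSpaces}\textit{(v)}, and Proposition \ref{prop:PropertiesinhomSobolevSpaces}\textit{(iii)} combined with point \textit{(ii)}). Your point \textit{(v)} via the identification $({\mathfrak{h}}^{\alpha,q}(\mathbb{R},X)/K)^\ast\simeq K^{\perp}$ is a legitimate variant of the paper's route (which reproduces the scalar argument of the cited reference using \textit{(iv)} and the case $\alpha=0$); note however that the density of $\mathrm{C}_c^\infty((-\infty,0),X)$ in $K$ does not follow from \textit{(i)} "after reflection" as you claim, but from the reflected versions of \textit{(ii)} and \textit{(iv)} together with a no-mass-at-$\{0\}$ argument — so your \textit{(v)} is logically downstream of your \textit{(iv)}.

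The genuine gap is in \textit{(iv)}. Your translate--mollify--truncate scheme hinges on the cutoff step: you need $\chi(\cdot/N)w\to w$ in $\dot{\mathrm{H}}^{\alpha,q}(\mathbb{R},X)$ for a function $w$ that is merely smooth, supported in $[\delta/2,+\infty)$ and lies in the homogeneous space (it is in general not in $\mathrm{L}^q$ nor in ${\mathrm{H}}^{\alpha,q}$, so you cannot dominate by the inhomogeneous norm). The standard way to get this is uniform boundedness of the multipliers $M_{\chi(\cdot/N)}$ plus convergence on the dense class $\eus{S}_0$; by dilation conjugation the uniform bound reduces to boundedness of multiplication by a single $\chi\in\mathrm{C}_c^\infty$ on $\dot{\mathrm{H}}^{\alpha,q}(\mathbb{R},X)$, and this pointwise-multiplier estimate for the vector-valued \emph{homogeneous} scale (including negative $\alpha$) is proved nowhere in the paper and not by you: your "Schwartz perturbation of the identity" does not work as stated, since $1-\chi(\cdot/N)$ is not Schwartz and bounding it is exactly the same homogeneous multiplier problem you are trying to solve (only $\mathbbm{1}_{\mathbb{R}_+}$ is available as a multiplier, via Proposition \ref{prop:PropertiesHomSobolevSpaces}\textit{(v)}). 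The paper avoids this entirely: for $\alpha\leqslant 0$ it uses that $\mathrm{L}^{r}(\mathbb{R}_+,X)$ embeds continuously and densely into ${\mathfrak{h}}^{\alpha,q}(\mathbb{R}_+,X)$ (points \textit{(i)} and \textit{(iii)}) and approximates in the Lebesgue norm, while for $\alpha>0$ it quotes the known inhomogeneous density result and transfers it through the continuous dense embedding ${\mathrm{H}}^{\alpha,q}(\mathbb{R}_+,X)\hookrightarrow\dot{\mathrm{H}}^{\alpha,q}(\mathbb{R}_+,X)$. Either supply a proof of the homogeneous cutoff-multiplier bound (e.g.\ by paraproduct or interpolation arguments not developed here) or switch to the paper's two-case reduction.
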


\begin{proof} Point \textit{(i)} follows from the definition of function spaces by restriction, and the properties for their counterparts on $\mathbb{R}$. The point \textit{(ii)} is a direct consequence of point \textit{(v)} from both Propositions~\ref{prop:PropertiesinhomSobolevSpaces}~and~\ref{prop:PropertiesHomSobolevSpaces}.
Point \textit{(iii)} follows from the from the definition of function spaces by restriction and the corresponding result in Propositions~\ref{prop:PropertiesinhomSobolevSpaces}~and~\ref{prop:PropertiesHomSobolevSpaces}.

The point \textit{(iv)} for $\alpha\leqslant0$ follows from point \textit{(i)} and \textit{(iii)}: indeed, both yields that $\mathrm{L}^{r}(\mathbb{R}_+,X)$ is dense in $\mathrm{H}^{\alpha,q}(\mathbb{R}_+,X)$, therefore it suffices to approximate functions in $\mathrm{L}^r(\mathbb{R}_+,X)$ by ones in $\mathrm{C}_c^\infty(\mathbb{R}_+,X)$. For $\alpha> 0$, the inhomogeneous case is known to be true, see for instance \cite[Proposition~6.4]{LindemulderMeyriesVeerar2018}. The case of homogeneous function space follows since by \textit{(i)}, and by construction, ${\mathrm{H}}^{\alpha,q}(\mathbb{R}_+,X)$ embeds continuously and densely in $\dot{\mathrm{H}}^{\alpha,q}(\mathbb{R}_+,X)$.

For the point \textit{(v)}, one may use the \textit{(iv)} and the case $\alpha=0$ in order to reproduce the proof as in the scalar case, e.g. one may reproduce the proof of \cite[Proposition~2.27]{Gaudin2022}.

Finally, the point \textit{(vi)} can be proved by the mean of current point \textit{(ii)} and the point \textit{(iii)} from Proposition \ref{prop:PropertiesinhomSobolevSpaces}.
\end{proof}

The next lemma is nothing but the Hardy-Sobolev inequality in the vector valued setting with homogeneous estimate. Its proof is left to the reader and use a complex interpolation argument allowed by \cite[Theorem~6.7]{LindemulderMeyriesVeerar2018}, then dilation and density arguments by the mean of the points \textit{(iv)} and \textit{(vi)} of Proposition \ref{prop:PropertiesSobolevSpacesR+}.

\begin{lemma}\label{lem:HardySobolevInequality} Let $q\in(1,+\infty)$, $\alpha\in[0,1/q)$. For all $u\in \dot{\mathrm{H}}^{\alpha,q}(\mathbb{R}_+,X)$ the following inequality holds
\begin{align*}
    \left\lVert \tau \mapsto \frac{u(\tau)}{\tau^\alpha}\right\rVert_{\mathrm{L}^q(\mathbb{R}_+,X)} \lesssim_{\alpha,q} \lVert u\rVert_{\dot{\mathrm{H}}^{\alpha,q}(\mathbb{R}_+,X)}\text{.}
\end{align*}
\end{lemma}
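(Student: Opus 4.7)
The plan is to follow the recipe sketched in the hint: reduce to smooth compactly supported functions by density, establish an inhomogeneous Hardy-Sobolev inequality by complex interpolation between two easy endpoints, and then upgrade to the homogeneous statement by a scaling argument. By point~\textit{(iv)} of Proposition~\ref{prop:PropertiesSobolevSpacesR+}, $\mathrm{C}_c^\infty(\mathbb{R}_+,X)$ is dense in $\dot{\mathrm{H}}^{\alpha,q}(\mathbb{R}_+,X)$, so I fix $u\in\mathrm{C}_c^\infty(\mathbb{R}_+,X)$---for which $\tau\mapsto u(\tau)/\tau^\alpha$ is compactly supported away from $0$, so the left-hand side is unambiguous---and extend the final inequality by density.

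The first goal is the inhomogeneous version
\[
    \bigl\lVert \tau\mapsto u(\tau)/\tau^\alpha\bigr\rVert_{\mathrm{L}^q(\mathbb{R}_+,X)} \lesssim_{\alpha,q}\lVert u\rVert_{\mathrm{H}^{\alpha,q}(\mathbb{R}_+,X)}\text{,}
\]
which I would obtain via Stein's complex interpolation applied to the analytic family $T_z u := \tau^{-z}u$ on the strip $0\leqslant\mathrm{Re}(z)\leqslant 1$. On $\mathrm{Re}(z)=0$, the identity $\lVert T_{is}u(\tau)\rVert_X=\lVert u(\tau)\rVert_X$ makes $T_{is}$ a contraction $\mathrm{L}^q(\mathbb{R}_+,X)\to\mathrm{L}^q(\mathbb{R}_+,X)$ uniformly in $s\in\mathbb{R}$. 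On $\mathrm{Re}(z)=1$, the vector-valued Hardy inequality---obtained by applying the scalar Hardy inequality to $\tau\mapsto\lVert u(\tau)\rVert_X\leqslant\int_0^\tau\lVert u'(s)\rVert_X\,\mathrm{d}s$---shows that $T_{1+is}:\mathrm{H}^{1,q}(\mathbb{R}_+,X)\to\mathrm{L}^q(\mathbb{R}_+,X)$ is uniformly bounded in $s$. Combining Stein's theorem with the complex interpolation identity $[\mathrm{L}^q(\mathbb{R}_+,X),\mathrm{H}^{1,q}(\mathbb{R}_+,X)]_\alpha=\mathrm{H}^{\alpha,q}(\mathbb{R}_+,X)$ furnished by \cite[Theorem~6.7]{LindemulderMeyriesVeerar2018}---which uses precisely the UMD assumption and the BIP of the time derivative---yields the claim for every $\alpha\in[0,1)$.

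For the upgrade, I would dilate: for $\lambda>0$, set $u_\lambda(\tau):=u(\lambda\tau)$. Direct substitution and the $\alpha$-homogeneity of $(-\partial_x^2)^{\alpha/2}$ give $\lVert u_\lambda/\tau^\alpha\rVert_{\mathrm{L}^q}=\lambda^{\alpha-1/q}\lVert u/\tau^\alpha\rVert_{\mathrm{L}^q}$, $\lVert u_\lambda\rVert_{\mathrm{L}^q}=\lambda^{-1/q}\lVert u\rVert_{\mathrm{L}^q}$ and $\lVert u_\lambda\rVert_{\dot{\mathrm{H}}^{\alpha,q}}=\lambda^{\alpha-1/q}\lVert u\rVert_{\dot{\mathrm{H}}^{\alpha,q}}$. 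Plugging $u_\lambda$ into the inhomogeneous inequality, invoking the splitting $\lVert \cdot\rVert_{\mathrm{H}^{\alpha,q}}\sim\lVert \cdot\rVert_{\mathrm{L}^q}+\lVert \cdot\rVert_{\dot{\mathrm{H}}^{\alpha,q}}$ from point~\textit{(vi)} of Proposition~\ref{prop:PropertiesSobolevSpacesR+}, and dividing through by $\lambda^{\alpha-1/q}$ yields $\lVert u/\tau^\alpha\rVert_{\mathrm{L}^q}\lesssim\lambda^{-\alpha}\lVert u\rVert_{\mathrm{L}^q}+\lVert u\rVert_{\dot{\mathrm{H}}^{\alpha,q}}$. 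Sending $\lambda\to+\infty$ kills the first term when $\alpha>0$; the case $\alpha=0$ is trivial, and density closes the argument.

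The main obstacle will be the complex interpolation step: one must verify that $(T_z)$ has admissible growth in $\lvert\mathrm{Im}(z)\rvert$ for Stein's theorem (this is painless on $\mathrm{C}_c^\infty(\mathbb{R}_+,X)$ since $\tau^{-z}u$ is pointwise dominated by a polynomial in $\lvert\mathrm{Im}(z)\rvert$ on the compact support of $u$), and that the interpolation identity of \cite[Theorem~6.7]{LindemulderMeyriesVeerar2018}, originally stated on $\mathbb{R}$, transfers to $\mathbb{R}_+$. The latter is handled by the quotient definition of $\mathrm{H}^{\alpha,q}(\mathbb{R}_+,X)$ combined with the extension-by-zero estimate in point~\textit{(ii)} of Proposition~\ref{prop:PropertiesSobolevSpacesR+}.
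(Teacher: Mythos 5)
Your overall route — density via point \textit{(iv)} of Proposition \ref{prop:PropertiesSobolevSpacesR+}, a complex (Stein) interpolation between a trivial $\mathrm{L}^q$ endpoint and a Hardy-inequality endpoint using \cite[Theorem~6.7]{LindemulderMeyriesVeerar2018}, then dilation with point \textit{(vi)} and a limiting argument — is exactly the intended one. However, there is a genuine flaw in the endpoint you state on $\mathrm{Re}(z)=1$: the bound $T_{1+is}\,:\,\mathrm{H}^{1,q}(\mathbb{R}_+,X)\to\mathrm{L}^q(\mathbb{R}_+,X)$ is \emph{false}. The pointwise inequality $\lVert u(\tau)\rVert_X\leqslant\int_0^\tau\lVert u'(s)\rVert_X\,\mathrm{d}s$, hence Hardy's inequality, requires $u(0)=0$; for $u$ smooth, compactly supported and equal to a nonzero vector near $\tau=0$ one has $u\in\mathrm{H}^{1,q}(\mathbb{R}_+,X)$ while $\tau\mapsto u(\tau)/\tau\notin\mathrm{L}^q$ near $0$ since $q>1$. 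This is not cured by working on your dense class: $\mathrm{C}_c^\infty((0,+\infty),X)$ is dense in $\mathrm{H}^{\alpha,q}(\mathbb{R}_+,X)$ only for $\alpha<1/q$, and its closure in $\mathrm{H}^{1,q}(\mathbb{R}_+,X)$ is the proper subspace $\mathrm{H}^{1,q}_0(\mathbb{R}_+,X)$, so Stein's theorem cannot be applied to the couple $(\mathrm{L}^q(\mathbb{R}_+,X),\mathrm{H}^{1,q}(\mathbb{R}_+,X))$ with the identity $[\mathrm{L}^q,\mathrm{H}^{1,q}]_\alpha=\mathrm{H}^{\alpha,q}$ as you invoke it: the hypothesis at the right endpoint is simply not satisfied on that couple.

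The repair is minor and is precisely what \cite[Theorems~6.7~\&~6.8]{LindemulderMeyriesVeerar2018} are designed for: take the right endpoint to be $\mathrm{H}^{1,q}_0(\mathbb{R}_+,X)$ (zero trace), on which the vector-valued Hardy inequality does give $\lVert T_{1+is}u\rVert_{\mathrm{L}^q}\lesssim_q\lVert u\rVert_{\mathrm{H}^{1,q}}$ uniformly in $s$, and use the complex interpolation identity \emph{with Dirichlet boundary condition} $[\mathrm{L}^q(\mathbb{R}_+,X),\mathrm{H}^{1,q}_0(\mathbb{R}_+,X)]_\alpha=\mathrm{H}^{\alpha,q}_0(\mathbb{R}_+,X)$, combined with the fact that $\mathrm{H}^{\alpha,q}_0(\mathbb{R}_+,X)=\mathrm{H}^{\alpha,q}(\mathbb{R}_+,X)$ with equivalent norms when $0\leqslant\alpha<1/q$ (this is the same identification used in the paper when describing the domain of $(\partial_t)^\alpha$). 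With that substitution the Stein argument is legitimate ($\mathrm{C}_c^\infty((0,+\infty),X)$ \emph{is} dense in $\mathrm{H}^{1,q}_0$), and the remainder of your proof — the scaling computation for $u_\lambda$, the splitting of the inhomogeneous norm via point \textit{(vi)}, letting $\lambda\to+\infty$ for $\alpha>0$, and the density step (which should be closed by a.e.\ convergence along a subsequence through the $\mathrm{L}^r$ embedding of point \textit{(iii)} plus Fatou) — is correct and matches the paper's sketch.
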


\subsection{The derivative on the half-line.}

We will not discuss here the construction and properties of inhomogeneous Bessel potential spaces $\mathrm{H}^{s,p}(\mathbb{R}_+,X)$  for $s\in\mathbb{R}$, $p\in(1,+\infty)$ and the meaning of traces at $0$. Therefore as in the previous subsection, we refer 
 to \cite{MeyriesVeraar2012,LindemulderMeyriesVeerar2018,ScharfSchmeißerSickel2011} for more details.

We recall that one may define the unbounded operator $\frac{\mathrm{d}}{\mathrm{d}t}$ on $\mathrm{L}^q(\mathbb{R}_+,X)$, also denoted by  $\partial_t$, with domain,
\begin{align*}
    \mathrm{D}_{q}(\partial_t):=\mathrm{H}^{1,q}_0(\mathbb{R}_+,X):= \{ f\in \mathrm{L}^q(\mathbb{R}_+,X)\,|\, \partial_t f \in \mathrm{L}^q(\mathbb{R}_+,X)\text{, } f(0)=0\}\text{. }
\end{align*}
Then, thanks to \cite[Theorem~3.1]{DoreVenni1987} (see also \cite[Sections~8.4,~8.5]{bookHaase2006}, \cite[Theorem~6.8]{LindemulderMeyriesVeerar2018} or \cite[Theorem~4.3.14]{PrussSimonett2016}), $\frac{\mathrm{d}}{\mathrm{d}t}$ is an injective sectorial operator on $\mathrm{L}^q(\mathbb{R}_+,X)$ which admits bounded imaginary powers, satisfying, for all $s\in\mathbb{R}$
\begin{align}
    \left\lVert (\partial_t)^{is}\right\rVert_{\mathrm{L}^q(\mathbb{R}_+,X)\rightarrow\mathrm{L}^q(\mathbb{R}_+,X)} \lesssim_{q,X} (1+s^2)e^{\frac{\pi}{2}|s|}\text{,}
\end{align}
implying that $(\partial_t)^{\alpha}$, $\alpha\in[0,1]$, is injective with domain $\mathrm{H}^{\alpha,q}_0(\mathbb{R}_+,X)$, $\alpha\neq \frac{1}{q}$, see \cite[Theorems~6.7~\&~6.8]{LindemulderMeyriesVeerar2018}, and we have an isomorphism, provided $\alpha \in(-1+1/q,1/q)$,
\begin{align}\label{eq:isomDirTimeDeriv}
    (\partial_t)^{\alpha}\,:\,\dot{\mathrm{H}}^{\alpha,q}(\mathbb{R}_+,X)\longrightarrow {\mathrm{L}}^{q}(\mathbb{R}_+,X)\text{. }
\end{align}

For $\beta \in (0,1)$, $\alpha\in[\beta,1]$, $\alpha\neq 1/q$, $\gamma\in[\beta,\alpha]$, the following representation formula holds for all $f\in (\partial_t)^\beta\mathrm{H}^{\alpha,q}_0(\mathbb{R}_+,X)$,
\begin{align}\label{eq:RepFormulaDirichletFracTimeDeriv}
    (\partial_t)^{-\beta}f(t) = \frac{1}{\Gamma(\gamma)} \int_{0}^{t} \frac{1}{(t-\tau)^{1-\gamma}} (\partial_t)^{\gamma-\beta}f(\tau) \,\mathrm{d}\tau, t>0.
\end{align}
Above formula remains true for $f\in \dot{\mathrm{H}}^{\alpha-\beta,q}(\mathbb{R}_+,X)$, provided $\alpha,\alpha-\beta< 1/q$.

Similarly, the "dual" operator $-\frac{\mathrm{d}}{\mathrm{d}t}$, with domain $\mathrm{D}_{q}(-\partial_t):=\mathrm{H}^{1,q}(\mathbb{R}_+,X)$, is an injective sectorial operator on $\mathrm{L}^q(\mathbb{R}_+,X)$ which admits bounded imaginary powers. For all $s\in\mathbb{R}$
\begin{align}
    \left\lVert (-\partial_t)^{is}\right\rVert_{\mathrm{L}^q(\mathbb{R}_+,X)\rightarrow\mathrm{L}^q(\mathbb{R}_+,X)} \lesssim_{q,X} (1+s^2)e^{\frac{\pi}{2}|s|}\text{,}
\end{align}
which also implies injectivity of $(-\partial_t)^\alpha$, $\alpha\in[0,1]$, with domain ${\mathrm{H}}^{\alpha,q}(\mathbb{R}_+,X)$.

For $\alpha \in(-1+1/q,1/q)$, we still have an isomorphism 
\begin{align}\label{eq:isomNeuTimeDeriv}
    (-\partial_t)^{\alpha}\,:\,\dot{\mathrm{H}}^{\alpha,q}(\mathbb{R}_+,X)\longrightarrow {\mathrm{L}}^{q}(\mathbb{R}_+,X)\text{. }
\end{align}

For $\beta \in (0,1)$, $\alpha\in[\beta,1]$, $\gamma\in[\beta,\alpha]$, the following representation formula holds for all $f\in (-\partial_t)^{\beta}\mathrm{H}^{\alpha,q}(\mathbb{R}_+,X)$,
\begin{align}\label{eq:RepFormulaNeumFracMinusTimeDeriv}
    (-\partial_t)^{-\beta}f(t) = \frac{1}{\Gamma(\gamma)} \int_{t}^{+\infty} \frac{1}{(\tau-t)^{1-\gamma}} (-\partial_t)^{\gamma-\beta}f(\tau) \,\mathrm{d}\tau, t>0.
\end{align}
Above formula remains true for $f\in \dot{\mathrm{H}}^{\alpha-\beta,q}(\mathbb{R}_+,X)$, provided $\alpha,\alpha-\beta< 1/q$.

More details about the functional analytic properties of operators $\partial_t$ and $-\partial_t$ can also be found in \cite[Section~3.2]{PrussSimonett2016} and \cite[Section~6]{LindemulderMeyriesVeerar2018}, where the case of (power-)weighted, but inhomogeneous, Sobolev spaces have been widely treated.

\subsection{A comment for homogeneous Sobolev spaces and the time derivative on a finite interval}

We finish this section with a discussion about Sobolev space on $(0,T)$ (or $[0,T]$), $T>0$, and the related derivative operators. On can define those space similarly.

\begin{definition} Let $q\in(1,+\infty)$, $-1+1/q<\alpha<1/q$ when $\mathfrak{h}=\dot{\mathrm{H}}$.
We define by restriction, in the sense of distributions, the normed vector space
\begin{align*}
    \mathfrak{h}^{\alpha,q}((0,T),X) :=  \mathfrak{h}^{\alpha,q}(\mathbb{R},X)_{|_{(0,T)}}\text{.}
\end{align*}
with the induced quotient norm.
\end{definition}

But since $\mathbbm{1}_{(0,T)}= \mathbbm{1}_{(0,+\infty)}-\mathbbm{1}_{\mathbb{R}_+}(\cdot-T)$, we obtain

\begin{proposition}\label{prop:PropertiesSobolevSpaces0T}Let $q\in(1,+\infty)$, $\alpha\in(-1+1/q,1/q)$. The following properties hold :
\begin{enumerate}
    \item $\dot{\mathrm{H}}^{\alpha,q}((0,T),X)$ is a reflexive Banach space, for which $\eus{S}_0(\mathbb{R},X)_{|_{(0,T)}}$ is a dense subspace;
    \item for all $u\in \dot{\mathrm{H}}^{\alpha,q}((0,T),X)$, the extension of $u$ to the whole line denoted $\Tilde{u}$, is such that
    \begin{align*}
        \lVert \Tilde{u}\rVert_{\dot{\mathrm{H}}^{\alpha,q}(\mathbb{R}_+,X)}+\lVert \Tilde{u}\rVert_{{\mathrm{H}}^{\alpha,q}(\mathbb{R},X)} \lesssim_{\alpha,q,X,T} \lVert u\rVert_{\dot{\mathrm{H}}^{\alpha,q}((0,T),X)}\text{.}
    \end{align*}
    \item $\dot{\mathrm{H}}^{\alpha,q}((0,T),X)={\mathrm{H}}^{\alpha,q}((0,T),X)$ with equivalence of norms (depending on $T$).
\end{enumerate}
\end{proposition}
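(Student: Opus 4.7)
The proof is driven by the pointwise identity $\mathbbm{1}_{(0,T)} = \mathbbm{1}_{\mathbb{R}_+} - \mathbbm{1}_{\mathbb{R}_+}(\cdot - T)$ together with the multiplier bounds of Propositions~\ref{prop:PropertiesinhomSobolevSpaces}(v) and \ref{prop:PropertiesHomSobolevSpaces}(v) and the translation invariance of the norms on $\mathbb{R}$. Point (i) is a direct transfer from $\mathbb{R}$: as a quotient of the reflexive Banach space $\dot{\mathrm{H}}^{\alpha,q}(\mathbb{R},X)$ by the closed kernel of the restriction to $(0,T)$, the space $\dot{\mathrm{H}}^{\alpha,q}((0,T),X)$ is itself reflexive; and the density of $\eus{S}_0(\mathbb{R},X)_{|_{(0,T)}}$ follows by applying the restriction map to the density statement of Proposition~\ref{prop:PropertiesHomSobolevSpaces}(iii), since restriction contracts the quotient norm.

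For point (ii), I would fix $U \in \dot{\mathrm{H}}^{\alpha,q}(\mathbb{R},X)$ extending $u$ with $\lVert U\rVert_{\dot{\mathrm{H}}^{\alpha,q}(\mathbb{R},X)} \leqslant 2\lVert u\rVert_{\dot{\mathrm{H}}^{\alpha,q}((0,T),X)}$ and decompose the zero-extension as $\tilde{u} = \mathbbm{1}_{\mathbb{R}_+} U - [\mathbbm{1}_{\mathbb{R}_+} U(\cdot+T)](\cdot - T)$. Proposition~\ref{prop:PropertiesHomSobolevSpaces}(v), applied to both $U$ and $U(\cdot+T)$ and combined with the translation invariance of the homogeneous norm, then yields $\lVert \tilde{u}\rVert_{\dot{\mathrm{H}}^{\alpha,q}(\mathbb{R},X)} \lesssim \lVert U\rVert$; the $\dot{\mathrm{H}}^{\alpha,q}(\mathbb{R}_+,X)$-bound is immediate since $\tilde{u}$ is supported in $[0,T]\subset \mathbb{R}_+$ and the quotient norm is majorized by the full-line norm. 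To bound $\lVert \tilde{u}\rVert_{\mathrm{H}^{\alpha,q}(\mathbb{R},X)}$ I would split on the sign of $\alpha$: for $\alpha \in [0,1/q)$, Proposition~\ref{prop:PropertiesinhomSobolevSpaces}(iii) reduces matters to controlling $\lVert \tilde{u}\rVert_{L^q}$, and the Sobolev embedding of Proposition~\ref{prop:PropertiesHomSobolevSpaces}(iv) combined with Hölder on the compact support $[0,T]$ produces a factor $T^\alpha$; for $\alpha \in (-1+1/q, 0)$, the smooth symbol $(1+\xi^2)^{\alpha/2}|\xi|^{-\alpha} = (1+|\xi|^{-2})^{\alpha/2}$ is bounded by $1$ and satisfies Mikhlin's condition, so the vector-valued Mikhlin theorem on the UMD space $X$ yields the continuous embedding $\dot{\mathrm{H}}^{\alpha,q}(\mathbb{R},X) \hookrightarrow \mathrm{H}^{\alpha,q}(\mathbb{R},X)$, with a constant independent of $T$.

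Point (iii) splits into two inclusions. The inclusion $\dot{\mathrm{H}}^{\alpha,q}((0,T),X) \hookrightarrow \mathrm{H}^{\alpha,q}((0,T),X)$ is obtained by restricting to $(0,T)$ the extension $\tilde{u}$ produced in (ii). For the reverse, given $u\in \mathrm{H}^{\alpha,q}((0,T),X)$ with extension $U \in \mathrm{H}^{\alpha,q}(\mathbb{R},X)$, the same identity and Proposition~\ref{prop:PropertiesinhomSobolevSpaces}(v) produce $V := \mathbbm{1}_{(0,T)} U \in \mathrm{H}^{\alpha,q}(\mathbb{R},X)$ compactly supported in $[0,T]$ and still extending $u$. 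For $\alpha \geqslant 0$, Proposition~\ref{prop:PropertiesinhomSobolevSpaces}(iii) readily gives $V \in \dot{\mathrm{H}}^{\alpha,q}(\mathbb{R},X)$ with $\lVert V\rVert_{\dot{\mathrm{H}}^{\alpha,q}} \leqslant C\lVert V\rVert_{\mathrm{H}^{\alpha,q}}$, hence $u \in \dot{\mathrm{H}}^{\alpha,q}((0,T),X)$ with a $T$-dependent bound. The main obstacle is the case $\alpha \in (-1+1/q, 0)$: the symbol $|\xi|^\alpha (1+\xi^2)^{-\alpha/2}$ blows up at the origin, so Mikhlin is inapplicable and the compact support of $V$ must be exploited explicitly — either by representing $(-\partial_x^2)^{\alpha/2} V$ as the Riesz-potential convolution $c_\alpha |\cdot|^{-1-\alpha} * V$ and bounding its $L^q$-norm via the decay at infinity (whose finiteness is equivalent to the threshold $\alpha > -1 + 1/q$) together with the local Bessel-to-Riesz comparison for compactly supported functions, or by a dual pairing argument against smoothly truncated test functions in $\dot{\mathrm{H}}^{-\alpha,q'}(\mathbb{R},X^\ast)$.
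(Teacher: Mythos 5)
Your proposal is correct in substance, and it is essentially the argument the paper has in mind: the paper gives no written proof of Proposition \ref{prop:PropertiesSobolevSpaces0T} beyond the sentence containing the identity $\mathbbm{1}_{(0,T)}=\mathbbm{1}_{(0,+\infty)}-\mathbbm{1}_{\mathbb{R}_+}(\cdot-T)$, so the indicator decomposition, the multiplier bounds of Propositions~\ref{prop:PropertiesinhomSobolevSpaces}\,(v) and \ref{prop:PropertiesHomSobolevSpaces}\,(v), and translation invariance are exactly the intended ingredients. What you add, and what the one-line proof glosses over, is that the identity alone does not settle everything: the low-frequency control in (ii) for $\alpha\geqslant 0$ (Sobolev embedding plus H\"older on $[0,T]$, giving the $T^{\alpha}$ factor), the Mikhlin embedding $\dot{\mathrm{H}}^{\alpha,q}(\mathbb{R},X)\hookrightarrow\mathrm{H}^{\alpha,q}(\mathbb{R},X)$ for $\alpha\leqslant 0$, and above all the reverse inclusion $\mathrm{H}^{\alpha,q}((0,T),X)\subset\dot{\mathrm{H}}^{\alpha,q}((0,T),X)$ for $\alpha<0$, which is the only genuinely nontrivial point and which you correctly isolate. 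Of your two suggested remedies for that step, the duality one closes cleanly and with no circularity: for $\phi\in\eus{S}_0(\mathbb{R},X^\ast)$ one has $\langle \mathbbm{1}_{(0,T)}U,\phi\rangle=\langle U,\mathbbm{1}_{(0,T)}\phi\rangle$, your point (ii) applied with exponent $-\alpha\in(0,1/q')$, integrability $q'$ and the UMD space $X^\ast$ gives $\lVert\mathbbm{1}_{(0,T)}\phi\rVert_{\mathrm{H}^{-\alpha,q'}(\mathbb{R},X^\ast)}\lesssim_{T}\lVert\phi\rVert_{\dot{\mathrm{H}}^{-\alpha,q'}(\mathbb{R},X^\ast)}$, and then the duality and density statements of Proposition~\ref{prop:PropertiesHomSobolevSpaces}\,(ii)--(iii) (applied to $(-\alpha,q',X^\ast)$, together with reflexivity of $X$) represent the functional by some $W\in\dot{\mathrm{H}}^{\alpha,q}(\mathbb{R},X)$; since $W$ and $\mathbbm{1}_{(0,T)}U$ both lie in $\mathrm{H}^{\alpha,q}(\mathbb{R},X)$ and agree against $\eus{S}_0$, their difference is a polynomial, hence zero, so $\mathbbm{1}_{(0,T)}U$ is the desired homogeneous extension with a $T$-dependent bound. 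The Riesz-kernel alternative is stated a bit loosely: for $\alpha<0$ the truncation $\mathbbm{1}_{(0,T)}U$ is in general only a compactly supported distribution, not a function, so the far-field decay of $c_\alpha\lvert\cdot\rvert^{-1-\alpha}\ast(\mathbbm{1}_{(0,T)}U)$ must be extracted by pairing the distribution with the smooth translated kernel (a local $\mathrm{H}^{-\alpha,q'}$ bound of $y\mapsto\lvert x-y\rvert^{-1-\alpha}$ near $[0,T]$), and the local Bessel-to-Riesz comparison likewise has to be run at the level of kernels acting on a distribution of negative order; it can be completed, but the duality argument is the one I would write down.
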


From there, and in particular from point \textit{(ii)} of Proposition \ref{prop:PropertiesSobolevSpaces0T}, one may expect that the theory on the half line will carry over the behavior on $(0,T)$ up to extend the elements by $0$, or up to the multiplication by $\mathbbm{1}_{(0,T)}$. And this is indeed, what actually happens for $\partial_t$ and $-\partial_t$ according to \cite[Section~8.5]{bookHaase2006}.


\section{The global-in-time \texorpdfstring{$\dot{\mathrm{H}}^{\alpha,q}$}{Haq}-maximal regularity with homogeneous trace estimate}

Now, we go back to $\mathrm{L}^q$-maximal regularity on a UMD Banach space $X$. We are going to state few minor improvements of above results, the first one is about global-in-time estimates when the initial data $u_0$ lies in the homogeneous space $\mathring{\eus{D}}_{A}(\theta,q)$, provided $\theta\in(0,1)$, $q\in(1,+\infty)$. 

A second goal is to obtain a $\dot{\mathrm{H}}^{\alpha,q}$-maximal regularity result as a variation of above Theorem~\ref{thm:LqMaxRegUMD} where we take the advantage of \cite[Proposition~2.4]{Pruss2002}, and the isomorphism properties \eqref{eq:isomDirTimeDeriv} and \eqref{eq:isomNeuTimeDeriv}. Our proof for the corresponding homogeneous trace estimates is mainly inspired by techniques from the proofs of \cite[Lemma~2.19,~Theorem~2.20]{DanchinHieberMuchaTolk2020}, see also \cite[Section~3.4]{PrussSimonett2016} for similar estimates proven in a similar way.

\subsection{About mild solutions in the context of homogeneous operator theory}

In the literature, it seems difficult to have a clear and definitive mention of what would be the exact meaning of a mild solution of \eqref{ACP} in the context of homogeneous functions spaces with respect the space variable (here the roles are played by $\mathrm{D}(\mathring{A})$ and $\mathring{\eus{D}}_A(\theta,q)$). Here is an attempt.

\begin{definition} Let $\omega\in [0,\frac{\pi}{2})$, $(\mathrm{D}(A),A)$ an $\omega$-sectorial operator on a UMD Banach space $X$, such that it satisfies Assumptions \eqref{asmpt:homogeneousdomaindef} and \eqref{asmpt:homogeneousdomainintersect}. 

Let $T\in(0,+\infty]$, $f\in\mathrm{L}^1_{loc}([0,T),X)$ and $u_0\in X+ \mathrm{D}(\mathring{A})$. We say that $u\,:\,[0,T)\longrightarrow X+ \mathrm{D}(\mathring{A})$ is a \textbf{homogeneous}-mild solution of \eqref{ACP} if
\begin{enumerate}[label=\textit{(\roman*)}]
    \item $u\in\mathrm{C}^0_b([0,T),X+\mathrm{D}(\mathring{A}))$,
    \item $v(t):=u(t)-e^{-tA}u_0 \in X$, for all $t\in[0,T)$,
    \item $v\in \mathrm{C}^0_b([0,T),X)$ is a mild solution of (ACP${}_0$) in the classical sense, \textit{i.e.} for all $t\in[0,T)$,
    \begin{align*}
        \int_{0}^{t} v(s)\,\mathrm{d}s\in \mathrm{D}(A)\text{,}
    \end{align*}
    and
    \begin{align*}
        v(t) + A\int_{0}^{t} v(s)\,\mathrm{d}s = \int_{0}^{t} f(s)\,\mathrm{d}s \text{ in } X \text{.}
    \end{align*}
\end{enumerate}
\end{definition}

\begin{proposition}Let $\omega\in [0,\frac{\pi}{2})$, $(\mathrm{D}(A),A)$ an $\omega$-sectorial operator on a UMD Banach space $X$, such that it satisfies Assumptions \eqref{asmpt:homogeneousdomaindef} and \eqref{asmpt:homogeneousdomainintersect}. Let $T\in(0,+\infty]$, $f\in\mathrm{L}^1_\mathrm{loc}([0,T),X)$ and $u_0\in X+ \mathrm{D}(\mathring{A})$.

The problem \eqref{ACP} admits at most one \textbf{homogeneous}-mild solution.
\end{proposition}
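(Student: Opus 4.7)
The plan is to exploit linearity together with the classical uniqueness of mild solutions inside $X$, relying on nothing more than standard $\mathrm{C}_0$-semigroup theory once the common homogeneous part has been removed.

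Suppose $u_1,u_2$ are two homogeneous-mild solutions of \eqref{ACP} for the same data $(u_0,f)$. Set, for $j=1,2$, $v_j(t):=u_j(t)-e^{-tA}u_0$, which by item \textit{(ii)} of the definition takes values in $X$, and observe the trivial but crucial cancellation
\begin{align*}
    u_1(t)-u_2(t)=v_1(t)-v_2(t) \in X\text{, } \quad t\in[0,T)\text{, }
\end{align*}
so that the question of uniqueness within $X+\mathrm{D}(\mathring{A})$ reduces to a question entirely inside $X$. By item \textit{(iii)}, each $v_j$ belongs to $\mathrm{C}^0_b([0,T),X)$ and satisfies the classical Duhamel identity $v_j(t)+A\int_{0}^{t} v_j(s)\,\mathrm{d}s=\int_{0}^{t} f(s)\,\mathrm{d}s$ in $X$; subtracting, $w:=v_1-v_2\in\mathrm{C}^0_b([0,T),X)$ satisfies
\begin{align*}
    \int_{0}^{t} w(s)\,\mathrm{d}s \in \mathrm{D}(A)\text{, }\quad w(t)+A\int_{0}^{t} w(s)\,\mathrm{d}s=0 \,\text{ in } X \text{, for all } t\in[0,T)\text{.}
\end{align*}

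Now I would invoke the classical uniqueness statement for mild solutions of the abstract Cauchy problem associated with the generator $-A$ of a bounded holomorphic $\mathrm{C}_0$-semigroup on $X$, as recalled in the paper via \cite[Proposition~3.1.16]{ArendtBattyHieberNeubranker2011}: the only continuous function $w:[0,T)\rightarrow X$ satisfying the above relation with zero right-hand side is $w\equiv 0$. This yields $u_1=u_2$ on $[0,T)$ and closes the argument.

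No analytic difficulty is expected here; the only thing to check is that the reduction is legitimate, namely that $e^{-tA}u_0$ cancels cleanly despite lying only in the enlarged space $X+\mathrm{D}(\mathring{A})$. This is precisely ensured by condition \textit{(ii)} of the definition, which forces the genuinely $\mathrm{D}(\mathring{A})$-valued part of $u_j$ to be the same semigroup orbit $e^{-tA}u_0$ for both solutions. In particular, the argument is insensitive to the possible lack of completeness of $\mathrm{D}(\mathring{A})$ and does not require any further structural assumption on $f$ beyond local integrability.
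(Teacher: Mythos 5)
Your argument is correct and follows essentially the same route as the paper: subtract the common orbit $e^{-tA}u_0$, note that the difference $u_1-u_2=v_1-v_2$ lies in $X$ and is a classical mild solution of the problem with zero data, and conclude by the classical uniqueness result \cite[Proposition~3.1.16]{ArendtBattyHieberNeubranker2011}. Nothing further is needed.
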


\begin{proof}Let $u_1$ and $u_2$ be two \textbf{homogeneous}-mild solutions to \eqref{ACP}. Then, we set for all $t\geqslant0$, $V(t):=u_1(t)-u_2(t) = (u_1(t) -e^{-tA}u_0) - (u_2(t) -e^{-tA}u_0)$. It follows that $V$ is a mild solution of (ACP${}_0^0$) in the classical sense. Hence, uniqueness provided by \cite[Proposition~3.1.16]{ArendtBattyHieberNeubranker2011} yields $V=0$ in $X$.
\end{proof}

\begin{remark}Since for $u_0\in X$, one recovers classical mild solutions from the definition, and there is no ambiguity since mild solutions in the classical sense are in particular \textbf{homogeneous}-mild solution. From now on, we will refer without distinction to \textbf{homogeneous}-mild solution and mild solution in the classical sense, as mild solutions.
\end{remark}

\subsection{Preliminary lemmas}

First, we state a Lemma for the problem (ACP${}^0$), about homogeneous fractional Sobolev in-time estimates for initial data $u_0\in \mathring{\eus{D}}_{A}(\theta,q)$, $q\in(1,+\infty)$, $\theta\in(0,1)$.

\begin{lemma}\label{lem:homdataSobest} Let $\omega\in [0,\frac{\pi}{2})$, $(\mathrm{D}(A),A)$ an $\omega$-sectorial operator on a UMD Banach space $X$, such that it satisfies Assumptions \eqref{asmpt:homogeneousdomaindef} and \eqref{asmpt:homogeneousdomainintersect}. Let $q\in(1,+\infty)$, $\alpha\in(-1+1/q,1/q)$.

For all $u_0 \in \mathring{\eus{D}}_{A}(1+\alpha-{1}/{q},q)$, we have
\begin{align*}
    t\mapsto \mathring{A}e^{-tA}u_0 \in \dot{\mathrm{H}}^{\alpha,q}(\mathbb{R}_+,X)\cap\mathrm{L}^q_{1+\alpha}(\mathbb{R}_+,X)
\end{align*}
with estimates
\begin{align*}
    \lVert t\mapsto t^{1-(1+\alpha)}\mathring{A}e^{-t A}u_0\rVert_{{\mathrm{L}}^{q}(\mathbb{R}_+,X)}=\lVert u_0\rVert_{\mathring{\eus{D}}_{A}(1+\alpha-{1}/{q},q)}\text{, }\\
    \lVert \mathring{A}e^{-(\cdot) A}u_0\rVert_{\dot{\mathrm{H}}^{\alpha,q}(\mathbb{R}_+,X)} \lesssim_{q,\alpha,A} \lVert u_0\rVert_{\mathring{\eus{D}}_{A}(1+\alpha-{1}/{q},q)}\text{.} 
\end{align*}
\end{lemma}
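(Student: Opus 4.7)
\begin{proof-wo}[Proof proposal]
The first identity is immediate from unpacking the definitions. Setting $\theta := 1+\alpha - 1/q \in (0,1)$, one has $t^{1-\theta} = t^{-\alpha}$ and $q(1-\theta) - 1 = -\alpha q$, so that
\begin{align*}
    \lVert u_0\rVert_{\mathring{\eus{D}}_A(\theta, q)}^q = \int_0^{+\infty} \bigl(t^{1-\theta} \lVert \mathring{A} e^{-tA} u_0\rVert_X\bigr)^q \frac{\mathrm{d}t}{t} = \int_0^{+\infty} t^{-\alpha q} \lVert \mathring{A} e^{-tA} u_0\rVert_X^q \, \mathrm{d}t \text{,}
\end{align*}
which is exactly $\lVert t\mapsto t^{1-(1+\alpha)} \mathring{A}e^{-tA}u_0\rVert_{\mathrm{L}^q(\mathbb{R}_+, X)}^q$. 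In particular $\mathring{A}e^{-(\cdot)A}u_0 \in \mathrm{L}^q_{1+\alpha}(\mathbb{R}_+, X)$.

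For the second estimate, the plan is to exploit the isomorphism \eqref{eq:isomNeuTimeDeriv}, which reduces the control of the $\dot{\mathrm{H}}^{\alpha,q}$-norm to proving
\begin{align*}
    \bigl\lVert (-\partial_t)^{\alpha}[\mathring{A}e^{-tA}u_0]\bigr\rVert_{\mathrm{L}^q(\mathbb{R}_+, X)} \lesssim_{q,\alpha,A} \lVert u_0\rVert_{\mathring{\eus{D}}_A(1+\alpha - 1/q,q)}\text{.}
\end{align*}
Since $-\partial_t e^{-tA}v = A e^{-tA}v$ for $t>0$ and $v\in X$, fractional powers intertwine according to
\begin{align*}
    (-\partial_t)^{\alpha}\bigl[\mathring{A}e^{-tA}u_0\bigr] = A^{1+\alpha} e^{-tA} u_0\text{,} \quad t>0\text{,}
\end{align*}
which I would justify first on the dense subset $\mathrm{D}(A)\cap \mathrm{R}(A)$ using the Dunford integral representation of $(-\partial_t)^{\alpha}$ combined with the representation formula \eqref{eq:RepFormulaNeumFracMinusTimeDeriv}, and then extend by density in $\mathring{\eus{D}}_A(1+\alpha-1/q,q)$ using the completeness argument from \cite[Proposition~2.12]{DanchinHieberMuchaTolk2020}.

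It then remains to estimate $\lVert A^{1+\alpha} e^{-tA} u_0\rVert_{\mathrm{L}^q(\mathbb{R}_+, X)}$, which I would split into two cases. If $\alpha\in[0,1/q)$, write
\begin{align*}
    A^{1+\alpha} e^{-tA} u_0 = \bigl[A^{\alpha}e^{-(t/2)A}\bigr]\bigl[Ae^{-(t/2)A}u_0\bigr]\text{,}
\end{align*}
and use the standard bound $\lVert A^{\alpha} e^{-sA}\rVert_{X\to X} \lesssim s^{-\alpha}$ from the analytic semigroup functional calculus; taking $\mathrm{L}^q$-norms in $t$ and performing the change of variables $s = t/2$ recovers the right-hand side by the first identity. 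If $\alpha\in(-1+1/q,0)$, use the Balakrishnan-type representation
\begin{align*}
    A^{1+\alpha}e^{-tA}u_0 = A^{\alpha}\bigl[Ae^{-tA}u_0\bigr] = \frac{1}{\Gamma(-\alpha)}\int_{t}^{+\infty} (\tau-t)^{-\alpha-1} A e^{-\tau A} u_0 \, \mathrm{d}\tau\text{,}
\end{align*}
and estimate the resulting one-sided fractional integral by Minkowski's inequality after the scaling $\tau = tu$; the kernel $(u-1)^{-\alpha-1}u^{\alpha-1/q}$ is integrable on $(1,+\infty)$ precisely because the condition $\alpha > -1+1/q$ guarantees integrability at infinity and $\alpha<0$ guarantees integrability near $u=1$, which yields the required bound by $\lVert u_0\rVert_{\mathring{\eus{D}}_A(1+\alpha-1/q,q)}$ via the first identity.

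The main obstacle is the rigorous justification of the intertwining identity $(-\partial_t)^{\alpha}[\mathring{A}e^{-tA}u_0] = A^{1+\alpha}e^{-tA}u_0$ in the context where $\mathrm{D}(\mathring{A})$ is possibly not complete and where $u_0$ may lie outside $\mathrm{D}(A)$. The cleanest way is to reduce to $u_0\in \mathrm{D}(A)\cap \mathrm{R}(A)$, for which everything follows from classical analytic semigroup functional calculus, and then to extend the whole estimate by the density of $\mathrm{D}(A)\cap\mathrm{R}(A)$ in $\mathring{\eus{D}}_A(1+\alpha - 1/q, q)$ provided by the homogeneous interpolation theory of \cite[Chapter~2]{DanchinHieberMuchaTolk2020}.
\end{proof-wo}
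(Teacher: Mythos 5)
Your proposal is correct and follows the same overall strategy as the paper: the weighted $\mathrm{L}^q$ identity by unwinding the definition of $\lVert\cdot\rVert_{\mathring{\eus{D}}_A(1+\alpha-1/q,q)}$, reduction via the isomorphism \eqref{eq:isomNeuTimeDeriv} to an $\mathrm{L}^q$ bound on $(-\partial_t)^{\alpha}[\mathring{A}e^{-(\cdot)A}u_0]$, the intertwining of the time-fractional derivative with fractional powers of $A$, and, for $\alpha\geqslant 0$, the very same splitting $A^{\alpha}e^{-(t/2)A}\,\mathring{A}e^{-(t/2)A}$ with the analyticity bound. The only real divergence is the case $\alpha<0$: the paper represents $(-\partial_t)^{\alpha}$ by the one-sided integral \eqref{eq:RepFormulaNeumFracMinusTimeDeriv}, identifies $(-\partial_\tau)^{1+\alpha}$ of the orbit with $A^{1+\alpha}\mathring{A}e^{-\tau A}u_0$, and closes with analyticity plus Hardy's inequality, whereas you move the fractional power onto $A$ first and expand $A^{\alpha}$ by the Balakrishnan formula, closing with Minkowski's inequality and the scaling $\tau=tu$; the two computations are equivalent, and your kernel estimate is correct --- note only that integrability at infinity of $(u-1)^{-\alpha-1}u^{\alpha-1/q}$ is automatic (the exponent is $-1-1/q$), the constraint $\alpha>-1+1/q$ being what makes $1+\alpha-1/q\in(0,1)$ so that the trace space and the isomorphism \eqref{eq:isomNeuTimeDeriv} are available at all. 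Two small repairs are needed in your write-up: since $e^{-tA}u_0$ need not belong to $X$, the quantity you manipulate should be written $A^{\alpha}[\mathring{A}e^{-tA}u_0]$ rather than $A^{1+\alpha}e^{-tA}u_0$; and for the approximation step you should invoke the density of $\mathrm{D}(A)$ in $\mathring{\eus{D}}_{A}(1+\alpha-1/q,q)$ from \cite[Lemma~2.10]{DanchinHieberMuchaTolk2020} (not Proposition~2.12, which is the interpolation identity, and not $\mathrm{D}(A)\cap\mathrm{R}(A)$, whose density in this possibly non-complete space is not established and is not needed: for $u_0\in\mathrm{D}(A)$ the orbit is classical and the Balakrishnan integral converges absolutely), the passage to the limit being justified by linearity of the estimate, the norm identity of the first part, and completeness of $\dot{\mathrm{H}}^{\alpha,q}(\mathbb{R}_+,X)$. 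The paper avoids this regularization altogether by decomposing $u_0=x_0+a_0\in X\times\mathrm{D}(\mathring{A})$ and using $\mathring{A}e^{-tA}u_0=Ae^{-tA}x_0+e^{-tA}\mathring{A}a_0$, which is marginally more direct, while your route is closer to the classical functional-calculus formalism; both yield the same bound.
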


\begin{proof} We just have to prove the estimate in Sobolev space. The equality of norms is straightforward by definition of the $\mathring{\eus{D}}_{A}(1+\alpha-{1}/{q},q)$-norm.

\textbf{Step 1:} The case $\alpha=0$ is straightforward.

\textbf{Step 2:} The case $\alpha\in(0,1/q)$. For $u_0\in \mathring{\eus{D}}_{A}(1+\alpha-{1}/{q},q)$, one can write $u_0=x_0+a_0$, where $(x_0,a_0)\in X\times \mathrm{D}(\mathring{A})$. By \cite[Proposition~2.6]{DanchinHieberMuchaTolk2020}, the following equality holds in $X$, for all $t > 0$,
\begin{align*}
   \mathring{A} e^{-tA}u_0= Ae^{-tA}x_0 + e^{-tA}\mathring{A}a_0 \text{.}
\end{align*}
Therefore, thanks to the representation formulae \eqref{eq:RepFormulaNeumFracMinusTimeDeriv}, and integral formulations for fractional powers of $A$, we have for all $t>0$,
\begin{align*}
     A^\alpha \mathring{A} e^{-tA}u_0   &=  A^\alpha (Ae^{-tA}x_0 + e^{-tA}\mathring{A}a_0)\\
                                        &= A^{1+\alpha}e^{-tA}x_0 + A^\alpha e^{-tA}\mathring{A}a_0\\
                                        &= \frac{1}{\Gamma(1-\alpha)} \int_{t}^{+\infty} \frac{1}{(\tau-t)^{\alpha}} (A^2e^{-\tau A}x_0 + Ae^{-\tau A}\mathring{A}a_0)\, \mathrm{d}\tau \\
                                        &= (-\partial_t)^{\alpha -1} [ A^2e^{-(\cdot) A}x_0 + Ae^{-(\cdot)A}\mathring{A}a_0 ](t)\\
                                        &= (-\partial_t)^{\alpha -1} [ A \mathring{A} e^{-(\cdot) A}u_0 ](t)\\
                                        &= (-\partial_t)^{\alpha} [\mathring{A} e^{-(\cdot) A}u_0 ](t) \text{.}
\end{align*}
So that, by the isomorphism property \eqref{eq:isomNeuTimeDeriv}, we have
\begin{align*}
    \lVert \mathring{A}e^{-(\cdot) A}u_0\rVert_{\dot{\mathrm{H}}^{\alpha,q}(\mathbb{R}_+,X)}\sim_{\alpha,q} \lVert A^{\alpha}\mathring{A}e^{-(\cdot) A}u_0\rVert_{{\mathrm{L}}^{q}(\mathbb{R}_+,X)}\text{ . }
\end{align*}
From there, we obtain
\begin{align*}
    \lVert A^{\alpha}\mathring{A}e^{-(\cdot) A}u_0\rVert_{{\mathrm{L}}^{q}(\mathbb{R}_+,X)} &= \left(\int_0^{+\infty} (\tau^{\frac{1}{q}}\lVert A^{\alpha}\mathring{A}e^{-\tau A}u_0\rVert_X )^q \frac{\mathrm{d}\tau}{\tau} \right)^{\frac{1}{q}}\\
    &\lesssim_{q,\alpha,A} \left(\int_0^{+\infty} (\tau^{\frac{1}{q}-\alpha}\lVert \mathring{A}e^{-\frac{\tau}{2} A}u_0\rVert_X )^q \frac{\mathrm{d}\tau}{\tau} \right)^{\frac{1}{q}}\\
    &\lesssim_{q,\alpha,A} \lVert u_0\rVert_{\mathring{\eus{D}}_{A}(1+\alpha-{1}/{q},q)} \text{.} 
\end{align*}
Our last set of inequalities follows from the analyticity of the semigroup $(e^{-tA})_{t>0}$ on $X$ and the fact that one can write for all $\tau>0$,
\begin{align}\label{eq:powerssemgrpdecomptover2}
    A^{\alpha}\mathring{A}e^{-\tau A}u_0 = A^{\alpha}e^{-\frac{\tau}{2} A}\mathring{A}e^{-\frac{\tau}{2} A}u_0 \text{ . }
\end{align}
\textbf{Step 3:} The case $\alpha\in(-1+1/q,0)$. We play with the integral representations like \eqref{eq:RepFormulaNeumFracMinusTimeDeriv} and fractional powers of $A$, so that as in \textbf{Step 2}, we should be able to write for $t>0$,
\begin{align*}
     (-\partial_t)^{\alpha}[\mathring{A}e^{-(\cdot) A}u_0](t) &= \int_{t}^{+\infty} (-\partial_\tau)^{\alpha+1}[\mathring{A}e^{-(\cdot) A}u_0](\tau) \,\mathrm{d}\tau \label{eq:}\\ &= \int_{t}^{+\infty} \tau A^{1+\alpha}\mathring{A}e^{-\tau A}u_0\frac{\mathrm{d}\tau}{\tau} \text{ . }
\end{align*}
Notice that the last integral can be understood as an improper Riemann integral, so that it gives a measurable function with values in $X$.

Therefore, we can bound, thanks to the Fatou Lemma and then to the analyticity of the semigroup (we use the same trick \eqref{eq:powerssemgrpdecomptover2}),
\begin{align*}
    \left\lVert t\mapsto\int_{t}^{+\infty} \tau A^{1+\alpha}\mathring{A}e^{-\tau A}u_0\frac{\mathrm{d}\tau}{\tau}\right\rVert_{\mathrm{L}^q(\mathbb{R}_+,X)}^q &\leqslant \liminf_{M\rightarrow +\infty}\int_{0}^{+\infty}\left\lVert \int_{t}^{M} \tau A^{1+\alpha}\mathring{A}e^{-\tau A}u_0\frac{\mathrm{d}\tau}{\tau} \right\rVert_X^q\mathrm{d}{t}\\
    &\leqslant \int_{0}^{+\infty}\left( \int_{t}^{+\infty} \lVert\tau A^{1+\alpha}\mathring{A}e^{-\tau A}u_0\rVert_X\frac{\mathrm{d}\tau}{\tau} \right)^q\mathrm{d}{t}\\
    &\lesssim_{q,\alpha,A} \int_{0}^{+\infty}\left(t^{\frac{1}{q}} \int_{t}^{+\infty} \tau^{-\alpha} \lVert\mathring{A}e^{-\frac{\tau}{2} A}u_0\rVert_X\frac{\mathrm{d}\tau}{\tau} \right)^q\frac{\mathrm{d}{t}}{t} \text{ . }
\end{align*}
Finally, by the mean of Hardy's inequality \cite[Lemma~6.2.6]{bookHaase2006}, we conclude
\begin{align*}
    \left\lVert t\mapsto\int_{t}^{+\infty} \tau A^{1+\alpha}\mathring{A}e^{-\tau A}u_0\frac{\mathrm{d}\tau}{\tau}\right\rVert_{\mathrm{L}^q(\mathbb{R}_+,X)}^q &\lesssim_{q,\alpha,A} \int_{0}^{+\infty}\left(t^{\frac{1}{q}-\alpha} \lVert\mathring{A}e^{-\frac{t}{2} A}u_0\rVert_X \right)^q\frac{\mathrm{d}{t}}{t} \\
    &\lesssim_{q,\alpha,A} \lVert u_0\rVert_{\mathring{\eus{D}}_{A}(1+\alpha-{1}/{q},q)}^q \text{.} 
\end{align*}
\end{proof}

Now, the next lemma ensures that the maximal regularity operator applied to a Sobolev in-time function, even with negative regularity, still yields an actual measurable function with values in $X$.

\begin{lemma}\label{lem:FromHaqToLqloc} Let $\omega\in [0,\frac{\pi}{2})$, $(\mathrm{D}(A),A)$ an $\omega$-sectorial operator on a UMD Banach space $X$, and let $q\in(1,+\infty)$, $\alpha\in[0,1/q)$. 

For $f\in\dot{\mathrm{H}}^{\alpha,q}(\mathbb{R}_+,X)$, the following holds for all $T>0$,
\begin{align*}
    t\mapsto \int_{0}^t e^{-(t-s)A}f(s) \,\mathrm{d}s \in \mathrm{C}^{0}([0,T],X)
\end{align*}
with estimate
\begin{align*}
    \Bigg\lVert t\mapsto \int_{0}^t e^{-(t-s)A}f(s) \,\mathrm{d}s\Bigg\rVert_{\mathrm{L}^\infty([0,T],X)} &\lesssim_{A,\alpha,q} T^{1+\alpha -1/q} \lVert f \rVert_{\dot{\mathrm{H}}^{\alpha,q}(\mathbb{R}_+,X)}\text{.}
\end{align*}
Moreover, if $A$ has the $\mathrm{L}^q$-maximal regularity property, the results still holds for $\alpha\in(-1+1/q,0)$.
\end{lemma}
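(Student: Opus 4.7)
The plan is to treat the two ranges $\alpha \in [0, 1/q)$ and $\alpha \in (-1+1/q, 0)$ separately. The first case is a straightforward Hölder/Sobolev-embedding argument. The second reduces, via the isomorphism \eqref{eq:isomDirTimeDeriv}, to the first case together with the $\mathrm{L}^q$-maximal regularity estimate applied to an auxiliary Cauchy problem with an $\mathrm{L}^q$ forcing.

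For $\alpha \in [0, 1/q)$, Proposition~\ref{prop:PropertiesSobolevSpacesR+}(iii) yields $\dot{\mathrm{H}}^{\alpha,q}(\mathbb{R}_+,X) \hookrightarrow \mathrm{L}^{r}(\mathbb{R}_+,X)$ with $1/r = 1/q - \alpha$. Since $A$ is sectorial of angle $<\pi/2$, $(e^{-tA})_{t \geqslant 0}$ is uniformly bounded on $X$, so Hölder's inequality directly gives
\begin{align*}
    \left\lVert \int_0^t e^{-(t-s)A} f(s)\,\mathrm{d}s \right\rVert_X \lesssim_A t^{1/r'}\lVert f\rVert_{\mathrm{L}^r(\mathbb{R}_+,X)} \lesssim_{A,\alpha,q} T^{1+\alpha-1/q}\lVert f\rVert_{\dot{\mathrm{H}}^{\alpha,q}(\mathbb{R}_+,X)}
\end{align*}
for $t \in [0,T]$, using $1/r' = 1+\alpha-1/q$. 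Continuity in $t$ then follows from the strong continuity of the semigroup together with the density of $\mathrm{C}_c^\infty(\mathbb{R}_+,X)$ granted by Proposition~\ref{prop:PropertiesSobolevSpacesR+}(iv).

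For $\alpha \in (-1+1/q, 0)$, I would set $g := (\partial_t)^{\alpha} f \in \mathrm{L}^q(\mathbb{R}_+,X)$ via \eqref{eq:isomDirTimeDeriv}, so that $\lVert g\rVert_{\mathrm{L}^q} \sim_{A,q,X} \lVert f\rVert_{\dot{\mathrm{H}}^{\alpha,q}}$, and define $v(t) := \int_0^t e^{-(t-s)A} g(s)\,\mathrm{d}s$. The $\mathrm{L}^q$-maximal regularity of $A$ provides $\partial_t v \in \mathrm{L}^q(\mathbb{R}_+,X)$ with $\lVert \partial_t v\rVert_{\mathrm{L}^q} \lesssim_{A,q} \lVert g\rVert_{\mathrm{L}^q}$ and $v(0)=0$. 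The main identity to establish is
\begin{align*}
    \int_0^t e^{-(t-s)A} f(s)\,\mathrm{d}s = (\partial_t)^{-\alpha} v(t) = (\partial_t)^{-(1+\alpha)}\partial_t v(t) = \frac{1}{\Gamma(1+\alpha)}\int_0^t (t-\tau)^\alpha\, \partial_\tau v(\tau)\,\mathrm{d}\tau,
\end{align*}
the rightmost equality being the representation \eqref{eq:RepFormulaDirichletFracTimeDeriv} applied with $\beta = \gamma = 1+\alpha \in (1/q,1)$. Hölder's inequality with exponents $q,q'$, legitimate precisely because $\alpha q' > -1 \Leftrightarrow \alpha > -1+1/q$, then yields
\begin{align*}
    \lVert u(t)\rVert_X \lesssim_{\alpha,q} \left(\int_0^t (t-\tau)^{\alpha q'}\,\mathrm{d}\tau\right)^{1/q'} \lVert \partial_t v\rVert_{\mathrm{L}^q(\mathbb{R}_+,X)} \lesssim_{A,\alpha,q} t^{1+\alpha-1/q}\lVert f\rVert_{\dot{\mathrm{H}}^{\alpha,q}(\mathbb{R}_+,X)},
\end{align*}
using $1/q' + \alpha = 1+\alpha-1/q$. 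Continuity of $u$ then follows from the standard fact that the Riemann--Liouville fractional integral of an $\mathrm{L}^q$ function of order larger than $1/q$ (here, of order $1+\alpha$) produces a continuous function.

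The main obstacle is the identity $u = (\partial_t)^{-\alpha} v$ in Case 2, since the integral $\int_0^t e^{-(t-s)A}f(s)\,\mathrm{d}s$ has no pointwise meaning when $f$ is only a distribution. I would first verify it for $g \in \mathrm{C}_c^\infty(\mathbb{R}_+,X)$, where $f = (\partial_t)^{-\alpha} g$ is smooth enough that both $u$ and $(\partial_t)^{-\alpha} v$ are classical $\mathrm{C}^0$-functions solving the Cauchy problem \eqref{ACP} with zero initial datum (uniqueness from basic $\mathrm{C}_0$-semigroup theory closes it). The general case then follows by density of $\mathrm{C}_c^\infty(\mathbb{R}_+,X)$ in $\mathrm{L}^q(\mathbb{R}_+,X)$ and the continuity of the map $g \mapsto \frac{1}{\Gamma(1+\alpha)}\int_0^{\cdot}(\cdot-\tau)^\alpha\partial_\tau v(\tau)\,\mathrm{d}\tau$ from $\mathrm{L}^q$ to $\mathrm{C}^0([0,T],X)$, itself a consequence of the Hölder bound above.
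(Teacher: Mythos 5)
Your proof is correct and follows essentially the same route as the paper: for $\alpha\in[0,1/q)$ the Sobolev embedding into $\mathrm{L}^r$ plus H\"older and uniform boundedness of the semigroup, and for $\alpha\in(-1+1/q,0)$ the commutation identity $(\partial_t+A)^{-1}f=(\partial_t)^{-(1+\alpha)}\partial_t(\partial_t+A)^{-1}(\partial_t)^{\alpha}f$ combined with the representation formula \eqref{eq:RepFormulaDirichletFracTimeDeriv}, $\mathrm{L}^q$-maximal regularity for the auxiliary datum $(\partial_t)^{\alpha}f$, and H\"older with exponent $q'$ (your integrand $\partial_\tau v$ is exactly the paper's $f^\alpha-Au^\alpha$). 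Your extra density step to give pointwise meaning to the identity is a reasonable way of making explicit what the paper handles implicitly via resolvent commutation, not a different argument.
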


\begin{proof}\textbf{Step 1:} For $\alpha=0$, $q\in(1,+\infty)$. Let $f\in\mathrm{L}^q(\mathbb{R}_+,X)$. By uniform boundedness of the semigroup $(e^{-tA})_{t\geqslant0}$ on $X$ and H\"{o}lder's inequality yield
\begin{align*}
    \Bigg\lVert \int_{0}^{t}{e^{-(t-s)A}}f(s)\,\mathrm{d}s\Bigg\rVert_{X} &\lesssim_{A}  \int_{0}^{t} \lVert f(s) \rVert_{X}\,\mathrm{d}s\\
    &\lesssim_{A} t^{1-1/q} \lVert f \rVert_{\mathrm{L}^q([0,t],\mathbb{R})}
\end{align*}
The supremum on $t\in[0,T]$ yields the estimate
\begin{align*}
    \Bigg\lVert t\mapsto \int_{0}^t e^{-(t-s)A}f(s) \,\mathrm{d}s\Bigg\rVert_{\mathrm{L}^\infty([0,T],X)} &\lesssim_{A} T^{1 -1/q} \lVert f \rVert_{{\mathrm{L}}^{q}(\mathbb{R}_+,X)}\text{.}
\end{align*}

Continuity in-time follows from the dominated convergence theorem.

\textbf{Step 2:} For $\alpha\in(0,1/q)$, $f\in\dot{\mathrm{H}}^{\alpha,q}(\mathbb{R}_+,X)$, for $\frac{1}{r}=\frac{1}{q}-\alpha$, we have $f\in \mathrm{L}^r(\mathbb{R}_+,X)$ by Sobolev embeddings. Therefore by \textbf{Step 1},
\begin{align*}
    \Bigg\lVert t\mapsto \int_{0}^t e^{-(t-s)A}f(s) \,\mathrm{d}s\Bigg\rVert_{\mathrm{L}^\infty([0,T],X)} &\lesssim_{A} T^{1 -1/r} \lVert f \rVert_{{\mathrm{L}}^{r}(\mathbb{R}_+,X)}\\
     &\lesssim_{A,\alpha,q} T^{1+\alpha -1/q} \lVert f \rVert_{\dot{\mathrm{H}}^{\alpha,q}(\mathbb{R}_+,X)}\text{.}
\end{align*}

\textbf{Step 3:} Let $\alpha\in(-1+1/q,0)$, $f\in\dot{\mathrm{H}}^{\alpha,q}(\mathbb{R}_+,X)$ and assume that $A$ has the maximal regularity property. First of all, by commutation properties for resolvents of $\partial_t$ and $A$, one can write
\begin{align*}
    (\partial_t+A)^{-1}f &= (\partial_t)^{-\alpha - 1} \partial_t (\partial_t+A)^{-1}(\partial_t)^{\alpha} f\\
    &= (\partial_t)^{-\alpha - 1} (\partial_t)^{\alpha} f - (\partial_t)^{-\alpha - 1}A (\partial_t+A)^{-1} (\partial_t)^{\alpha} f\text{.}
\end{align*}
So that setting $f^\alpha := (\partial_t)^{\alpha} f$ and $u^\alpha := (\partial_t+A)^{-1}f^\alpha$, by the representation formula \eqref{eq:RepFormulaDirichletFracTimeDeriv}, we end up with the following expression for $t>0$
\begin{align*}
    (\partial_t+A)^{-1}f(t) =\int_{0}^t e^{-(t-s)A}f(s) \,\mathrm{d}s = \frac{1}{\Gamma(1+\alpha)}\int_{0}^{t} \frac{1}{(t-s)^{-\alpha}}[ f^\alpha(s) - Au^\alpha(s)]\, \mathrm{d}s\text{.}
\end{align*}
Young's inequality for the convolution, then the triangle inequality yield
\begin{align*}
    \Bigg\lVert t\mapsto \int_{0}^t e^{-(t-s)A}f(s) \,\mathrm{d}s\Bigg\rVert_{\mathrm{L}^\infty([0,T],X)} &\leqslant \frac{1}{\Gamma(1+\alpha)} \left\lVert t\mapsto t^{\alpha}  \right\rVert_{\mathrm{L}^{q'}([0,T])}\lVert f^{\alpha} -Au^\alpha \rVert_{\mathrm{L}^q(\mathbb{R}_+,X)}\\
    &\leqslant \frac{T^{1+\alpha-1/q}}{\Gamma(1+\alpha)(\alpha q' +1)^\frac{1}{q'}} \left(\lVert f^\alpha \rVert_{{\mathrm{L}}^{q}(\mathbb{R}_+,X)} + \lVert Au^\alpha \rVert_{\mathrm{L}^q(\mathbb{R}_+,X)}\right) \text{.}
\end{align*}
From there, we recall that we have assumed the $\mathrm{L}^q$-maximal regularity property, so that, by the isomorphism property \eqref{eq:isomDirTimeDeriv},
\begin{align*}
    \Bigg\lVert t\mapsto \int_{0}^t e^{-(t-s)A}f(s) \,\mathrm{d}s\Bigg\rVert_{\mathrm{L}^\infty([0,T],X)} &\lesssim_{A,\alpha,q} {T^{1+\alpha-1/q}} \lVert f^\alpha \rVert_{{\mathrm{L}}^{q}(\mathbb{R}_+,X)} \\
    &\lesssim_{A,\alpha,q} {T^{1+\alpha-1/q}} \lVert f \rVert_{\dot{\mathrm{H}}^{\alpha,q}(\mathbb{R}_+,X)} \text{.}
\end{align*}
\end{proof}

\begin{corollary}\label{cor:ContinuityDAthetaq}Let $\omega\in [0,\frac{\pi}{2})$, $(\mathrm{D}(A),A)$ an $\omega$-sectorial operator on a UMD Banach space $X$, and let $q\in(1,+\infty)$, $\alpha\in[0,1/q)$. 

For $f\in\dot{\mathrm{H}}^{\alpha,q}(\mathbb{R}_+,X)$, the following holds for all $T>0$,
\begin{align*}
    t\mapsto \int_{0}^t e^{-(t-s)A}f(s) \,\mathrm{d}s \in \mathrm{C}^{0}([0,T],\eus{D}_{A}(1+\alpha-1/q,q))
\end{align*}
with estimate, for all $T>0$,
\begin{align}
    \Bigg\lVert t\mapsto \int_{0}^t e^{-(t-s)A}f(s) \,\mathrm{d}s\Bigg\rVert_{\mathrm{L}^\infty([0,T],{\eus{D}}_{A}(1+\alpha-1/q,q))} &\lesssim_{A,\alpha,q} (1+ {T^{1+\alpha-1/q}}) \lVert f \rVert_{\dot{\mathrm{H}}^{\alpha,q}(\mathbb{R}_+,X)}\label{eq:estContinuityDAthetaqT}\\
    \Bigg\lVert t\mapsto \int_{0}^t e^{-(t-s)A}f(s) \,\mathrm{d}s\Bigg\rVert_{\mathrm{L}^\infty(\mathbb{R}_+,\mathring{\eus{D}}_{A}(1+\alpha-1/q,q))} &\lesssim_{A,\alpha,q} \lVert f \rVert_{\dot{\mathrm{H}}^{\alpha,q}(\mathbb{R}_+,X)}\text{.}\label{eq:estContinuityDAthetaqinfty}
\end{align}
\end{corollary}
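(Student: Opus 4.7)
The strategy is to split the target norm as $\lVert\cdot\rVert_{\eus{D}_A(\theta,q)}=\lVert\cdot\rVert_X+\lVert\cdot\rVert_{\mathring{\eus{D}}_A(\theta,q)}$ with $\theta:=1+\alpha-1/q\in[1-1/q,1)$. Lemma~\ref{lem:FromHaqToLqloc} already supplies $u\in\mathrm{C}^0([0,T],X)$ together with the control $T^{1+\alpha-1/q}\lVert f\rVert_{\dot{\mathrm{H}}^{\alpha,q}}$, which handles the $X$-component and produces the $T^{1+\alpha-1/q}$ factor in \eqref{eq:estContinuityDAthetaqT}. The core task is therefore the $t$-uniform homogeneous bound
\begin{equation*}
\sup_{t\geqslant 0}\lVert u(t)\rVert_{\mathring{\eus{D}}_A(\theta,q)}\lesssim_{A,\alpha,q}\lVert f\rVert_{\dot{\mathrm{H}}^{\alpha,q}(\mathbb{R}_+,X)}\text{,}
\end{equation*}
which immediately yields \eqref{eq:estContinuityDAthetaqinfty} and gives the unit contribution in \eqref{eq:estContinuityDAthetaqT}.

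To prove the key estimate, I would fix $t>0$ and use the semigroup property to write
\begin{equation*}
A e^{-\tau A}u(t)=\int_0^t A e^{-(t-s+\tau)A}f(s)\,ds\text{,}\quad \tau>0\text{.}
\end{equation*}
The $\mathring{\eus{D}}_A(\theta,q)$-seminorm is then the $\mathrm{L}^q(\mathbb{R}_+,\frac{d\tau}{\tau};X)$-norm of $\tau^{1/q-\alpha}A e^{-\tau A}u(t)$. The weight $\tau^{-\alpha}$ would be absorbed into a fractional time derivative acting on $f$, by means of the isomorphism \eqref{eq:isomNeuTimeDeriv} and the integral representation \eqref{eq:RepFormulaNeumFracMinusTimeDeriv}, in the same spirit as the calculation performed in the proof of Lemma~\ref{lem:homdataSobest}. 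After a Fubini interchange, a dilation in $\tau$, and the analyticity bound $\lVert\sigma A e^{-\sigma A}\rVert\lesssim 1$, the problem reduces to an $\mathrm{L}^q(\mathbb{R}_+,X)$-bound closed by Hardy's inequality \cite[Lemma~6.2.6]{bookHaase2006}.

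Continuity in $\eus{D}_A(1+\alpha-1/q,q)$ then follows by a density argument: by point \textit{(iv)} of Proposition~\ref{prop:PropertiesSobolevSpacesR+}, $\mathrm{C}_c^\infty(\mathbb{R}_+,X)$ is dense in $\dot{\mathrm{H}}^{\alpha,q}(\mathbb{R}_+,X)$, and for smooth compactly supported $f$ the continuity of $t\mapsto u(t)$ into both $X$ and $\mathring{\eus{D}}_A(\theta,q)$ is immediate via dominated convergence on the Duhamel integral and on the defining integral for the $\mathring{\eus{D}}_A(\theta,q)$-seminorm; the general case follows by passing to the limit, thanks to the uniform estimate already proved.

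The main obstacle lies in the key estimate. The crude bound $\lVert A e^{-\sigma A}\rVert\lesssim 1/\sigma$ alone produces a Stieltjes-type convolution whose action is not bounded on the natural $\mathrm{L}^q$-scale, so one genuinely needs to first transfer part of the $\tau^{-\alpha}$ weight onto $f$ (via the representation formula for $(-\partial_t)^{-\alpha}$) before invoking any sectoriality bound, in order to end up with a convolution inequality to which Hardy's lemma applies. This is precisely the maneuver transposed here from \cite[Lemma~2.19]{DanchinHieberMuchaTolk2020}, adapted to the Sobolev-in-time regularity of $f$ rather than the real-interpolation regularity used there.
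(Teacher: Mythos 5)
Your reduction is the right one and matches the paper: Lemma~\ref{lem:FromHaqToLqloc} disposes of the $X$-component and of the factor $T^{1+\alpha-1/q}$ in \eqref{eq:estContinuityDAthetaqT}, so everything hinges on the $t$-uniform bound of the $\mathring{\eus{D}}_{A}(1+\alpha-1/q,q)$-seminorm, i.e.\ on \eqref{eq:estContinuityDAthetaqinfty}; the closing density argument for continuity is also the paper's. The gap is in the key estimate, which you only gesture at, and the specific tools you name point the wrong way. You propose to write $f$ through the \emph{anticausal} calculus, i.e.\ \eqref{eq:isomNeuTimeDeriv} and the representation \eqref{eq:RepFormulaNeumFracMinusTimeDeriv}, ``as in Lemma~\ref{lem:homdataSobest}''. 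That lemma concerns the semigroup orbit $e^{-tA}u_0$, for which $-\partial_t$ acts like $A$ and the forward-in-time integral is natural; the Duhamel term is a \emph{causal} convolution, and the operator that meshes with it is the Dirichlet derivative $(\partial_t)^{\alpha}$, through the commutation $(\partial_t+A)^{-1}=(\partial_t)^{-\alpha}(\partial_t+A)^{-1}(\partial_t)^{\alpha}$, the isomorphism \eqref{eq:isomDirTimeDeriv} and the causal representation \eqref{eq:RepFormulaDirichletFracTimeDeriv}. This is not cosmetic: to ``transfer the weight'' one must trade the $\alpha$ time-derivatives of $f$ against a power $\tau^{-\alpha}$, and this happens only after extracting a fractional power of $A$ by analyticity (the paper splits $A^{2}e^{-(\tau+(u-s))A}=A^{1+\alpha}e^{-\frac{\tau+(u-s)}{2}A}A^{1-\alpha}e^{-\frac{\tau}{2}A}e^{-\frac{u-s}{2}A}$, working with the equivalent second-order characterization of the seminorm from \cite[Lemma~2.15]{DanchinHieberMuchaTolk2020}) and then evaluating exactly
\begin{equation*}
\int_{s}^{t}\frac{\mathrm{d}u}{(t-u)^{1-\alpha}(\tau+(u-s))^{1+\alpha}}=\frac{1}{\alpha}\,\frac{(t-s)^{\alpha}}{(\tau+(t-s))\,\tau^{\alpha}}\text{,}
\end{equation*}
which is what puts the expression in the two-sided weighted Hardy form of \cite[Lemma~6.2.6]{bookHaase2006}. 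If instead you substitute $f=(-\partial_t)^{-\alpha}g$ with the forward kernel $(\sigma-s)^{\alpha-1}$, the same maneuver does not close: after the analyticity split needed to lower the $\tau$-weight from $\tau^{1/q-\alpha}$ to a Hardy-compatible exponent, the pointwise majorant one obtains is already infinite (for instance, for $g$ concentrated near the evaluation time $t$ the $\tau$-integrand behaves like $\tau^{-q}$ near $\tau=0$), because the forward fractional-integration kernel does not couple with the causal semigroup kernel to produce the $\tau^{-\alpha}$ gain. Your first-order bound $\lVert\sigma Ae^{-\sigma A}\rVert\lesssim 1$ and the unspecified ``dilation in $\tau$'' do not substitute for this computation, which is the actual content of the paper's proof (its Step~2; the case $\alpha=0$ is the separate Step~1, a direct Stieltjes-kernel plus Hardy argument in the spirit of \cite[Lemma~2.19]{DanchinHieberMuchaTolk2020}). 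So the plan needs to be rerouted through the causal fractional calculus and the explicit kernel identity above before Hardy's inequality can be invoked.
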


\begin{proof} Thanks to Lemma \ref{lem:FromHaqToLqloc}, it suffices to prove the estimate \eqref{eq:estContinuityDAthetaqinfty}.

\textbf{Step 1:} First we assume $\alpha=0$ and $f\in\mathrm{L}^q(\mathbb{R}_+,X)$, we may extend $f$ to $\mathbb{R}$ by setting $f(t):=0$ for $t<0$. In a similar fashion to what has been done in \cite[Lemma~2.19]{DanchinHieberMuchaTolk2020}, we can bound
\begin{align*}
    \Bigg\lVert \int_{0}^{t}e^{-sA}f(t-s)\,\mathrm{d}s\Bigg\rVert_{\mathring{\eus{D}}_{A}(1-{1}/{q},q)}^q &= \int_{0}^{+\infty}\Bigg( \tau^\frac{1}{q} \Big\lVert A \int_{0}^{t} e^{-(\tau+s)A}f(t-s)\,\mathrm{d}s\Big\rVert_X\Bigg)^q \frac{\mathrm{d}\tau}{\tau}\\
    &\lesssim_{A,q} \int_{0}^{+\infty}\Bigg( \int_{0}^{+\infty} \frac{1}{\tau+s}\lVert f(t-s)\rVert_X\mathrm{d}s\Bigg)^q {\mathrm{d}\tau}\\
    &\lesssim_{A,q} \int_{0}^{+\infty}\Bigg( \tau^{\frac{1}{q}-1} \int_{0}^{\tau} \lVert f(t-s)\rVert_X\mathrm{d}s\Bigg)^q \frac{\mathrm{d}\tau}{\tau}\\
    &\quad + \int_{0}^{+\infty} \Bigg( \tau^{\frac{1}{q}}\int_{\tau}^{+\infty} \frac{1}{s}\lVert f(t-s)\rVert_X\mathrm{d}s \Bigg)^{q} \frac{\mathrm{d}\tau}{\tau} \text{.}
\end{align*}
We can apply Hardy's inequalities, see \cite[Lemma~6.2.6]{bookHaase2006}, to obtain
\begin{align*}
    \Bigg\lVert \int_{0}^{t}e^{-sA}f(t-s)\,\mathrm{d}s\Bigg\rVert_{\mathring{\eus{D}}_{A}(1-{1}/{q},q)}^q \lesssim_{A,q} \int_{0}^{+\infty}( \tau^{\frac{1}{q}} \lVert f(t-\tau)\rVert_X)^q \frac{\mathrm{d}\tau}{\tau}
    \lesssim_{A,q} \lVert f\rVert_{\mathrm{L}^q(\mathbb{R}_+,X)}^q\text{. }
\end{align*}

\textbf{Step 2:} For $\alpha\in(0,1/q)$, $f\in \dot{\mathrm{H}}^{\alpha,q}(\mathbb{R}_+,X)$, by Sobolev embeddings, we have $f\in \mathrm{L}^r(\mathbb{R}_+,X)$, $r= \frac{q}{1-\alpha q}$, so that by \textbf{Step 1}, for all $T>0$ :
\begin{align*}
    t\mapsto \int_{0}^{t}e^{-sA}f(t-s)\,\mathrm{d}s \in \mathrm{C}^0([0,T],{\eus{D}}_{A}(1-{1}/{r},r))\text{ . }
\end{align*}
So that it is well defined. Let $t>0$, by \cite[Lemma~2.15]{DanchinHieberMuchaTolk2020}, we have
\begin{align*}
    \Bigg\lVert \int_{0}^{t}e^{-sA}f(t-s)\,\mathrm{d}s\Bigg\rVert_{\mathring{\eus{D}}_{A}(1+\alpha-{1}/{q},q)}^q \sim_{\alpha,q} \underbrace{\int_{0}^{+\infty}\Bigg( \tau^{1+\frac{1}{q}-\alpha} \Big\lVert A^2e^{-\tau A} \int_{0}^{t} e^{-(t-s)A}f(s)\,\mathrm{d}s\Big\rVert_X\Bigg)^q \frac{\mathrm{d}\tau}{\tau}}_{(I)} \text{ . }
\end{align*}
Since $\partial_t$ and $A$ have commuting resolvents, we have 
\begin{align*}
    (\partial_t+A)^{-1} = (\partial_t)^{-\alpha}(\partial_t+A)^{-1}(\partial_t)^{\alpha}\text{.}
\end{align*}
Therefore, setting $f^{\alpha}:=(\partial_t)^{\alpha}f \in\mathrm{L}^q(\mathbb{R},X)$ (up to consider, again, the extension of $f^{\alpha}$ (not $f$) to the whole line by $0$), we can use the representation formula \eqref{eq:RepFormulaDirichletFracTimeDeriv}, to obtain
\begin{align*}
    (I) \sim_{\alpha,q} \int_{0}^{+\infty}\Bigg( \tau^{1+\frac{1}{q}-\alpha} \Big\lVert \int_{0}^{t} \frac{1}{(t-u)^{1-\alpha}}\int_{0}^{u} A^2e^{-(\tau+(u-s))A}f^{\alpha}(s)\,\mathrm{d}s\,\mathrm{d}u\Big\rVert_X\Bigg)^q \frac{\mathrm{d}\tau}{\tau} \text{ . }
\end{align*}
From there, we can use the triangle inequality and we can write, provided $ 0 \leqslant s \leqslant u \leqslant t$,
\begin{align*}
    A^2e^{-(\tau+(u-s))A}=A^{1+\alpha}e^{-\frac{(\tau+(u-s))}{2}A}A^{1-\alpha}e^{-\frac{\tau}{2}A} e^{-\frac{(u-s)}{2}A} \text{,}
\end{align*}
so that, by analyticity of the semigroup $(e^{-tA})_{t\geqslant 0}$ on $X$, and the Fubini-Tonelli theorem, we have
\begin{align*}
    (I) \lesssim_{\alpha,q,A} \int_{0}^{+\infty}\Bigg( \tau^{\frac{1}{q}}  \int_{0}^{t}\int_{0}^{u} \frac{1}{(t-u)^{1-\alpha}}\frac{1}{(\tau+(u-s))^{1+\alpha}}\lVert f^{\alpha}(s)\rVert_X\mathrm{d}s\,\mathrm{d}u\Bigg)^q \frac{\mathrm{d}\tau}{\tau} \text{ . }
\end{align*}
Again by Fubini-Tonelli, and since $\int_{s}^{t}\frac{1}{(t-u)^{1-\alpha}}\frac{1}{(\tau+(u-s))^{1+\alpha}} \mathrm{d}u = \frac{1}{\alpha}\frac{(t-s)^{\alpha}}{(\tau+(t-s))\tau^{\alpha}}$, it follows that
\begin{align*}
    (I) \lesssim_{\alpha,q,A} \int_{0}^{+\infty}\Bigg( \tau^{\frac{1}{q}-\alpha}  \int_{0}^{t}\frac{s^{\alpha}}{(\tau+s)}\lVert f^{\alpha}(t-s)\rVert_X\mathrm{d}s\Bigg)^q \frac{\mathrm{d}\tau}{\tau} \text{ . }
\end{align*}
We can reproduce the use of Hardy's inequalities \cite[Lemma~6.2.6]{bookHaase2006} as in \textbf{Step 1}, to obtain
\begin{align*}
    \int_{0}^{+\infty}\Bigg( \tau^{\frac{1}{q}-\alpha}  \int_{0}^{+\infty}\frac{s^{\alpha}}{(\tau+s)}\lVert f^{\alpha}(t-s)\rVert_X\mathrm{d}s\Bigg)^q \frac{\mathrm{d}\tau}{\tau} \lesssim_{\alpha,q} \lVert f^\alpha\rVert_{\mathrm{L}^q(\mathbb{R}_+,X)}^q \lesssim_{\alpha,q}  \lVert f\rVert_{\dot{\mathrm{H}}^{\alpha,q}(\mathbb{R}_+,X)}^q \text{ .}
\end{align*}
One may also prove the continuity in-time by a density argument and the estimate \eqref{eq:estContinuityDAthetaqT}.
\end{proof}

\subsection{The main result}

\begin{theorem}\label{thm:LqMaxRegUMDHomogeneous}Let $\omega\in [0,\frac{\pi}{2})$, $(\mathrm{D}(A),A)$ an $\omega$-sectorial operator on a UMD Banach space $X$, such that it satisfies Assumptions \eqref{asmpt:homogeneousdomaindef} and \eqref{asmpt:homogeneousdomainintersect}. Let $q\in(1,+\infty)$, $\alpha\in(-1+1/q,1/q)$ and assume that one of the two following conditions is satisfied
\begin{enumerate}
    \item $\alpha\geqslant 0$ and $A$ has the $\mathrm{L}^q$-maximal regularity property,
    \item $\alpha < 0$ and $A$ has BIP on $X$ of type $\theta_A<\frac{\pi}{2}$.
\end{enumerate}

 Let $T\in(0,+\infty]$. For $f\in\dot{\mathrm{H}}^{\alpha,q}((0,T),X)$, $u_0\in \mathring{\eus{D}}_{A}(1+\alpha-{1}/{q},q)$, the problem \eqref{ACP} admits a unique mild solution $u\in \mathrm{C}^0_b([0,T],\mathring{\eus{D}}_{A}(1+\alpha-{1}/{q},q))$ such that $\partial_t u$, $Au \in \dot{\mathrm{H}}^{\alpha,q}((0,T),X)$ with estimate
\begin{align}\label{eq:BoundLqMaxReghomogeneous}
    \lVert u \rVert_{\mathrm{L}^\infty([0,T],\mathring{\eus{D}}_{A}(1+\alpha-{1}/{q},q))} \lesssim_{A,q,\alpha} \lVert (\partial_t u, Au)\rVert_{\dot{\mathrm{H}}^{\alpha,q}((0,T),X)} \lesssim_{A,q,\alpha} \lVert f\rVert_{\dot{\mathrm{H}}^{\alpha,q}((0,T),X)} + \lVert u_0\rVert_{\mathring{\eus{D}}_{A}(1+\alpha-{1}/{q},q)}\text{. }
\end{align}
Moreover, if $A$ admits BIP on $X$ of type $\theta_A<\frac{\pi}{2}$, for $f\in \dot{\mathrm{H}}^{\alpha,q}((0,T),X)$, $u_0\in {\eus{D}}_{A}(1+\alpha-{1}/{q},q)$ and all $\beta\in[0,1]$,
\begin{align}\label{eq:mixedDerivEstimate}
    \lVert (-\partial_t)^{1-\beta} A^{\beta} u\rVert_{\dot{\mathrm{H}}^{\alpha,q}((0,T),X)} \lesssim_{A,q,\alpha} \lVert f\rVert_{\dot{\mathrm{H}}^{\alpha,q}((0,T),X)} + \lVert u_0\rVert_{\mathring{\eus{D}}_{A}(1+\alpha-{1}/{q},q)} \text{. }
\end{align}
\end{theorem}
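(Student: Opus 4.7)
My plan starts by reducing to $T = +\infty$: given $f \in \dot{\mathrm{H}}^{\alpha,q}((0,T),X)$, I extend it by zero outside $(0,T)$ to obtain an element of $\dot{\mathrm{H}}^{\alpha,q}(\mathbb{R}_+,X)$ with equivalent norm, thanks to point~\textit{(ii)} of Proposition~\ref{prop:PropertiesSobolevSpacesR+}. By linearity I then decompose the candidate solution as $u = u^{(0)} + v$, where $u^{(0)}$ is the homogeneous-mild solution of $(\mathrm{ACP}^0)$ and $v$ is the homogeneous-mild solution of $(\mathrm{ACP}_0)$. Uniqueness of the full solution is guaranteed by the preceding uniqueness proposition applied independently to both pieces.

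\textbf{The initial-data problem.} For $u^{(0)}(t) := e^{-tA}u_0$, defined through the extended semigroup \eqref{eq:DefExtendedsemigroup}, the identity $\partial_t u^{(0)}(t) = -\mathring{A} e^{-tA} u_0 = -A u^{(0)}(t)$ combined with Lemma~\ref{lem:homdataSobest} directly yields $\partial_t u^{(0)}, A u^{(0)} \in \dot{\mathrm{H}}^{\alpha,q}(\mathbb{R}_+, X)$ with the required bound in terms of $\lVert u_0 \rVert_{\mathring{\eus{D}}_A(1+\alpha-1/q,q)}$. The uniform-in-time continuity $u^{(0)} \in \mathrm{C}^0_b(\mathbb{R}_+, \mathring{\eus{D}}_A(1+\alpha-1/q,q))$ then follows from the strong continuity of the extended semigroup, together with the interpolation identity $\mathring{\eus{D}}_A(\theta,q) = (X, \mathrm{D}(\mathring{A}))_{\theta,q}$ and a density argument on $\mathrm{D}(\mathring{A})$.

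\textbf{The forcing problem.} For $v$, the key idea is to use the isomorphism \eqref{eq:isomDirTimeDeriv} to convert the $\dot{\mathrm{H}}^{\alpha,q}$-estimate into an $\mathrm{L}^q$-one. Set $\tilde{f} := (\partial_t)^{\alpha} f \in \mathrm{L}^q(\mathbb{R}_+, X)$ and let $\tilde{v} := (\partial_t + A)^{-1}\tilde{f}$. Under the stated hypotheses (plain $\mathrm{L}^q$-maximal regularity when $\alpha \geq 0$; Dore--Venni via BIP when $\alpha < 0$), I get $\partial_t \tilde{v}, A\tilde{v} \in \mathrm{L}^q(\mathbb{R}_+,X)$ with norm $\lesssim \lVert \tilde{f} \rVert_{\mathrm{L}^q} \sim \lVert f\rVert_{\dot{\mathrm{H}}^{\alpha,q}}$. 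Setting $v := (\partial_t)^{-\alpha}\tilde{v}$, the commutation of the functional calculi of $\partial_t$ and $A$ (in particular with $(\partial_t + A)^{-1}$) gives $(\partial_t)^{\alpha}\partial_t v = \partial_t \tilde{v}$ and $(\partial_t)^{\alpha} A v = A\tilde{v}$, so the isomorphism \eqref{eq:isomDirTimeDeriv} yields $\partial_t v, A v \in \dot{\mathrm{H}}^{\alpha,q}(\mathbb{R}_+,X)$ with the right bound. The continuity $v \in \mathrm{C}^0_b(\mathbb{R}_+, \mathring{\eus{D}}_A(1+\alpha-1/q,q))$ is precisely Corollary~\ref{cor:ContinuityDAthetaq} when $\alpha \geq 0$, and extends to $\alpha < 0$ by approximating $f$ in $\dot{\mathrm{H}}^{\alpha,q}(\mathbb{R}_+,X)$ by functions in $\eus{S}_0(\mathbb{R},X)_{|_{\mathbb{R}_+}}$ (Proposition~\ref{prop:PropertiesSobolevSpacesR+}~\textit{(i)}) and passing to the limit with the uniform estimate \eqref{eq:estContinuityDAthetaqinfty}.

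\textbf{Mixed derivative estimate and main obstacles.} For \eqref{eq:mixedDerivEstimate}, I invoke the joint BIP framework: $-\partial_t$ has BIP on $\mathrm{L}^q(\mathbb{R}_+,X)$ with type $\pi/2$ while $A$ has BIP of type $\theta_A < \pi/2$, so the sum condition $\theta_A + \pi/2 < \pi$ is satisfied and \cite[Proposition~2.4]{Pruss2002} yields $\lVert A^{\beta}(-\partial_t)^{1-\beta} w\rVert_{\mathrm{L}^q} \lesssim \lVert A w\rVert_{\mathrm{L}^q} + \lVert \partial_t w\rVert_{\mathrm{L}^q}$ on the appropriate joint domain. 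Applying this to the lifted solution $\tilde{v} + e^{-\cdot A}u_0$, then transferring back via $(\partial_t)^{-\alpha}$ and commutation, yields the claimed estimate. The principal technical obstacle of the whole argument is the uniform-in-time trace control into the possibly non-complete space $\mathring{\eus{D}}_A(1+\alpha-1/q,q)$, which has been absorbed into the preparatory Corollary~\ref{cor:ContinuityDAthetaq} and whose proof rests on Hardy-type inequalities in the spirit of Danchin--Hieber--Mucha--Tolksdorf; a secondary delicate point, for $\alpha < 0$, is that the commutation of $(\partial_t)^{\alpha}$ with $(\partial_t + A)^{-1}$ is only formal, and its rigorous justification in the distributional sense rests precisely on the stronger BIP hypothesis, explaining why $\mathrm{L}^q$-maximal regularity alone is insufficient in that range.
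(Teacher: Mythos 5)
Your outline matches the paper's strategy for the right-hand inequality (commute $(\partial_t)^{\alpha}$ with $(\partial_t+A)^{-1}$, use the isomorphism \eqref{eq:isomDirTimeDeriv} and $\mathrm{L}^q$-maximal regularity, treat the initial datum with Lemma~\ref{lem:homdataSobest}), and for the trace bound when $\alpha\geqslant 0$ via Corollary~\ref{cor:ContinuityDAthetaq}. But your treatment of the trace/continuity for $\alpha<0$ has a genuine gap. Corollary~\ref{cor:ContinuityDAthetaq} and the estimate \eqref{eq:estContinuityDAthetaqinfty} are only available for $\alpha\in[0,1/q)$: their right-hand side is the $\dot{\mathrm{H}}^{\alpha,q}$-norm with \emph{nonnegative} order. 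If you approximate $f$ by $f_n\in\eus{S}_0(\mathbb{R},X)_{|_{\mathbb{R}_+}}$ converging in $\dot{\mathrm{H}}^{\alpha,q}(\mathbb{R}_+,X)$ with $\alpha<0$, the corollary only controls $v_n=(\partial_t+A)^{-1}f_n$ by norms of $f_n$ of order $\geqslant 0$; to pass to the limit in $\mathrm{L}^\infty(\mathbb{R}_+,\mathring{\eus{D}}_{A}(1+\alpha-1/q,q))$ you would need the difference bound $\lVert v_n-v_m\rVert_{\mathrm{L}^\infty(\mathring{\eus{D}}_{A}(1+\alpha-1/q,q))}\lesssim\lVert f_n-f_m\rVert_{\dot{\mathrm{H}}^{\alpha,q}}$ with the \emph{negative} exponent, which is precisely the statement to be proved and is not "uniform" content of the corollary. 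This is where the real work for $\alpha<0$ lies: the paper proves the bound directly, writing $e^{-\tau A}v(t)=v(t+\tau)-v_t(\tau)$ with $v_t=(\partial_t+A)^{-1}[f(\cdot+t)]$, using analyticity to reduce $\lVert v(t)\rVert_{\mathring{\eus{D}}_{A}(1+\alpha-1/q,q)}$ to $\lVert A^{1+\alpha}v(\cdot+t)\rVert_{\mathrm{L}^q}+\lVert A^{1+\alpha}v_t\rVert_{\mathrm{L}^q}$, and then invoking the mixed-derivative estimate with $\beta=1+\alpha$ plus translation invariance. This is also where BIP genuinely enters (fractional powers $A^{1+\alpha}$ are needed), not, as you claim, to justify the commutation of $(\partial_t)^{\alpha}$ with $(\partial_t+A)^{-1}$.

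A second omission: you never establish the left-most inequality of \eqref{eq:BoundLqMaxReghomogeneous}. Your argument yields $\lVert u\rVert_{\mathrm{L}^\infty(\mathring{\eus{D}}_{A}(1+\alpha-1/q,q))}\lesssim\lVert f\rVert_{\dot{\mathrm{H}}^{\alpha,q}}+\lVert u_0\rVert_{\mathring{\eus{D}}_{A}(1+\alpha-1/q,q)}$, whereas the theorem asserts control by $\lVert(\partial_t u,Au)\rVert_{\dot{\mathrm{H}}^{\alpha,q}}$ alone; since $f=\partial_t u+Au$, this amounts to bounding $\lVert u_0\rVert_{\mathring{\eus{D}}_{A}(1+\alpha-1/q,q)}$ by $\lVert(\partial_t u,Au)\rVert_{\dot{\mathrm{H}}^{\alpha,q}}$, which requires extra work: for $\alpha\geqslant0$ the paper writes $e^{-tA}u_0=u(t)-\int_0^t e^{-sA}f(t-s)\,\mathrm{d}s$ and uses the Hardy--Sobolev inequality (Lemma~\ref{lem:HardySobolevInequality}) together with the $u_0=0$ estimate; for $\alpha<0$ it approximates $u_0$ by $u_{0,n}\in\mathrm{D}(A)$ and uses the identity $(-\partial_t)^{\alpha}[Au_n-Av](\tau)=A^{1+\alpha}e^{-\tau A}u_{0,n}$ with the isomorphism \eqref{eq:isomNeuTimeDeriv}. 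Finally, for \eqref{eq:mixedDerivEstimate} with $u_0\neq0$, applying Pr\"uss's result to the "lifted solution" does not cover the semigroup part $e^{-(\cdot)A}u_0$, which is not of the form $(\partial_t+A)^{-1}g$ and does not vanish at $t=0$ (hence the $(-\partial_t)^{1-\beta}$ rather than $(\partial_t)^{1-\beta}$); the paper handles it by a three-lines argument applied to the holomorphic family $(-\partial_t)^{1-z}A^{z}[e^{-tA}(\cdot)]$, using BIP of both $A$ and $-\partial_t$.
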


\begin{remark} $\bullet$ In Theorem \ref{thm:LqMaxRegUMDHomogeneous}, assumptions \eqref{asmpt:homogeneousdomaindef} and \eqref{asmpt:homogeneousdomainintersect} are assumed here in order to ensure that $\mathring{\eus{D}}_{A}(\theta,q)$ is a well defined, even if not complete, normed vector space.

$\bullet$ If $u_0=0$, the estimate \eqref{eq:mixedDerivEstimate} remains valid if we replace the operator $(-\partial_t)^{1-\beta}$ by $(\partial_t)^{1-\beta}$.

$\bullet$ If one asks instead the initial data $u_0$ to be in the smaller, but complete, space ${\eus{D}}_{A}(\theta,q)$ then one can drop assumptions \eqref{asmpt:homogeneousdomaindef} and \eqref{asmpt:homogeneousdomainintersect}, and the estimate \eqref{eq:BoundLqMaxReghomogeneous} still holds. However, one loose the possibility to compute the corresponding equivalent norm by the mean of real interpolation.

$\bullet$ The assumption \textit{(ii)} is probably not necessary for the case $\alpha<0$. However, it is not clear in this case how to prove the left hand side of the estimate \eqref{eq:BoundLqMaxReghomogeneous}. Indeed, our approach require to consider the action of $A^{1+\alpha}$, see \textbf{Step 3} in the proof.
\end{remark}

\begin{proof}[{of Theorem \ref{thm:LqMaxRegUMDHomogeneous}}]Let $q\in(1,+\infty)$, $\alpha\in(-1+1/q,1,q)$. Throughout this proof, and without loss of generality, we assume $T=+\infty$.

\textbf{Step 1:} The $\dot{\mathrm{H}}^{\alpha,q}$-maximal regularity estimate, for $u_0=0$, $f\in\dot{\mathrm{H}}^{\alpha,q}(\mathbb{R}_+,X)$. Mixed derivatives estimates.

We recall that $A$ has the $\mathrm{L}^q$-maximal regularity property.

Now, we use the fact $\partial_t$ and $A$ have their resolvent that commutes with each other, we have 
\begin{align*}
    (\partial_t+A)^{-1} = (\partial_t)^{-\alpha}(\partial_t+A)^{-1}(\partial_t)^{\alpha}\text{.}
\end{align*}
This equality and the isomorphism property of $(\partial_t)^{\alpha}$ \eqref{eq:isomDirTimeDeriv} yield
\begin{align*}
    \lVert (\partial_t u, Au)\rVert_{\dot{\mathrm{H}}^{\alpha,q}(\mathbb{R}_+,X)} \lesssim_{A,q,\alpha} \lVert f\rVert_{\dot{\mathrm{H}}^{\alpha,q}(\mathbb{R}_+,X)}\text{ . }
\end{align*}

And for the same reasons, from the $\mathrm{L}^q$-setting, if $A$ has BIP on $X$ of type $\theta_A<\frac{\pi}{2}$, by \cite[Proposition~2.4]{Pruss2002} for all $\beta\in[0,1]$, we have
\begin{align}\label{eq:mixedDerivEstimateDir}
    \lVert (\partial_t)^{1-\beta} A^{\beta} u\rVert_{\dot{\mathrm{H}}^{\alpha,q}(\mathbb{R}_+,X)} \lesssim_{A,q,\alpha} \lVert f\rVert_{\dot{\mathrm{H}}^{\alpha,q}(\mathbb{R}_+,X)}\text{. }
\end{align}
This estimate will be useful later.

Concerning the estimate \eqref{eq:mixedDerivEstimate} (with $u_0\in{\eus{D}}_{A}(1+\alpha-1/q,q)$), it suffices to assume that the right hand side of \eqref{eq:BoundLqMaxReghomogeneous} holds. Indeed, in this case it suffices to apply manually the three lines lemma, see e.g. the proof of \cite[Theorem~2.7]{bookLunardiInterpTheory}, to the holomorphic families (of operators) $(e^{(z-\beta)^2}(-\partial_t)^{1-z}A^{z}(\partial_{t}+A)^{-1})_{0\leqslant \Re (z) \leqslant 1}$ and $(e^{(z-\beta)^2}(-\partial_t)^{1-z}A^{z}[e^{-tA}(\cdot)])_{0\leqslant \Re (z) \leqslant 1}$, provided $\beta \in(0,1)$ is fixed. The proof of the boundedness is then carried over by \eqref{eq:BoundLqMaxReghomogeneous} and BIP of $A$ and of $-\partial_t$ respectively. Details are left to the reader.

\textbf{Step 2:} The trace estimate when $\alpha\in[0,1/q)$. Let $u_0\in \mathring{\eus{D}}_{A}(1+\alpha-{1}/{q},q)$, $f\in\dot{\mathrm{H}}^{\alpha,q}(\mathbb{R}_+,X)$.
The solution $u$ must be given for all $t>0$, by
\begin{align*}
    u(t) = e^{-t A}u_0 + \int_{0}^{t}e^{-sA}f(t-s)\,\mathrm{d}s \text{. }
\end{align*}
Corollary \ref{cor:ContinuityDAthetaq} tells us that
\begin{align*}
    u\in \mathrm{C}^0_b(\mathbb{R}_+,\mathring{\eus{D}}_{A}(1+\alpha-{1}/{q},q)))\text{,}
\end{align*}
with the estimate
\begin{align*}
    \lVert u \rVert_{\mathrm{L}^\infty(\mathbb{R}_+,\mathring{\eus{D}}_{A}(1+\alpha-{1}/{q},q))} \lesssim_{A,q,\alpha} \lVert f\rVert_{\dot{\mathrm{H}}^{\alpha,q}(\mathbb{R}_+,X)} + \lVert u_0\rVert_{\mathring{\eus{D}}_{A}(1+\alpha-{1}/{q},q)} \text{.}
\end{align*}
Since $f=\partial_t u+ Au$ and $e^{-tA} u_0 = u(t)- \int_{0}^{t}e^{-sA}f(t-s)\,\mathrm{d}s$, for all $t>0$, the triangle inequality leads to
\begin{align*}
    \lVert u \rVert_{\mathrm{L}^\infty(\mathbb{R}_+,\mathring{\eus{D}}_{A}(1+\alpha-{1}/{q},q))} \lesssim_{A,q,\alpha}& \lVert (\partial_t u, Au)\rVert_{\dot{\mathrm{H}}^{\alpha,q}(\mathbb{R}_+,X)} + \left(\int_{0}^{+\infty}\Bigg( \tau^{\frac{1}{q}-\alpha} \lVert Au(\tau)\rVert_X\Bigg)^q \frac{\mathrm{d}\tau}{\tau}\right)^{\frac{1}{q}}\\
    &\quad +  \left(\int_{0}^{+\infty}\Bigg( \tau^{\frac{1}{q}-\alpha} \left\lVert A \int_{0}^{\tau} e^{-sA}f(\tau-s)\,\mathrm{d}s  \right\rVert_X\Bigg)^q \frac{\mathrm{d}\tau}{\tau}\right)^{\frac{1}{q}} \text{ . }
\end{align*}
Thus, by the Hardy-Sobolev inequality, Lemma \ref{lem:HardySobolevInequality}, we obtain
\begin{align*}
    \lVert u \rVert_{\mathrm{L}^\infty(\mathbb{R}_+,\mathring{\eus{D}}_{A}(1+\alpha-{1}/{q},q))} \lesssim_{A,q,\alpha}& \lVert (\partial_t u, Au)\rVert_{\dot{\mathrm{H}}^{\alpha,q}(\mathbb{R}_+,X)} + \left\lVert \tau\mapsto  A\int_{0}^{\tau}e^{-(\tau-s)A}f(s)\,\mathrm{d}s\right\rVert_{\dot{\mathrm{H}}^{\alpha,q}(\mathbb{R}_+,X)}\text{.}
\end{align*}
Now, we may apply \textbf{Step 1} on the last term, by the triangle inequality, since, again, $f=\partial_t u + Au$, we deduce
\begin{align*}
    \lVert u \rVert_{\mathrm{L}^\infty(\mathbb{R}_+,\mathring{\eus{D}}_{A}(1+\alpha-{1}/{q},q))} \lesssim_{A,q,\alpha}& \lVert (\partial_t u, Au)\rVert_{\dot{\mathrm{H}}^{\alpha,q}(\mathbb{R}_+,X)} + \lVert f \rVert_{\dot{\mathrm{H}}^{\alpha,q}(\mathbb{R}_+,X)}\\
    \lesssim_{A,q,\alpha}& \lVert (\partial_t u, Au)\rVert_{\dot{\mathrm{H}}^{\alpha,q}(\mathbb{R}_+,X)}\text{.}
\end{align*}

\textbf{Step 3:} The trace estimate when $\alpha\in(-1+1/q,0)$. Let $f\in \dot{\mathrm{H}}^{\alpha,q}(\mathbb{R}_+,X)$.

By Lemma \ref{lem:FromHaqToLqloc}, we have
\begin{align*}
    t\mapsto \int_{0}^{t}e^{-(t-s)A}f(s)\,\mathrm{d}s \in \mathrm{C}^0(\mathbb{R}_+,X)\text{ .}
\end{align*}
However, for $t,\tau>0$,
\begin{align*}
    e^{-\tau A}\int_{0}^{t}e^{-(t-s)A}f(s)\,\mathrm{d}s = \int_{0}^{t+\tau}e^{-(\tau + t-s)A}f(s)\,\mathrm{d}s - \int_{0}^{\tau}e^{-(\tau-s)A}f(s+t)\,\mathrm{d}s\text{.}
\end{align*}
So that if we set $v=(\partial_t+A)^{-1}f$, $v_t=(\partial_t+A)^{-1}[f(\cdot+t)]$, we obtain for $t,\tau>0$
\begin{align*}
    e^{-\tau A}v(t) = v(t+\tau)-v_t(\tau) \text{.}
\end{align*}
Therefore, by analyticity of the semigroup $(e^{-\tau A})_{\tau>0}$, and the triangle inequality, we obtain for $t>0$,
\begin{align*}
    \lVert v(t) \rVert_{\mathring{\eus{D}}_{A}(1+\alpha-{1}/{q},q)} \lesssim_{\alpha,q,A}\lVert A^{1+\alpha} v(\cdot + t) \rVert_{\mathrm{L}^q(\mathbb{R}_+,X)} + \lVert A^{1+\alpha} v_t \rVert_{\mathrm{L}^q(\mathbb{R}_+,X)}\text{.}
\end{align*}
We can now apply \eqref{eq:mixedDerivEstimateDir} with $\beta = 1+\alpha$, and use the translation invariance of Sobolev norms, yielding
\begin{align*}
    \lVert v(t) \rVert_{\mathring{\eus{D}}_{A}(1+\alpha-{1}/{q},q)} \lesssim_{\alpha,q,A}\lVert f \rVert_{\dot{\mathrm{H}}^{\alpha,q}(\mathbb{R}_+,X)}\text{.}
\end{align*}
Now, for $u= e^{-(\cdot)A}u_{0}+v$, provided $u_0\in \mathring{\eus{D}}_{A}(1+\alpha-{1}/{q},q)$, we deduce
\begin{align*}
    \lVert u \rVert_{\mathrm{L}^\infty(\mathbb{R}_+,\mathring{\eus{D}}_{A}(1+\alpha-{1}/{q},q))} \lesssim_{\alpha,q,A}\lVert f \rVert_{\dot{\mathrm{H}}^{\alpha,q}(\mathbb{R}_+,X)} + \lVert u_0 \rVert_{\mathring{\eus{D}}_{A}(1+\alpha-{1}/{q},q)}\text{.}
\end{align*}
Again, to obtain the left hand side of \eqref{eq:BoundLqMaxReghomogeneous}, as in the previous \textbf{Step 2}, it suffices to estimate $u_0$ in $\mathring{\eus{D}}_{A}(1+\alpha-{1}/{q},q)$-norm. However, such estimate may involve the action of fractional powers of $A$ on $e^{-\tau A}u_0$, for which the meaning is not clear when $u_0\in \mathring{\eus{D}}_{A}(\theta,q)$. To circumvent this issue, we use the fact that $\mathrm{D}(A)$ is dense in $\mathring{\eus{D}}_{A}(1+\alpha-{1}/{q},q)$  by \cite[Lemma~2.10]{DanchinHieberMuchaTolk2020}. Thus, let $(u_{0,n})_{n\in\mathbb{N}}$ be a sequence in $\mathrm{D}(A)$ which converges to $u_0$ in $\mathring{\eus{D}}_{A}(1+\alpha-{1}/{q},q)$.
We set for all $n\in\mathbb{N}$, $u_n:= e^{-(\cdot)A}u_{0,n}+v$.

By analyticity of the semigroup $(e^{-\tau A})_{\tau >0}$, and by the identity
\begin{align*}
    (-\partial_{t})^{\alpha}[Au_n -Av](\tau) = (-\partial_{t})^{\alpha}[Ae^{-(\cdot) A}u_{0,n}](\tau) = A^{1+\alpha}e^{-\tau A}u_{0,n} \text{,}
\end{align*}
we are able to deduce that
\begin{align*}
    \lVert u_{0,n} \rVert_{\mathring{\eus{D}}_{A}(1+\alpha-{1}/{q},q)} &= \left(\int_{0}^{+\infty} \left(\tau^{\frac{1}{q}-\alpha}\lVert A e^{-\tau A} u_{0,n}\rVert_X\right)^q \frac{\mathrm{d}\tau}{\tau}\right)^{\frac{1}{q}}\\
    &\lesssim_{\alpha,q,A} \left(\int_{0}^{+\infty} \lVert A^{1+\alpha} e^{-\tau A} u_{0,n}\rVert_X^q {\mathrm{d}\tau}\right)^{\frac{1}{q}}\\
    &\lesssim_{\alpha,q,A} \left(\int_{0}^{+\infty} \lVert (-\partial_{t})^{\alpha}[Au_n -Av](\tau)\rVert_X^q {\mathrm{d}\tau}\right)^{\frac{1}{q}} \text{.}
\end{align*}
Finally, we can use the isomorphism property \eqref{eq:isomNeuTimeDeriv} and the triangle inequality to obtain
\begin{align*}
    \lVert u_{0,n} \rVert_{\mathring{\eus{D}}_{A}(1+\alpha-{1}/{q},q)} &\lesssim_{\alpha,q,A} \lVert A u_n \rVert_{\dot{\mathrm{H}}^{\alpha,q}(\mathbb{R}_+,X)} + \lVert A v \rVert_{\dot{\mathrm{H}}^{\alpha,q}(\mathbb{R}_+,X)}\\
    &\lesssim_{\alpha,q,A} \lVert A u_n \rVert_{\dot{\mathrm{H}}^{\alpha,q}(\mathbb{R}_+,X)} + \lVert f \rVert_{\dot{\mathrm{H}}^{\alpha,q}(\mathbb{R}_+,X)}\\
    &\lesssim_{\alpha,q,A}\lVert (\partial_t u_n, A u_n) \rVert_{\dot{\mathrm{H}}^{\alpha,q}(\mathbb{R}_+,X)}\text{ . }
\end{align*}
The proof ends here since one can pass to the limit as $n$ goes to infinity.

It remains to prove the continuity in time with values in $\mathring{\eus{D}}_{A}(1+\alpha-{1}/{q},q)$ which follows from a density argument\footnote{No need of completeness here, since the involved limits are already constructed.}.
\end{proof}

\typeout{}                                
\bibliographystyle{alpha}
{\footnotesize
\bibliography{Biblio}}

\end{document}